\theoremstyle{plain}
\newtheorem{theorem}{Theorem}[section]
\newtheorem{lemma}[theorem]{Lemma}
\newtheorem{proposition}[theorem]{Proposition}
\newtheorem{corollary}[theorem]{Corollary}
\newtheorem{example}[theorem]{Example}
\theoremstyle{definition}
\newtheorem{definition}[theorem]{Definition}
\newtheorem{remark}[theorem]{Remark}
\newcommand{\alg}[1]{\mathbf{#1}}	
\newcommand{\var}[1]{\mathcal{#1}}
\newcommand{\func}[1]{\mathsf{#1}}
\newcommand{\lucas}{\text{\bf\L}}
\newcommand{\PL}{\alg{P \lucas}}
\newcommand{\Stone}{\mathsf{Stone}}
\newcommand{{\MV}}{\mathsf{MV}}
\newcommand{{\DL}}{\mathsf{DL}}
\newcommand{{\BA}}{\mathsf{BA}}
\newcommand{{\LM}}{\mathsf{LM}}
\newcommand{{\PMV}}{\mathsf{PMV}}
\newcommand{{\Priest}}{\mathsf{Priest}}
\newcommand{\variety}[1]{\mathbb{H}\mathbb{S}\mathbb{P}(#1)}
\newcommand{\qvariety}[1]{\mathbb{I}\mathbb{S}\mathbb{P}(#1)}
\newcommand{\tqvariety}[1]{\mathbb{I}\mathbb{S}_c\mathbb{P}^+(#1)}
\newcommand{\Pow}{\mathfrak{P}}
\newcommand{\Skel}{\mathfrak{S}}
\newcommand{\pr}{\mathsf{pr}}
\newcommand{\id}{\mathsf{id}}
\newcommand{\U}{\mathsf{U}}
\begin{document}

\title[Natural dualities for varieties gen. by finite positive MV-chains]{Natural dualities for varieties generated by finite positive MV-chains}

\author[W. Poiger]{Wolfgang Poiger}
\address{University of Luxembourg \\ 6 Avenue de la Fonte \\ L-4364 Esch-sur-Alzette \\ Luxembourg}	
\email{wolfgang.poiger@uni.lu}

\subjclass{08C20, 06D35, 06D50}
\keywords{Positive MV-algebras, Natural dualities, Finite-valued \L ukasiewicz logics, Priestley duality, Boolean power}

\begin{abstract}
We provide a simple natural duality for the varieties generated by the negation- and implication- free reduct of a finite MV-chain. We study these varieties through the dual equivalence thus obtained. For example, we fully characterize their algebraically closed, existentially closed and injective members. We also explore the relationship between this natural duality and Priestley duality in terms of distributive skeletons and Priestley powers.                  
\end{abstract}
\maketitle
\section{Introduction}\label{sec:Introduction}
Just like distributive lattices are the negation-free subreducts of Boolean algebras, \emph{positive $\MV$-algebras} (recently introduced in \cite{Abbadini2022}),  are the negation-free subreducts of $\MV$-algebras. While the variety $\MV$ of $\MV$-algebras provides algebraic semantics for \L ukasiewicz infinite-valued logic (see, \emph{e.g.}, \cite[Chapter 4]{Cignoli2000}), its subvarieties $\MV_n = \variety{\lucas_n}$, generated by finite $\MV$-chains $\lucas_n$, provide algebraic semantics for \L ukasiewicz finitely-valued logics (the subvarieties $\MV_n$ were first studied in \cite{Grigolia1977}). In this paper, we study the varieties $\PMV_n = \variety{\PL_n}$ generated by finite \emph{positive} $\MV$-chains $\PL_n$ (that is, negation-free reducts of $\lucas_n$). Our main tool for this study is the theory of \emph{natural dualities}. 

In its simplest form, natural duality theory \cite{ClarkDavey1998} provides a general framework to obtain a dual equivalence between a quasi-variety $\var{A} = \qvariety{\alg{M}}$ generated by a finite algebra $\alg{M}$ and a category of structured Stone spaces $\var{X} = \tqvariety{\utilde{\alg{M}}}$ generated by a discrete structure $\utilde{\alg{M}}$ based on the same set as $\alg{M}$ (and therefore called an \emph{alter ego} of $\alg{M}$). Examples of natural dualities are \emph{Stone duality}, which arises if $\alg{M}$ is the two-element Boolean algebra and its alter ego $\utilde{\alg{M}}$ is the two-element discrete space, and \emph{Priestley duality}, which arises if $\alg{M}$ is the two-element distributive lattice and its alter-ego $\utilde{\alg{M}}$ is the two-element discrete space with order $0 \leq 1$. The great utility of these dualities may be seen as a consequence of the fact that the alter-egos described above are very simple structures. A general theme of natural duality theory may be phrased as \emph{simple structures yield useful dualities}.       

For the varieties $\MV_n$, natural dualities were developed and studied in \cite{Niederkorn2001}. Since finite $\MV$-chains are \emph{semi-primal}, it is easy to come up with simple alter-egos $\utilde{\lucas}_n$. Indeed, by \cite[Theorem 3.3.14]{ClarkDavey1998} the only structure relevant for this duality is given by the collection of subalgebras $\mathbb{S}(\lucas_n)$. One reason why this is sufficient is that every subalgebra of $\lucas_n \times \lucas_n$ is a direct product of subalgebras. This is \emph{not} true anymore in the case of $\PL_n$ (for example, the order relation $\leq$ itself is a subalgebra of $\PL_n \times \PL_n$). We show that, instead, a simple alter-ego of $\PL_n$ can be obtained from a certain collection of subalgebras of the order $\leq$ which can be easily computed algorithmically. 

To make the case for the utility of these alter-egos, we investigate the dualities they yield to derive various results about the varieties $\PMV_n$. For example, we completely characterize the injective, algebraically closed and existentially closed members of $\PMV_n$. We also explore the relationship to Priestley duality, which can be expressed in terms of \emph{distributive skeletons} and \emph{Priestley powers}. We show that these constructions give rise to an adjunction between $\DL$ (the variety of distributive lattices) and $\PMV_n$, similar to the adjunction \cite[Section 4]{KurzPoigerTeheux2023} between $\BA$ (the variety of Boolean algebras) and $\MV_n$ obtained from functors taking the Boolean skeleton and the Boolean power, respectively. We expect this to prove useful in future applications, exploring modal logic over $\PMV_n$ as an analogue of Dunn's \emph{positive modal logic} \cite{Dunn1995} in the setting of modal finitely-valued \L ukasiewicz logic \cite{HansoulTeheux2013}.

The paper is structured as follows. In Section~\ref{sec:Preliminaries}, we recall the most important background information on $\MV$- and $\MV_n$-algebras (Subsection~\ref{subsec:MV-Algebras}) and on natural dualities (Subsection~\ref{subsec:NaturalDualities}). In Section~\ref{sec:NatDualposMv}, we begin our study of finite positive $\MV$-chains $\PL_n$ and the varieties $\PMV_n$ they generate (Subsection~\ref{subsec:PositiveMVChains}). We proceed to develop the natural dualities for these varieties (Subsection~\ref{subsec:NaturalDualitiesPosMV}). In Section~\ref{sec:FurtherExplorations}, we explore some ramifications of this duality. More specifically, we give an explicit axiomatization of the category dual to $\PMV_2$ generated by the three-element chain (Subsection~\ref{subsec:ThreeElementChain}), we explore the relationship between $\PMV_n$ and $\DL$ as described above (Subsection~\ref{subsec:RelationshipPriestley}) and we characterize the algebraically and existentially closed algebras in $\PMV_n$ via this relationship (Subsection~\ref{subsec:A-E-ClosedAlgebras}). In the concluding Section~\ref{sec:conclusion}, we collect some open questions and ideas for further research.  
\section{Preliminaries}\label{sec:Preliminaries}
In this section, we give short overviews of the two most important topics related to this paper. In Subsection~\ref{subsec:MV-Algebras}, we recall the basics of $\MV$-algebras, with a focus on finite $\MV$-chains and the varieties $\MV_n$ they generate. In Subsection~\ref{subsec:NaturalDualities}, we recall important prerequisites from the theory of natural dualities. For further information on these topics, the reader may consult the textbooks \cite{Cignoli2000} about $\MV$-algebras and \cite{ClarkDavey1998} about natural dualities (in particular, we will often refer to the latter throughout this entire paper).     
\subsection{MV-algebras}\label{subsec:MV-Algebras}
It is well-known that Boolean algebras provide an appropriate algebraic counterpart to classical propositional logic. Similarly, to \L ukasiewicz infinitely-valued logic, an appropriate algebraic counterpart is provided by \emph{$\MV$-algebras}, introduced by Chang \cite{Chang1958} in 1958. The variety $\MV$ of $\MV$-algebras is generated by the \emph{standard $\MV$-algebra}
\[
\langle [0,1], \odot, \oplus, \wedge, \vee, \neg, 0, 1 \rangle
\] 
based on the real unit interval with its usual bounded lattice structure and additional operations 
\[
 x\odot y = \mathsf{max}\{0, x + y -1\}, \text{ } x\oplus y = \mathsf{min}\{1, x + y \}, \text{ } \neg x = 1 - x.
\]        
For a detailed overview of $\MV$-algebras and their relationship to many-valued logic, we refer the reader to \cite{Cignoli2000} (and \cite{Mundici2011} for more advanced topics). In this paper, we focus on the finite subalgebras of the standard $\MV$-algebra, which are all of the following form.
\begin{definition}\label{def:FiniteMVChains}
Let $n\geq 1$ be a natural number. The \emph{$(n+1)$-element $\MV$-chain} is given by 
\[
\lucas_n = \langle \{ 0, \tfrac{1}{n},\dots \tfrac{n-1}{n}, 1 \}, \wedge , \vee, \odot, \oplus, \neg, 0, 1\rangle, 
\] 
considered as a subalgebra of the standard $\MV$-algebra. We use $\MV_n$ to denote the variety $\variety{\lucas_n}$ generated by $\lucas_n$ (these varieties were first axiomatized by Grigolia in \cite{Grigolia1977}).  
\end{definition}
Note that $\lucas_1$ is simply the two-element Boolean algebra and, therefore, $\MV_1$ is the variety of Boolean algebras, the algebraic counterpart to classical propositional logic. The varieties $\MV_n$ with $n \geq 2$ provide appropriate algebraic counterparts to \L ukasiewicz \emph{finitely-valued logics}. In  particular, \L ukasiewicz three-valued logic with $\lucas_2$ as algebra of truth-degrees is a popular research topic in non-classical logic.  
             
It was shown in \cite[Proposition 2.1]{Niederkorn2001} that every finite $\MV$-chain $\lucas_n$ is \emph{semi-primal} \cite{FosterPixley1964a}, meaning that every operation $f\colon \{ 0,\frac{1}{n},\dots, \frac{n-1}{n}, 1\}^k \to \{ 0,\frac{1}{n},\dots, \frac{n-1}{n}, 1\}$ ($k\geq 1$) which preserves subalgebras of $\lucas_n$ (that is, $f(\alg{S}^k) \subseteq \alg{S}$ for all subalgebras $\alg{S} \subseteq \lucas_n$) is term-definable in $\lucas_n$ (note that this is a straightforward generalization of the two-element Boolean algebra $\lucas_1$ being \emph{primal}, meaning that \emph{every} operation $f\colon \{ 0,1 \}^k \to \{0,1\}$ can be expressed by a Boolean term). Some important consequences of this are that the variety $\MV_n$ coincides with the quasi-variety $\qvariety{\lucas_n}$ generated by $\lucas_n$, and that every $\MV_n$-algebra is a \emph{Boolean product} (see, \emph{e.g.}, \cite[Chapter IV]{BurrisSankappanavar1981}) of subalgebras of $\lucas_n$. 
      
It is well-known that the subalgebras of $\lucas_n$ are exactly given by 
\[
\lucas_k \cong \langle \{ 0, \tfrac{\ell}{n}, \dots, \tfrac{(k-1)\ell}{n}, 1\}, \wedge , \vee, \odot, \oplus, \neg, 0, 1\rangle,
\]
where $n = k\cdot\ell$. Therefore, the lattice $\mathbb{S}(\lucas_n)$ of subalgebras of $\lucas_n$ is isomorphic to the bounded lattice of divisors of $n$. 

It is also well-known (and another immediate consequence of $\lucas_n$ being semi-primal) that, for every $d\in \lucas_n$, the unary operation 
\[
\tau_d(x) = 
\begin{cases}
1 & \text{ if } d \leq x, \\
0 & \text { otherwise } 
\end{cases}  
\]  
is term-definable in $\lucas_n$. This means that, as shown in \cite{Iorgulescu1998}, $\MV_n$ can be identified with a (proper, for $n \geq 5$) subvariety of the variety $\LM_n$ of $n$-valued \emph{\L ukasiewicz-Moisil algebras}, generated by the $n$-element chain with $\neg$ defined as for $\lucas_n$ and all $\tau_d$ as fundamental operations (see \cite{Boicescu1991} for an overview of \L ukasiewicz-Moisil algebras). 

The unary terms $\tau_d$ can be very useful, for example, they are important in the algebraic study of modal extensions of \L ukasiewicz finitely-valued logic \cite{Bou2011, HansoulTeheux2013}. Notably, as shown in \cite[pp. 344--345]{Ostermann1988}, only the operations $\odot$ and $\oplus$ are required to define these unary terms in $\lucas_n$. Thus, they will still be available in our study of finite positive $\MV$-chains (also see Lemma~\ref{lem:taus_definable}).                   
\subsection{Natural dualities}\label{subsec:NaturalDualities} The theory of natural dualities provides a common framework
to develop dual equivalences between quasi-varieties of algebras and structured Stone spaces. In particular, the theory encompasses and generalizes Stone duality for Boolean algebras and Priestley duality for distributive lattices. In this subsection, we give a selective overview of this theory. For more information, we refer the reader to the book \cite{ClarkDavey1998}, which we often cite throughout this paper.

Let $\alg{M}$ be a finite algebra (with underlying set $M$) and let $\var{A} = \qvariety{\alg{M}}$ be the quasi-variety it generates. An \emph{alter ego} of $\alg{M}$ is a discrete topological structure (also with underlying set $M$) of the form 
\[
\utilde{\alg{M}} = \langle M, \mathcal{G}, \mathcal{H}, \mathcal{R}, \mathcal{T}_{\mathrm{dis}} \rangle,
\]    
where $\mathcal{G}$ is a collection of (total) homomorphisms $\alg{M}^n \to \alg{M}$ (possibly nullary, which corresponds to \emph{constants}), $\mathcal{H}$ is a collection of \emph{partial homomorphisms}, that is, homomorphisms from a subalgebra of $\alg{M}^n$ to $\alg{M}$ and $\mathcal{R}$ is a collection of \emph{algebraic relations}, that is, subalgebras $\alg{R}\subseteq \alg{M}^n$. Lastly, $\mathcal{T}_{\mathrm{dis}}$ is the discrete topology on $M$. 

The \emph{topological quasi-variety} $\var{X} = \tqvariety{\utilde{\alg{M}}}$ generated by $\utilde{\alg{M}}$ consists of structured Stone spaces (recall that a \emph{Stone space} is a topological space $(X,\mathcal{T})$ which is compact, Hausdorff and totally disconnected)
\[
\alg{X} = \langle X, \mathcal{G}^\alg{X}, \mathcal{H}^\alg{X}, \mathcal{R}^\alg{X}, \mathcal{T} \rangle,
\]
of the same type as $\utilde{\alg{M}}$ which are isomorphic to a closed substructure of a non-empty product of $\utilde{\alg{M}}$. The category $\mathcal{X}$ with structure-preserving continuous maps as morphisms is often described using the Preservation Theorem~\cite[Theorem 1.4.3]{ClarkDavey1998} and the Separation Theorem~\cite[Theorem 1.4.3]{ClarkDavey1998}. 

By the Preduality Theorem~\cite[Theorem 1.5.2]{ClarkDavey1998}, there exists a dual adjunction between $\var{A}$ and $\var{X}$ given by the contravariant hom-functors $\func{D}\colon \var{A} \to \var{X}$ and $\func{E}\colon \var{X} \to \var{A}$ defined by 
\[
\func{D}(\alg{A}) = \var{A}(\alg{A}, \alg{M}) \text{ and } \func{E}(\alg{X}) = \var{X}(\alg{X}, \utilde{\alg{M}}) 
\] 
for all $\alg{A} \in \var{A}$ and $\alg{X}\in \var{X}$. The natural transformations $e\colon 1_\var{A} \to \func{ED}$ and $\varepsilon\colon 1_{\var{X}} \to \func{DE}$ corresponding to this adjunction are given by evaluations
\begin{align*}
e_\alg{A}(a)(u)=u(a) \qquad & \text{for all } \alg{A}\in \var{A}, u \in \func{D}(\alg{A}) \text{ and } a \in A,\\
\varepsilon_\alg{X}(x)(\alpha)=\alpha(x) \qquad & \text{for all } \alg{X}\in \var{X}, \alpha \in \func{E}(\alg{X}) \text{ and } x \in X.
\end{align*}
If $e$ is a natural isomorphism, we say that $\utilde{\alg{M}}$ \emph{yields a duality for $\var{A}$} (this is also known as \emph{dual representation}). If $\varepsilon$ is a natural isomorphism as well, we say that $\utilde{\alg{M}}$ yields a \emph{full} duality for $\var{A}$ (meaning that $\func{D}$ and $\func{E}$ establish a \emph{dual equivalence}). In fact, in this paper we exclusively deal with \emph{strong dualities} \cite[Chapter 3]{ClarkDavey1998}, which are full dualities with the additional property that $\utilde{\alg{M}}$ is injective in $\var{X}$. 

In particular, for lattice-based algebras, strong dualities can often be obtained via the NU Strong Duality Corollary \cite[Corollary 3.3.9]{ClarkDavey1998}.
\begin{corollary}\label{cor:NUStrongDuality} \cite{ClarkDavey1998}
Let $\alg{M}$ have a majority term, and let all subalgebras of $\alg{M}$ be subdirectly irreducible. Then 
\[
\utilde{\alg{M}} =  \langle M, K, P_1, \mathbb{S}(\alg{M}\times \alg{M}), \mathcal{T}_{\mathrm{dis}} \rangle,
\]
yields a strong duality on $\var{A}$, where
$K$ is the union of trivial (\emph{i.e.}, one-element) subalgebras of $\alg{M}$, the set $P_1$ consists of all unary partial homomorphisms $\alg{M} \to \alg{M}$ and $\mathbb{S}(\alg{M}\times \alg{M})$ consists of all binary algebraic operations. 
\end{corollary}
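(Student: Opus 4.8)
The plan is to treat the statement as the majority-term instance ($n=2$) of the general near-unanimity (NU) results of \cite{ClarkDavey1998}, separating the claim into a duality part and a strength part. For the duality I would invoke the NU Duality Theorem \cite[Theorem 2.3.4]{ClarkDavey1998}: a finite algebra carrying an $(n+1)$-ary near-unanimity term is dualized by the alter ego whose structure consists of all algebraic relations of arity at most $n$. A majority term is exactly a ternary near-unanimity term, so $n=2$ and the binary algebraic relations $\mathbb{S}(\alg{M}\times\alg{M})$ already make the unit $e$ a natural isomorphism. The unary relations permitted by the theorem are entailed by the binary ones (through the diagonal subalgebras), and adjoining the constants $K$ and the unary partial operations $P_1$ only enriches the alter ego, so it continues to yield a duality.

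For strength I would use that a duality is full and strong precisely when the alter ego $\utilde{\alg{M}}$ is injective in $\var{X}$, and establish this injectivity from the NU Strong Duality Theorem \cite[Theorem 3.3.8]{ClarkDavey1998}. In its brute-force form this theorem makes the alter ego strongly dualizing once it carries \emph{all} algebraic operations, partial operations and relations of arity at most $n=2$; concretely, one extends a morphism defined on a closed substructure of a power of $\utilde{\alg{M}}$ to the whole power, using the near-unanimity term to handle one coordinate at a time. This already produces a strong duality, but for a structure still carrying binary partial and total operations, so the real substance of the corollary is to cut these down to $K$, $P_1$ and $\mathbb{S}(\alg{M}\times\alg{M})$ without losing strength.

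This reduction is the step I expect to be the main obstacle, and it is exactly where the hypothesis that every subalgebra of $\alg{M}$ is subdirectly irreducible is used. The majority term renders $\var{A}$ congruence-distributive, and together with subdirect irreducibility of the subalgebras this constrains the binary relations in $\mathbb{S}(\alg{M}\times\alg{M})$ enough to show that every binary partial homomorphism is entailed by the unary partial homomorphisms in $P_1$ together with the binary relations, while the nullary operations are absorbed by $K$ and the higher total operations are subsumed as a special case. The delicate point is that this elimination must be carried out \emph{strength-preservingly}, that is, keeping $\utilde{\alg{M}}$ injective in $\var{X}$ rather than merely preserving the bare duality; it is precisely this subdirect-irreducibility-licensed strength-preserving entailment that converts the brute-force strong duality into the economical alter ego displayed in the statement.
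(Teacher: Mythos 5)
First, a structural point: the paper does not prove this statement at all --- it is imported verbatim from Clark--Davey as \cite[Corollary 3.3.9]{ClarkDavey1998}, and the citation is the paper's entire ``proof.'' So your attempt can only be judged against the textbook's own derivation. Measured that way, your architecture is essentially the right one: the duality part comes from the NU Duality Theorem in the ternary (majority) case, the strength part comes from the NU strong-duality machinery, and the hypothesis that all subalgebras of $\alg{M}$ are subdirectly irreducible enters precisely in the strength-preserving reduction of the alter ego down to $K$, $P_1$ and $\mathbb{S}(\alg{M}\times\alg{M})$. (Whether that reduction is packaged inside the NU Strong Duality Theorem or deferred to the corollary is a presentational difference from the book, not a mathematical one.) You also correctly flag that entailment and strong entailment are different currencies.

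The genuine gap is that the step you yourself call ``the main obstacle'' is asserted rather than proved: you never say \emph{why} congruence distributivity plus subdirect irreducibility forces every binary algebraic partial homomorphism to be strongly entailed by $K \cup P_1 \cup \mathbb{S}(\alg{M}\times\alg{M})$. The actual mechanism, which is the mathematical heart of the corollary, is a factorization argument. Let $h\colon \alg{D}\to\alg{M}$ be a partial homomorphism with domain $\alg{D}\le \alg{M}^2$, so that $\alg{D}$ is a subdirect product of its projections $\alg{D}_1,\alg{D}_2\le\alg{M}$. The image of $h$ is a subalgebra of $\alg{M}$, hence subdirectly irreducible by hypothesis, so $\ker h$ is (completely) meet-irreducible in $\mathrm{Con}(\alg{D})$. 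The majority term makes $\mathrm{Con}(\alg{D})$ distributive, and in a distributive lattice meet-irreducible elements are meet-prime; since the projection kernels satisfy $\eta_1\wedge\eta_2 = \Delta_{\alg{D}} \le \ker h$, one of them lies below $\ker h$. Hence $h = g\circ(\pi_i{\mid}_{\alg{D}})$ for some $i$ and some unary partial homomorphism $g$ defined on $\pi_i(\alg{D})$, and such a composite --- a member of $P_1$ composed with a projection and restricted to the algebraic domain $\alg{D}\in\mathbb{S}(\alg{M}\times\alg{M})$ --- is one of the admissible constructs for \emph{strong} entailment (total binary operations are the full-domain special case). Without this argument, or at least a pinpoint citation of the lemma in \cite{ClarkDavey1998} that performs it, your outline reduces to restating the corollary: the claim that the hypotheses ``constrain the binary relations enough'' is exactly what had to be shown.
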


While this corollary narrows down the structure needed to obtain a strong duality, this $\utilde{\alg{M}}$ is usually still more complicated than it necessarily has to be. This is where (strong) \emph{entailment} comes into play. We say that another alter ego $\utilde{\alg{M}}' = \langle M, \mathcal{G}',\mathcal{H}',\mathcal{R}', \mathcal{T}_{\mathrm{dis}} \rangle$ \emph{strongly entails} $\utilde{\alg{M}}$ if whenever $\utilde{\alg{M}}$ yields a strong duality on $\var{A}$, the same is true for $\utilde{\alg{M}'}$. Similarly, we say that members of $\mathcal{G}' \cup \mathcal{H}' \cup \mathcal{R}'$ strongly entail members of $\mathcal{G} \cup \mathcal{H} \cup \mathcal{R}$. In the following, we give a list of admissible constructs for strong entailment relevant for this paper (see \cite[Chapter 9]{ClarkDavey1998} for a complete list of admissible constructs for entailment). 
\begin{enumerate}
\item Any set of relations strongly entails the full product $\alg{M}^2$, the diagonal $\Delta_\alg{M} = \{ (m,m) \mid m\in \alg{M} \}$ of $\alg{M}$ and the identity $\id_\alg{M}$ on $\alg{M}$.
\item Any binary relation $\alg{R}$ strongly entails its converse $\alg{R}^{-1} = \{ (b,a) \mid (a,b\in\alg{R})\}$ and $\pi_1(\alg{R} \cap \Delta_\alg{M})$.
\item Relations $\alg{S},\alg{R} \subseteq \alg{M}^n$ strongly entail their intersection $\alg{S} \cap \alg{R}$. 
\item Arbitrary relations $\alg{S}$ and $\alg{R}$ entail their product $\alg{S} \times \alg{R}$.  
\item $\utilde{\alg{M}}'$ strongly entails $\utilde{\alg{M}}$ if it is obtained from $\utilde{\alg{M}}$ by deleting a partial operation $h\in \mathcal{H}$ which has an extension in $\mathcal{G}$ and adding its domain to $\mathcal{R}$.   
\end{enumerate} 

We say that $\utilde{\alg{M}}$ yields an \emph{optimal} strong duality if $\mathcal{G} \cup \mathcal{H} \cup \mathcal{R}$ is not strongly entailed by any of its proper subsets.  

We illustrate the concepts introduced in this subsection by explaining how to obtain natural dualities for $\MV_n$ (these dualities have been explored in \cite{Niederkorn2001}). This example is a specific instance of the proof of the Semi-primal Strong Duality Theorem \cite[Theorem 3.3.14]{ClarkDavey1998}.

\begin{example}\label{exam:SemiPrimalStrongDuality}
Let $n\geq 1$. The discrete structure
\[
\utilde{\lucas}_n = \langle \{ 0, \tfrac{1}{n},\dots \tfrac{n-1}{n}, 1 \}, \mathbb{S}(\lucas_n), \mathcal{T}_{\mathrm{dis}} \rangle, 
\]
where members of $\mathbb{S}(\lucas_n)$ are understood as unary relations, yields a strong duality on $\MV_n$. 
\end{example}
\begin{proof} 
By Corollary~\ref{cor:NUStrongDuality}, the structure
\[
\langle \{ 0, \tfrac{1}{n},\dots \tfrac{n-1}{n}, 1 \}, K, P_1, \mathbb{S}(\lucas_n\times \lucas_n), \mathcal{T}_{\mathrm{dis}} \rangle
\]
yields a strong duality on $\MV_n$ (where $K$ is the union of one-element subalgebras and $P_1$ is the collection of all unary partial homomorphisms). Since $\lucas_n$ is based on a bounded lattice, it has no one-element subalgebras, therefore $K = \varnothing$. Furthermore, the only homomorphism $\lucas_k\to \lucas_n$ defined on a subalgebra $\lucas_k \subseteq \lucas_n$ is the natural embedding of $\lucas_k$. Using the strong entailment constructs (1) and (5) above, it can be replaced by its domain $\lucas_k \in \mathbb{S}(\lucas_n)$. Every subalgebra $\alg{R} \in \mathbb{S}(\lucas_n \times \lucas_n)$ is  simply a product of subalgebras of $\lucas_n$. Therefore, by (4) above, they are strongly entailed by $\mathbb{S}(\lucas_n)$ as well.            
\end{proof}
It follows from \cite[Theorem 9.2.6]{ClarkDavey1998} that modifying the structure from Example~\ref{exam:SemiPrimalStrongDuality} to only include the meet-irreducible members of the lattice $\mathbb{S}(\lucas_n)$ yields an \emph{optimal} strong duality (also see \cite[Theorem 8.3.2]{ClarkDavey1998}). 

In the next section, we aim to come up with a similarly simple natural duality for varieties generated by \emph{positive} $\MV$-chains.  
\section{Natural dualities for varieties generated by positive MV-chains}\label{sec:NatDualposMv}
In Subsection~\ref{subsec:PositiveMVChains}, we introduce the varieties $\PMV_n$ of positive $\MV_n$-algebras, generated by the positive $\MV_n$-chains $\PL_n$. We prove some basic facts about congruences and subalgebras of $\PL_n$ and show that the variety generated by $\PL_n$ coincides with the quasi-variety generated by $\PL_n$. In Subsection~\ref{subsec:NaturalDualitiesPosMV}, we develop our natural dualities for the varieties $\PMV_n$. In particular, to this end the systematic study of subalgebras of the order relation $\leq$ (which is itself a subalgebra of $\PL_n \times \PL_n$) plays an important role.     
\subsection{Positive MV-chains}\label{subsec:PositiveMVChains}
Following the recent paper \cite{Abbadini2022}, we use the term positive $\MV$-algebra to refer to a negation-free (and implication-free) subreduct of an $\MV$-algebra. In particular, we focus on finite positive $\MV$-chains defined as follows.      
\begin{definition}\label{def:PMV_n}
Let $n\geq 1$ be a natural number. The \emph{$(n+1)$-element positive $\MV$-chain} is given by 
\[
\PL_n = \langle \{ 0, \tfrac{1}{n},\dots \tfrac{n-1}{n}, 1 \}, \wedge , \vee, \odot, \oplus, 0, 1\rangle,
\]  
understood as a reduct of $\lucas_n$. We write $\PMV_n$ for the variety $\variety{\PL_n}$ generated by $\PL_n$, and we refer to members of $\PMV_n$ as \emph{positive $\MV_n$-algebras} or $\PMV_n$-algebras.
\end{definition} 
Our first result about $\PL_n$ is that its subalgebras are the same as the subalgebras of $\lucas_n$ and, therefore (recall Subsection~\ref{subsec:MV-Algebras}), the subalgebra-lattice $\mathbb{S}(\PL_n)$ is isomorphic to the bounded lattice of diviors of $n$. 
\begin{proposition}\label{prop:subalgebrasOfPLn}
The subalgebras of $\PL_n$ are exactly given by the subuniverses 
\[
\PL_k \cong \{ 0, \tfrac{\ell}{n}, \dots, \tfrac{(k-1)\ell}{n}, 1\},
\]
where $n = k\cdot\ell$.
\end{proposition}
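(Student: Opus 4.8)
The plan is to prove both inclusions between the subuniverses of $\PL_n$ and the sets $\PL_k$. The easy direction is that each such set is a subuniverse: since $\PL_n$ is a reduct of $\lucas_n$ (its operations $\wedge,\vee,\odot,\oplus,0,1$ are among those of $\lucas_n$), every subuniverse of $\lucas_n$ is \emph{a fortiori} closed under the operations of $\PL_n$. As the subalgebras of $\lucas_n$ are exactly the $\lucas_k$ with $n=k\ell$ (recalled in Subsection~\ref{subsec:MV-Algebras}), each $\{0,\tfrac{\ell}{n},\dots,\tfrac{(k-1)\ell}{n},1\}$ is a subuniverse of $\PL_n$. The real content is therefore the converse: that dropping negation creates no \emph{new} subuniverses, even though $\PL_n$ carries fewer operations than $\lucas_n$.

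For the converse, I would take an arbitrary subuniverse $S$ of $\PL_n$ and let $a=\tfrac{m}{n}$ be its smallest positive element (which exists as $S$ is finite and contains $0<1$). The first step is to show $m\mid n$. Closing $a$ under $\oplus$ puts every $\tfrac{jm}{n}$ with $jm\leq n$ into $S$; let $p$ be largest with $pm\leq n$, so that $pa=\tfrac{pm}{n}\in S$. Computing $(pa)\odot a=\max\{0,\tfrac{(p+1)m-n}{n}\}$ and using $(p+1)m>n$ by maximality of $p$, I obtain $\tfrac{(p+1)m-n}{n}\in S$. If $pm<n$ this element is positive yet strictly below $a$, contradicting minimality; hence $pm=n$, that is $m\mid n$. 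Setting $k=n/m$, the multiples $0,a,2a,\dots,(k-1)a,ka=1$ all lie in $S$, so $\PL_k\subseteq S$.

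For the reverse inclusion, suppose toward a contradiction that some $\tfrac{r}{n}\in S$ is not a multiple of $a$, and write $r=qm+s$ with $0<s<m$ and $q\geq 1$. The key observation is that, although $\PL_n$ lacks negation, the negation $\neg(qa)=\tfrac{n-qm}{n}=(k-q)a$ of a multiple of $a$ is again a multiple of $a$ (here $0\leq k-q\leq k$), and hence already lies in $S$. Truncated subtraction then becomes available \emph{inside} $S$ via $\odot$: I would compute $\tfrac{r}{n}\odot\tfrac{n-qm}{n}=\max\{0,\tfrac{r-qm}{n}\}=\tfrac{s}{n}$, producing a positive element of $S$ strictly below $a$ and contradicting minimality. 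Thus no such $\tfrac{r}{n}$ exists, $S\subseteq\PL_k$, and $S=\PL_k$.

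The main obstacle is exactly the absence of negation: the truncated subtraction $x\ominus y=x\odot\neg y$ is not directly expressible in $\PL_n$, so one cannot naively peel off multiples of $a$. The crux is the observation that the only subtractions one needs involve multiples of the smallest positive element, and the negations of such multiples are themselves multiples of $a$ — hence automatically present in $S$ — which is precisely what lets the $\odot$-computation substitute for subtraction. Everything else is routine arithmetic with $\oplus$ and $\odot$.
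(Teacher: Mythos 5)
Your proposal is correct and follows essentially the same route as the paper: take the smallest positive element $a=\tfrac{m}{n}$ of the subuniverse, use the division algorithm together with closure under $\oplus$ and $\odot$ to force $m\mid n$, and then rule out any element that is not a multiple of $a$ by $\odot$-multiplying it with the appropriate multiple $(k-q)a$ (which is exactly the computation the paper performs, there written as $\tfrac{s}{n}\odot\tfrac{(k-x)\ell}{n}$). The only cosmetic differences are that you also spell out the easy direction (that each $\PL_k$ is a subuniverse, which the paper leaves implicit) and that you frame the final computation as "negations of multiples of $a$ are again multiples of $a$," which is a nice conceptual gloss on the same argument.
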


\begin{proof}
Let $\alg{L} \subseteq \PL_n$ be an arbitrary subalgebra and let $\frac{\ell}{n}$ be the unique minimal element of $\alg{L}$ which is not zero. If $\ell = n$, then $\alg{L} = \PL_1$ holds, so assume $\ell < n$. Note that this implies $\frac{\ell}{n} \leq \frac{1}{2}$, since otherwise $\frac{\ell}{n}\odot \frac{\ell}{n}$ would be an element of $\alg{L}$ greater than zero but strictly smaller than $\frac{\ell}{n}$, contradicting our choice of $\ell$. Furthermore, $\ell$ needs to be a divisor of $n$, since otherwise we can find natural numbers $x \geq 1$ and $0 < r < \ell$ with $n = x\ell + r$. But then $\frac{x\ell}{n} \odot \frac{\ell}{n} = \frac{r}{n}$ is a member of $\alg{L}$ above zero but strictly below $\frac{\ell}{n}$, again contradicting our choice of $\ell$. Thus we showed that $\ell$ divides $n$ and therefore, by closure of $\alg{L}$ under $\oplus$, we showed that $\PL_k$ as in the proposition is contained in $\alg{L}$. 

Suppose towards contradiction that there is some $\frac{s}{n} \in \alg{L}{\setminus}\PL_k$. Then $\ell < s$ holds by the above assumption and we can find natural numbers $k > x > 1$ and $0 < r < \ell$ such that $s = x\ell + r$. This is equivalent to 
\[
r + n = s - x\ell + n = s + (k-x)\ell.
\]
Therefore, we conclude that $\frac{r}{n} = \frac{s}{n} \odot \frac{(k-x)\ell}{n}$ is in $\alg{L}$, once more contradicting minimality in our choice of $\ell$.              
\end{proof}

As noted at the end of Subsection~\ref{subsec:MV-Algebras}, the unary operations $\tau_d$ can be defined from $\odot$ and $\oplus$ alone. This fact will be of high importance in many proofs later on. 

\begin{lemma}\label{lem:taus_definable} \cite{Ostermann1988}
For every $d \in \PL_n$, the unary operation $\tau_d\colon \text{\L}_n \to \text{\L}_n$ given by 
\[
\tau_d(x) = 
\begin{cases}
1 & \text{ if } d \leq x, \\
0 & \text { otherwise } 
\end{cases}
\]
is term-definable in $\PL_n$.
\end{lemma}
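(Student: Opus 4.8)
The plan is to build, for each threshold $j \in \{1, \dots, n\}$, a unary term whose value is $1$ exactly on $\{x : x \geq j/n\}$ and $0$ below. I would first record the harmless sanity check that every unary term operation of $\PL_n$ is monotone and sends $\{0,1\}$ into $\{0,1\}$ (as $\{0,1\}$ is a subuniverse), so a monotone $\{0,1\}$-valued function such as $\tau_{j/n}$ is at least type-consistent with being a term. The two obvious base cases are $\tau_1(x) = x \odot \cdots \odot x$ ($n$ factors), since $\max\{0, nx - (n-1)\}$ equals $1$ only at $x=1$; and, more generally, $\tau_{j/n}$ for every divisor $j \mid n$, obtained as $(\underbrace{x \oplus \cdots \oplus x}_{n/j})^{\odot n}$, because the inner $\oplus$-power reaches $1$ precisely when $x \geq j/n$.

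The engine of the proof consists of two ``zoom'' identities reducing one threshold to another. Writing $x^{\odot k} = \max\{0, kx - (k-1)\}$ and $a \cdot x = \min\{1, ax\}$ for the iterated operations, I would check directly on grid points that
\[
\tau_{j/n}(x) = \tau_{c/n}\big(x^{\odot k}\big), \quad c = kj - (k-1)n,
\]
whenever $c \geq 1$ (the $\odot$-power stretches the top of the chain and pulls the threshold down), and dually
\[
\tau_{j/n}(x) = \tau_{(aj)/n}(a \cdot x)
\]
whenever $aj \leq n$ (the $\oplus$-power stretches the bottom and pushes the threshold up). Each verification is a short case distinction on whether the relevant $\max$ or $\min$ is attained at its nontrivial branch.

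With these in hand I would prove the lemma by strong induction on $j$, taking the divisors of $n$ as base cases. For a non-divisor $j > n/2$, the $\odot$-identity with $k = \lfloor (j-1)/(n-j)\rfloor + 1$ produces a new threshold $r = kj - (k-1)n \in [1, n-j]$, so $r \leq n-j < j$. For a non-divisor $j \leq n/2$, I would first apply the $\oplus$-identity with $a = \lfloor n/j \rfloor \geq 2$ to move the threshold to $j' = aj \in (n/2, n)$, and then apply the $\odot$-identity to $j'$ exactly as above, landing at some $r \leq n - j' < j$. In both cases $r < j$, so the inductive hypothesis applies and $\tau_{j/n}$ is term-definable; the constant $\tau_0 = 1$ completes the list.

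The step I expect to be the main obstacle is the descent itself, and in particular guaranteeing that the chosen $\odot$-power is admissible, i.e. that $k = \lfloor (j'-1)/(n-j')\rfloor + 1$ yields $c \geq 1$, which reduces to the clean inequality $k \leq (n-1)/(n-j')$. The tempting shortcuts are a single composition of one $\oplus$-power with one $\odot$-power, or iterating the doubling identities ($k = a = 2$) alone, and I would expect both to fail: doubling realizes only the map $j \mapsto 2j \bmod n$, which returns to a divisor precisely when the odd part of $n$ divides $j$, leaving the ``middle'' thresholds unreachable. Interleaving one $\oplus$-stretch before the $\odot$-stretch, with general multipliers, is what forces the strict decrease $r < j$. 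I would also flag a dead end to avoid: truncated subtraction $x \mapsto \max\{0, x - j/n\}$ is \emph{not} term-definable here, since any term must send $1$ into $\{0,1\}$, so the argument genuinely has to pass through threshold functions rather than a ``subtract-and-saturate'' formula.
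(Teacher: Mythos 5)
Your proof is correct, but note that the paper does not actually prove this lemma at all: it is stated with a citation to Ostermann \cite[pp.~344--345]{Ostermann1988}, where the construction of the $\tau_d$ from $\odot$ and $\oplus$ is carried out. So your argument is a self-contained replacement for an external reference rather than a variant of an in-paper proof. Checking your details: both zoom identities hold (for the $\odot$-identity, if $x=\tfrac{m}{n}$ with $m\geq j$ then $km-(k-1)n\geq c\geq 1$, and if $m\leq j-1$ then $km-(k-1)n\leq c-k<c$, so $\tau_{c/n}(x^{\odot k})=\tau_{j/n}(x)$; the $\oplus$-identity uses that $am\geq n\geq aj$ already forces $m\geq j$, so the truncation at $1$ never produces a false positive), and the arithmetic of the descent goes through: writing $c=n-k(n-j)$, your choice $k=\lfloor(j-1)/(n-j)\rfloor+1$ gives $k(n-j)\geq j$ (since $k(n-j)>j-1$ and both sides are integers) and $k(n-j)\leq n-1$ (since $k\leq\tfrac{j-1}{n-j}+1=\tfrac{n-1}{n-j}$), hence $1\leq c\leq n-j$, which for $j>n/2$ yields $c<j$; in the case $j\leq n/2$, the pre-stretch by $a=\lfloor n/j\rfloor$ lands at $j'=aj$ with $n-j<j'<n$ (strictness of $j'<n$ uses $j\nmid n$), whence $r\leq n-j'<j$. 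Together with the divisor base cases, $\tau_0\equiv 1$, and the fact that $j=1$ always divides $n$, the strong induction is well-founded. Compared with simply citing Ostermann, your route costs a page of elementary verification but buys a proof that the reader can check entirely within the paper; your closing remarks (that single-shot $\oplus$-then-$\odot$ compositions and pure doubling both fail, and that truncated subtraction is not available since unary terms must preserve the subuniverse $\{0,1\}$) correctly identify why the interleaved descent with general multipliers is genuinely needed.
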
    

\begin{remark}\label{rem:OtherAlgebrasWithTaus}
While we chose to exclusively focus on $\PL_n$ in this paper, all results up until Lemma~\ref{lem:minimalrelation} actually hold for \emph{every} finite algebra $\alg{D}$ which has a bounded-lattice reduct and in which $\tau_d$ defined as above is term-definable in $\alg{D}$ for every $d\in \alg{D}$. In particular, this encompasses the negation-free reducts of the finite \emph{\L ukasiewicz-Moisil chains} (see, e.g., \cite{Boicescu1991}).
\end{remark}

Our first goal is to show that the variety $\PMV_n$ coincides with the quasi-variety $\qvariety{\PL_n}$ generated by $\PL_n$. For this, we essentially only have to show the following. 

\begin{lemma}\label{lem:subalgebrasSimple}
Every subalgebra $\PL_k \subseteq \PL_n$ (including $\PL_n$ itself) is simple. 
\end{lemma}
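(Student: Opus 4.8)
The plan is to exploit the term-definable unary operations $\tau_d$ furnished by Lemma~\ref{lem:taus_definable}. Since simplicity is preserved under isomorphism and every subalgebra $\PL_k \subseteq \PL_n$ is itself a finite positive $\MV$-chain, it suffices to prove that an arbitrary $\PL_n$ is simple; the very same argument then applies verbatim to each $\PL_k$, because the relevant terms $\tau_d$ with $d \in \PL_k$ are again term-definable in $\PL_k$ by Lemma~\ref{lem:taus_definable}.

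Concretely, let $\theta$ be a congruence on $\PL_n$ with $\theta \neq \Delta$. Then there exist elements $a < b$ in the chain with $(a,b) \in \theta$. First I would apply the term function $\tau_b$: since congruences are compatible with all term operations, $(a,b) \in \theta$ yields $(\tau_b(a), \tau_b(b)) \in \theta$. By the definition of $\tau_b$ we have $\tau_b(b) = 1$ (as $b \leq b$) while $\tau_b(a) = 0$ (as $b \not\leq a$ for $a < b$), so that $(0,1) \in \theta$.

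Finally I would propagate this collapse to the whole algebra using the lattice reduct: for any $x \in \PL_n$, meeting $(0,1) \in \theta$ with $x$ gives $(x \wedge 0,\, x \wedge 1) = (0, x) \in \theta$, so every element is $\theta$-related to $0$ and hence $\theta = \nabla$. Thus the only congruences on $\PL_n$ are $\Delta$ and $\nabla$, which is exactly simplicity.

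The one point requiring care — and the closest thing to an obstacle — is making sure the separating operation is genuinely available as a \emph{term} operation inside every subalgebra, and not merely in the ambient $\PL_n$. This is precisely what Lemma~\ref{lem:taus_definable} guarantees (applied to the subalgebra itself, viewed as a positive $\MV$-chain), and it is the reason the argument goes through uniformly for all $\PL_k$. Everything else is a routine use of the fact that congruences respect term operations and the meet.
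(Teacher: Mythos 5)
Your proposal is correct and follows essentially the same route as the paper's proof: apply the term operation $\tau_d$ (Lemma~\ref{lem:taus_definable}) to a pair $(a,b)\in\theta$ with $a<b$ to obtain $(0,1)\in\theta$, then use the lattice reduct to collapse everything. The only cosmetic differences are that the paper works directly inside the subalgebra $\PL_k$ (using that terms of $\PL_n$ restrict to term operations on subalgebras) and spreads the collapse by writing $(x,y)=\big((1,0)\wedge(x,x)\big)\vee\big((0,1)\wedge(y,y)\big)$, whereas you pass through the isomorphism $\PL_k\cong\PL_k$ of Proposition~\ref{prop:subalgebrasOfPLn} and finish via $(0,x)\in\theta$ together with symmetry and transitivity.
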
 

\begin{proof}
Let $\theta$ be a congruence relation on $\alg{S}$ and let $c,d\in \PL_k$ be distinct elements with $(c,d)\in \theta$. We show that $\theta$ is the trivial congruence identifying all members of $\PL_k$. Without loss of generality, we assume $c < d$. 
Since $\tau_d$ from Lemma~\ref{lem:taus_definable} is term-definable in $\PL_n$, we have $(0,1) = (\tau_d(c), \tau_d(d)) \in \theta$ and $(1,0)\in \theta$ by symmetry. Now, for arbitrary $x,y\in \PL_k$, we have 
\[
(x,y) = \big((1,0) \wedge (x,x)\big) \vee \big((0,1)\wedge (y,y)\big) \in \theta,
\]
which implies $\theta = \PL_k^2$. 
\end{proof}

Since $\PMV_n$ is congruence distributive (because $\PL_n$ is lattice-based and thus has a majority term), a standard application of Jónsson's Lemma~\cite{Jonsson1967} yields the following (see, \emph{e.g.}, \cite[Theorem 1.3.6]{ClarkDavey1998}).

\begin{corollary}\label{cor:HSP=ISP}
$\PMV_n = \qvariety{\PL_n}$. 
\end{corollary}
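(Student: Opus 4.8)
The inclusion $\qvariety{\PL_n} \subseteq \variety{\PL_n}$ is automatic, so the plan is to prove the reverse inclusion $\variety{\PL_n} \subseteq \qvariety{\PL_n}$ via Jónsson's Lemma, exactly as the surrounding text suggests. First I would record that $\PMV_n$ is congruence distributive: since $\PL_n$ carries a bounded-lattice reduct, the lattice median $(x\wedge y)\vee(y\wedge z)\vee(x\wedge z)$ is a majority term, which is precisely the hypothesis needed to apply Jónsson's Lemma to the one-element generating class $\{\PL_n\}$.

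With congruence distributivity in hand, I would invoke Jónsson's Lemma to locate the subdirectly irreducible members of $\PMV_n$. It gives that every subdirectly irreducible algebra in $\variety{\PL_n} = \mathbb{H}\mathbb{S}\mathbb{P}(\PL_n)$ already lies in $\mathbb{H}\mathbb{S}\mathbb{P}_U(\PL_n)$, where $\mathbb{P}_U$ denotes closure under ultraproducts. Since $\PL_n$ is finite, every ultraproduct of copies of $\PL_n$ is isomorphic to $\PL_n$ itself, so $\mathbb{P}_U(\PL_n) = \mathbb{I}(\PL_n)$ and the subdirectly irreducibles of $\PMV_n$ are confined to $\mathbb{H}\mathbb{S}(\PL_n)$. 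The crucial simplification now comes from Lemma~\ref{lem:subalgebrasSimple}: every subalgebra of $\PL_n$ is one of the $\PL_k$ with $k \mid n$ and is \emph{simple}. A simple algebra has only trivial algebras and isomorphic copies of itself as homomorphic images, so the nontrivial members of $\mathbb{H}\mathbb{S}(\PL_n)$ are exactly the subalgebras $\PL_k$. Consequently every subdirectly irreducible member of $\PMV_n$ is isomorphic to some subalgebra $\PL_k \subseteq \PL_n$.

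Finally, I would close the argument by Birkhoff's subdirect representation theorem: an arbitrary $\alg{A} \in \PMV_n$ embeds as a subdirect product $\alg{A} \hookrightarrow \prod_{i} \alg{B}_i$ of subdirectly irreducible quotients $\alg{B}_i$, each of which (by the previous paragraph) is isomorphic to a subalgebra of $\PL_n$. Composing with the inclusions $\alg{B}_i \hookrightarrow \PL_n$ exhibits $\alg{A}$ as a subalgebra of a power $\PL_n^I$, whence $\alg{A} \in \qvariety{\PL_n}$. The whole argument is routine once Lemma~\ref{lem:subalgebrasSimple} is available: the two points demanding any care are the reduction of ultraproducts to $\PL_n$ via finiteness and the observation that simplicity prevents $\mathbb{H}\mathbb{S}(\PL_n)$ from producing any subdirectly irreducibles beyond the subalgebras themselves. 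The genuine mathematical work --- the simplicity of the subalgebras --- has already been carried out, so no further obstacle remains.
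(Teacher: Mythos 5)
Your proposal is correct and takes essentially the same route as the paper, which compresses the whole argument into one line: congruence distributivity from the lattice-based majority term, Jónsson's Lemma (citing \cite[Theorem 1.3.6]{ClarkDavey1998}), and the simplicity of all subalgebras from Lemma~\ref{lem:subalgebrasSimple}. Your write-up merely unfolds that standard application in full detail (ultraproducts of the finite algebra $\PL_n$ collapse to $\PL_n$, so the subdirectly irreducibles lie in $\mathbb{H}\mathbb{S}(\PL_n)$, simplicity confines them to the subalgebras $\PL_k$, and Birkhoff's subdirect representation theorem finishes), which matches the intended proof exactly.
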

This allows us to study the variety $\PMV_n$ via the theory of natural dualities in what follows.   
\subsection{The natural dualities}\label{subsec:NaturalDualitiesPosMV}
This subsection is dedicated to finding a simple alter-ego $\utilde{\PL}_n$ of $\PL_n$ which yields a `useful' \cite[Chapter 6]{ClarkDavey1998} strong duality on $\PMV_n$. Since $\PL_n$ has a bounded lattice reduct, it has a majority term and no trivial subalgebras. Furthermore, by Lemma~\ref{lem:subalgebrasSimple} we know that every subalgebra of $\PL_n$ is subdirectly irreducible. Therefore, we may use Corollary \ref{cor:NUStrongDuality} (\emph{i.e.}, the NU Strong Duality Corollary \cite[Corollary 3.3.9]{ClarkDavey1998}) as our starting point. This states that  
\begin{equation}\label{eq:1}
\langle \{ 0, \tfrac{1}{n},\dots, \tfrac{n-1}{n},1 \}, P_1, \mathbb{S}(\PL_n\times \PL_n), \mathcal{T}_{\mathrm{dis}} \rangle,
\end{equation}
yields a strong duality for $\PMV_n$, where $P_1$ is the set of all unary partial homomorphisms $\PL_n\to \PL_n$. In the following we show that, as for $\lucas_n$, the only partial homomorphisms of this kind are the identities of subalgebras of $\PL_n$.   

\begin{lemma}\label{lem:partialHomomorphismsAreIdentities}
Let $\PL_k\subseteq \PL_n$ be a subalgebra. Then the only homomorphism $\PL_k \to \PL_n$ is the identity on $\PL_k$ followed by inclusion. 
\end{lemma}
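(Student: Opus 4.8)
The plan is to show that any homomorphism $\varphi\colon \PL_k \to \PL_n$ must fix each element of $\PL_k$, so that $\varphi$ is forced to be the inclusion. The key observation is that the unary terms $\tau_d$ from Lemma~\ref{lem:taus_definable} are term-definable in $\PL_n$ and hence preserved by every homomorphism. First I would note that $\varphi$ preserves the bounds, so $\varphi(0) = 0$ and $\varphi(1) = 1$; this already handles the endpoints and starts the induction on the elements of $\PL_k$, listed in increasing order as $0 = d_0 < d_1 < \dots < d_k = 1$.

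The heart of the argument is to pin down the image of an arbitrary $d = d_i \in \PL_k$ with $0 < d < 1$. Since $\tau_d$ is a term operation and $\varphi$ is a homomorphism, we have the commutation identity $\varphi(\tau_d(x)) = \tau_d(\varphi(x))$ for all $x \in \PL_k$. Applying this with $x = d$ gives $\tau_d(\varphi(d)) = \varphi(\tau_d(d)) = \varphi(1) = 1$, which forces $d \leq \varphi(d)$ by the definition of $\tau_d$. To obtain the reverse inequality I would exploit the fact that $\tau_d$ detects the threshold $d$: for the element $e$ immediately below $d$ in $\PL_k$ (or more robustly, using $\tau_d$ evaluated at elements strictly below $d$), one has $\tau_d(e) = 0$ in $\PL_k$, and the commutation identity then gives $\tau_d(\varphi(e)) = 0$, i.e.\ $\varphi(e) < d$. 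Combining the order-preservation of $\varphi$ with these two bounds should squeeze $\varphi(d)$ to equal $d$; concretely, if $\varphi(d) > d$ were strict, there would be some threshold value between $d$ and $\varphi(d)$ that $\tau$-terms could separate, contradicting one of the preservation identities.

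The main obstacle I anticipate is making the reverse inequality $\varphi(d) \leq d$ genuinely tight rather than just $\varphi(d) \geq d$, since a single $\tau_d$ only gives a lower bound. The clean way around this is to use the whole family $\{\tau_{d'} : d' \in \PL_n\}$ rather than just $\tau_d$: for each value $d' \in \PL_n$ with $d' > d$, one checks that $\tau_{d'}(d) = 0$ in the relevant sense, and preservation forces $\tau_{d'}(\varphi(d)) = 0$, hence $\varphi(d) \not\geq d'$ for every $d' > d$ lying in $\PL_k$; taking $d'$ to be the successor of $d$ in $\PL_k$ yields $\varphi(d) < d'$, which together with $\varphi(d) \geq d$ gives $\varphi(d) = d$. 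A subtlety to watch is that the $\tau_{d'}$ for $d' \notin \PL_k$ are still term operations of $\PL_n$ but need not restrict to self-maps of $\PL_k$; however, since $\varphi$ maps into $\PL_n$ and the $\tau$-terms are defined on all of $\PL_n$, the preservation identities remain valid as statements about the composite maps, so this causes no real trouble. Once $\varphi(d_i) = d_i$ is established for every $i$, we conclude $\varphi = \id_{\PL_k}$ followed by inclusion, as claimed.
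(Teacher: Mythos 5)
Your overall strategy is the right one, and it is in fact the same device the paper itself uses: the unary terms $\tau_d$ are preserved by any homomorphism, and evaluating them pins down images. Your lower bound $d \leq \varphi(d)$, obtained from $\tau_d(\varphi(d)) = \varphi(\tau_d(d)) = \varphi(1) = 1$, is correct. However, your final step has a genuine flaw. After correctly establishing $\tau_{d'}(\varphi(d)) = 0$ for \emph{every} $d' \in \PL_n$ with $d' > d$, you weaken this to ``$\varphi(d) \not\geq d'$ for every $d' > d$ lying in $\PL_k$'' and then take $d'$ to be the successor of $d$ \emph{in $\PL_k$}. That instantiation is too weak: consecutive elements of $\PL_k$ are in general not consecutive in $\PL_n$, so the squeeze $d \leq \varphi(d) < d'$ does not force $\varphi(d) = d$. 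Concretely, for $\PL_2 = \{0, \tfrac{2}{4}, 1\} \subseteq \PL_4$ and $d = \tfrac{2}{4}$, the successor of $d$ in $\PL_2$ is $1$, and the bounds $\tfrac{2}{4} \leq \varphi(d) < 1$ still leave the value $\varphi(d) = \tfrac{3}{4}$ open; excluding it requires $\tau_{3/4}$, whose threshold lies outside $\PL_k$.

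The repair is immediate and is already contained in your own intermediate claim, which you justified correctly when discussing the subtlety about $\tau_{d'}$ with $d' \notin \PL_k$: since $\tau_{d'}(\varphi(d)) = 0$ holds for \emph{all} $d' \in \PL_n$ with $d' > d$, either take $d'$ to be the successor of $d$ in $\PL_n$, or simply observe that if $\varphi(d) > d$ then choosing $d' = \varphi(d)$ yields $\tau_{\varphi(d)}(\varphi(d)) = 0$, contradicting $\tau_{\varphi(d)}(\varphi(d)) = 1$; either way $\varphi(d) \leq d$ follows. With that one-line change your proof is correct and essentially coincides with the paper's, recast as a squeeze rather than a case split: the paper supposes $h(s) = d$ with $d \neq s$ and derives a contradiction in two cases, applying $\tau_d$ (with threshold $d = h(s) \in \PL_n$, not necessarily in $\PL_k$ --- exactly the device your last step must also invoke) when $s < d$, and applying $\tau_s$ when $d < s$.
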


\begin{proof}
Let $h\colon \PL_k \to \alg{D}$ be a homomorphism. Suppose the are some $s\in \PL_k$ and $d \in \PL_n$ such that $h(s) = d$ and $s \neq d$. Recall that $\tau_d$ and $\tau_s$ from Lemma~\ref{lem:taus_definable} are term-definable and thus preserved by $h$.   
If $s < d$ then $1 = \tau_d(h(s)) = h(\tau_d(s)) = h(0) = 0$ yields a contradiction.
If $r < s$ then $1 = h(\tau_{s}(s)) = \tau_s(h(s)) = \tau_s(d) = 0$ also yields a contradiction. Thus no such elements $s$ and $d$ can exist and we showed that $h(s) = s$  holds for all $s\in \PL_k$. 
\end{proof}

Therefore, as in Example~\ref{exam:SemiPrimalStrongDuality}, the collection $P_1$ of unary partial homomorphisms is strongly entailed by the collection of unary algebraic relations $\mathbb{S}(\PL_n)$. Now we take a closer look at the binary algebraic relations in $\mathbb{S}(\PL_n\times \PL_n)$. Contrary to $\lucas_n$, the algebra $\PL_n\times \PL_n$ has subalgebras which are not direct products of subalgebras of $\PL_n$. For example, since all operations of $\PL_n$ are order-preserving, the relation $\leq$ and its converse $\geq$ are clearly subalgebras of $\PL_n\times \PL_n$. In the following, we show that every other subalgebra of $\PL_n$ which is not a direct product of subalgebras is contained in one of those.  

\begin{lemma}\label{lem:SubalgebrasOforder}
Every subalgebra $\alg{R}\subseteq \PL_n\times \PL_n$ which is not a direct product of subalgebras of $\PL_n$ is a subalgebra of $\leq$ or of $\geq$.   
\end{lemma}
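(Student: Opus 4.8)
The plan is to prove the contrapositive: if $\alg{R}$ is contained neither in $\leq$ nor in $\geq$, then $\alg{R}$ equals the direct product $\pi_1(\alg{R}) \times \pi_2(\alg{R})$ of its two projections. The projections are subalgebras of $\PL_n$ (projections of a subalgebra of a product are subalgebras), and by Proposition~\ref{prop:subalgebrasOfPLn} they are of the form $\PL_{k_1}, \PL_{k_2}$. Since $\alg{R} \subseteq \pi_1(\alg{R}) \times \pi_2(\alg{R})$ holds automatically, a subalgebra is a direct product exactly when $\alg{R} = \pi_1(\alg{R}) \times \pi_2(\alg{R})$, so the whole content lies in establishing the reverse inclusion under the stated hypothesis.

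First I would extract two Boolean witnesses. The crucial tool is that each $\tau_d$ with $d \in \PL_n$ is a term (Lemma~\ref{lem:taus_definable}), so $\alg{R}$ is closed under applying it coordinatewise. If $\alg{R} \not\subseteq {\leq}$, I pick $(a,b) \in \alg{R}$ with $a > b$ (possible since $\PL_n$ is a chain); applying $\tau_a$ gives $(\tau_a(a), \tau_a(b)) = (1,0) \in \alg{R}$, because $a \leq a$ but $a \not\leq b$. Symmetrically, if $\alg{R} \not\subseteq {\geq}$, a pair $(c,d) \in \alg{R}$ with $c < d$ yields $(\tau_d(c), \tau_d(d)) = (0,1) \in \alg{R}$. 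Hence under our hypothesis both $(1,0)$ and $(0,1)$ lie in $\alg{R}$.

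Next I would reconstruct an arbitrary pair. Fix $x \in \pi_1(\alg{R})$ and $y \in \pi_2(\alg{R})$ and choose witnesses $(x, y') \in \alg{R}$ and $(x', y) \in \alg{R}$. Using closure under $\wedge$ and $\vee$ together with the two Boolean elements just obtained,
\[
\big((x, y') \wedge (1,0)\big) \vee \big((x', y) \wedge (0,1)\big) = (x,0) \vee (0,y) = (x,y),
\]
so $(x,y) \in \alg{R}$. This is the same computation as in Lemma~\ref{lem:subalgebrasSimple}, except that there the diagonal was supplied by the congruence, whereas here the two witness pairs play that role. Thus $\alg{R} = \pi_1(\alg{R}) \times \pi_2(\alg{R})$, contradicting that $\alg{R}$ is not a direct product.

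I expect the only step worth stating carefully to be the passage from a single off-order witness to the Boolean element $(1,0)$ (resp. $(0,1)$): one must choose the threshold $d$ so that $\tau_d$ separates the two coordinates, which the total order on $\PL_n$ guarantees by simply taking $d$ to be the larger of the two coordinates. Everything after that is routine closure under the lattice operations.
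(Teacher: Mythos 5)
Your proposal is correct and follows essentially the same route as the paper: both extract $(1,0)$ and $(0,1)$ from off-order witnesses via the term-definable operations $\tau_d$, then recover every pair $(x,y) \in \pr_1(\alg{R}) \times \pr_2(\alg{R})$ by the lattice computation from Lemma~\ref{lem:subalgebrasSimple}. Your write-up even makes explicit the one detail the paper leaves implicit, namely that the diagonal pairs used in Lemma~\ref{lem:subalgebrasSimple} must here be replaced by the witness pairs $(x,y')$ and $(x',y)$.
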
   

\begin{proof}
Suppose that $\alg{R}$ is neither a subset of $\leq$ nor of $\geq$. We show that this implies that $\alg{R}$ is a direct product of subalgebras of $\PL_n$. Since $\alg{R}$ is not a subset of $\leq$, there is $(d_1,c_1)\in \alg{R}$ with $d_1 > c_1$. Similarly, there is $(c_2,d_2) \in \alg{R}$ with $c_2 < d_2$. This implies that $(1,0) = \tau_{d_1}(d_1,c_1)$ and $(0,1) = \tau_{d_2}(c_2,d_2)$ are both in $\alg{R}$. As in the proof of Lemma~\ref{lem:subalgebrasSimple}, with this we can show that $\alg{R}$ is the full direct product of its two projections $\pr_1(\alg{R})$ and $\pr_2(\alg{R})$.
\end{proof}

Since every binary relation strongly entails its converse and all products of subalgebras of $\PL_n$ are strongly entailed by $\mathbb{S}(\PL_n)$, it follows that the structure  
\begin{equation}\label{eq:2}
\langle \{ 0, \tfrac{1}{n},\dots, \tfrac{n-1}{n},1 \}, \mathbb{S}(\PL_n) \cup \mathbb{S}(\leq), \mathcal{T}_{\mathrm{dis}} \rangle
\end{equation}
yields a strong duality for $\PMV_n$, since it strongly entails the structure from Equation~(\ref{eq:1}). 

While the structure given in Equation~(\ref{eq:2}) is already much simpler than that in Equation~(\ref{eq:1}), it is still far from optimal. Therefore, we keep on studying $\mathbb{S}(\leq)$ in order to further simplify this alter ego.   

A somewhat special role is played by the subalgebra of the order $\lhd \in \mathbb{S}(\leq)$ given by 
\[
\lhd = \{ (x,y) \mid x = 0 \text{ or } y = 1 \}.
\] 
It is easy to see that this is a subalgebra since $0 \wedge x = 0 \odot x = 0$ and $1 \vee x = 1 \oplus x = 1$ for all $x\in \PL_n$. Unfortunately, except for the case $n = 2$, this is not the only non-diagonal proper subalgebra of the order relation. However, it is minimal among those subalgebras in the following sense.

\begin{lemma}\label{lem:minimalrelation}
Let $\alg{R}\subseteq \PL_n\times \PL_n$ be a subalgebra of the order $\leq$, which is not the diagonal of a subalgebra of $\PL_n$, and $\alg{S} = \pr_1(\alg{R}) \times \pr_2(\alg{R})$. Then $\lhd{\mid}_{\alg{S}} \subseteq \alg{R} \subseteq {\leq}{\mid}_{\alg{S}}$. 
\end{lemma}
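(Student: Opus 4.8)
The plan is to prove the two claimed inclusions separately, and I expect the right-hand one to be immediate while the left-hand one hinges on a single key observation. For $\alg{R} \subseteq {\leq}{\mid}_{\alg{S}}$ there is essentially nothing to do: since $\alg{R}$ is by hypothesis a subalgebra of $\leq$, every $(x,y)\in\alg{R}$ satisfies $x \leq y$, and since $\alg{R} \subseteq \pr_1(\alg{R}) \times \pr_2(\alg{R}) = \alg{S}$ holds trivially, we obtain $\alg{R} \subseteq {\leq} \cap \alg{S} = {\leq}{\mid}_{\alg{S}}$.

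The substance of the proof lies in $\lhd{\mid}_{\alg{S}} \subseteq \alg{R}$, and the crux is to first establish that $(0,1) \in \alg{R}$. Here I would invoke the hypothesis that $\alg{R}$ is \emph{not} the diagonal of a subalgebra. I claim this forces $\alg{R}$ to contain a strictly ordered pair $(c,d)$ with $c < d$: indeed, if every element of $\alg{R}$ lay on the diagonal, then $\alg{R} = \{(x,x) \mid x \in \pr_1(\alg{R})\}$ would be exactly the diagonal of the subalgebra $\pr_1(\alg{R})$ of $\PL_n$, contradicting the hypothesis. Given such a pair with $c < d$, I would apply the term $\tau_d$ from Lemma~\ref{lem:taus_definable} coordinatewise: since $\PL_n$ is a chain we have $d \not\leq c$ but $d \leq d$, so $\tau_d(c,d) = (\tau_d(c), \tau_d(d)) = (0,1)$. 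As $\tau_d$ is term-definable, $\alg{R}$ is closed under it and hence $(0,1) \in \alg{R}$. This is exactly the maneuver already used in the proofs of Lemma~\ref{lem:subalgebrasSimple} and Lemma~\ref{lem:SubalgebrasOforder}.

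With $(0,1) \in \alg{R}$ secured, the remaining elements of $\lhd{\mid}_{\alg{S}}$ are generated by closing under the lattice operations. Every element of $\lhd{\mid}_{\alg{S}}$ has first coordinate $0$ or second coordinate $1$. For a pair $(0,y)$ with $y \in \pr_2(\alg{R})$, I would pick any $(a,y) \in \alg{R}$ and compute $(0,1) \wedge (a,y) = (0,y) \in \alg{R}$; symmetrically, for a pair $(x,1)$ with $x \in \pr_1(\alg{R})$, pick any $(x,b) \in \alg{R}$ and compute $(0,1) \vee (x,b) = (x,1) \in \alg{R}$. This exhausts $\lhd{\mid}_{\alg{S}}$ and finishes the inclusion. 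The only genuinely nontrivial step is producing $(0,1)$ — that is, recognizing that ``not a diagonal'' yields a strict pair and then exploiting the term-definability of $\tau_d$; everything afterwards is routine meet/join bookkeeping.
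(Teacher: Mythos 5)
Your proposal is correct and follows essentially the same route as the paper's own proof: both extract a strict pair $(c,d)$ from the non-diagonality hypothesis, apply $\tau_d$ coordinatewise to obtain $(0,1)\in\alg{R}$, and then recover every element of $\lhd{\mid}_{\alg{S}}$ by meeting and joining arbitrary elements of $\alg{R}$ with $(0,1)$. Your write-up is if anything slightly more careful, since you also spell out the trivial inclusion $\alg{R}\subseteq{\leq}{\mid}_{\alg{S}}$ and justify why a diagonal-valued $\alg{R}$ would be the diagonal of the subalgebra $\pr_1(\alg{R})$.
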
  

\begin{proof}
Since $\alg{R}$ is not a diagonal, there is a pair $(x,y)\in \alg{R}$ with $x\neq y$, implying $x < y$. Therefore, $\tau_y(x,y) = (0,1) \in \alg{R}$ as well. Now, for any $(x',y') \in \alg{R}$ we find that 
\[
(0, y') = (x', y') \wedge (0,1) \text{ and } (x',1) = (x',y') \vee (0,1)
\]
are also members of $\alg{R}$, finishing the proof. 
\end{proof}       

Since diagonals of subalgebras are strongly entailed by $\mathbb{S}(\PL_n)$ already, we only need to consider subalgebras in-between (restrictions of) $\lhd$ and $\leq$. In order to describe these subalgebras, the following `closure' downwards in the first and upwards in the second component will be crucial.   

\begin{definition}\label{def:down/upClosure}
Let $\alg{S} = \PL_k \times \PL_{k'}$ be a product of subalgebras of $\PL_n$ (recall Proposition~\ref{prop:subalgebrasOfPLn}). Let $(x,y) \in \alg{S}$ with $x \leq y$. We denote by $C_{(x,y),\alg{S}}$ the following subset of $\alg{S}$ and $\leq$.
\[
C_{(x,y),\alg{S}} = \{ (x',y') \in \alg{S} \mid x'\leq x \text{ and } y \leq y'\}.
\]
If $\alg{S} = \PL_n\times \PL_n$, we simply use $C_{(x,y)}$ instead of $C_{(x,y),\alg{S}}$.  
\end{definition}

For example, Figure~\ref{fig:1} depicts the subsets $C_{(\frac{2}{6},\frac{3}{6})}$ and $C_{(\frac{2}{6},\frac{3}{6}),\alg{S}}$ for $\alg{S} = \PL_3\times \PL_2$ as subsets of $\PL_6\times \PL_6$.  

\begin{figure}[ht]
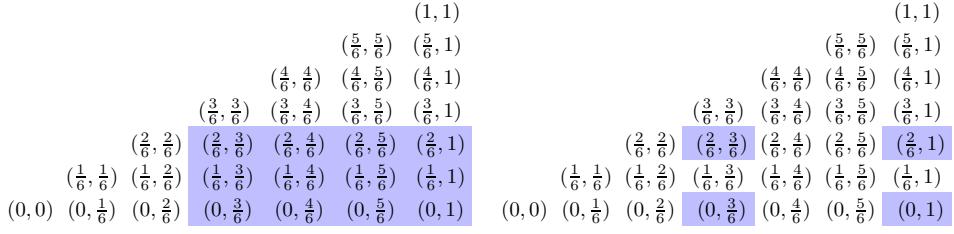

\begin{adjustbox}{width=\columnwidth,center}
\begin{tabular}{c c c}
\begin{tabular}{c c c c c c c}
&  &  &  &  &  &  $(1,1)$ \\

&  &  &  &  & $(\tfrac{5}{6},\tfrac{5}{6})$ &  $(\tfrac{5}{6},1)$ \\

&  &  &  & $(\tfrac{4}{6},\tfrac{4}{6})$ & $(\tfrac{4}{6},\tfrac{5}{6})$ &  $(\tfrac{4}{6},1)$ \\

&  & & $(\tfrac{3}{6},\tfrac{3}{6})$ & $(\tfrac{3}{6},\tfrac{4}{6})$ & $(\tfrac{3}{6},\tfrac{5}{6})$ &  $(\tfrac{3}{6},1)$ \\

& & $(\tfrac{2}{6},\tfrac{2}{6})$ & \cellcolor{blue!25} $(\tfrac{2}{6},\tfrac{3}{6})$ & \cellcolor{blue!25} $(\tfrac{2}{6},\tfrac{4}{6})$ & \cellcolor{blue!25} $(\tfrac{2}{6},\tfrac{5}{6})$ & \cellcolor{blue!25} $(\tfrac{2}{6},1)$ \\

& $(\tfrac{1}{6},\tfrac{1}{6})$ & $(\tfrac{1}{6},\tfrac{2}{6})$ & \cellcolor{blue!25} $(\tfrac{1}{6},\tfrac{3}{6})$ & \cellcolor{blue!25} $(\tfrac{1}{6},\tfrac{4}{6})$ & \cellcolor{blue!25} $(\tfrac{1}{6},\tfrac{5}{6})$ & \cellcolor{blue!25} $(\tfrac{1}{6},1)$ \\
 
 $(0,0)$ &  $(0,\tfrac{1}{6})$ &  $(0,\tfrac{2}{6})$ & \cellcolor{blue!25} $(0,\tfrac{3}{6})$ & \cellcolor{blue!25} $(0,\tfrac{4}{6})$ & \cellcolor{blue!25} $(0,\tfrac{5}{6})$ & \cellcolor{blue!25} $(0,1)$ \\ 
\end{tabular}
& &
\begin{tabular}{c c c c c c c}
&  &  &  &  &  &  $(1,1)$ \\

&  &  &  &  & $(\tfrac{5}{6},\tfrac{5}{6})$ &  $(\tfrac{5}{6},1)$ \\

&  &  &  & $(\tfrac{4}{6},\tfrac{4}{6})$ & $(\tfrac{4}{6},\tfrac{5}{6})$ &  $(\tfrac{4}{6},1)$ \\

&  & & $(\tfrac{3}{6},\tfrac{3}{6})$ & $(\tfrac{3}{6},\tfrac{4}{6})$ & $(\tfrac{3}{6},\tfrac{5}{6})$ &  $(\tfrac{3}{6},1)$ \\

& & $(\tfrac{2}{6},\tfrac{2}{6})$ & \cellcolor{blue!25} $(\tfrac{2}{6},\tfrac{3}{6})$ & $(\tfrac{2}{6},\tfrac{4}{6})$ & $(\tfrac{2}{6},\tfrac{5}{6})$ & \cellcolor{blue!25} $(\tfrac{2}{6},1)$ \\

& $(\tfrac{1}{6},\tfrac{1}{6})$ & $(\tfrac{1}{6},\tfrac{2}{6})$ & $(\tfrac{1}{6},\tfrac{3}{6})$ & $(\tfrac{1}{6},\tfrac{4}{6})$ & $(\tfrac{1}{6},\tfrac{5}{6})$ &  $(\tfrac{1}{6},1)$ \\
 
 $(0,0)$ &  $(0,\tfrac{1}{6})$ &  $(0,\tfrac{2}{6})$ & \cellcolor{blue!25} $(0,\tfrac{3}{6})$ & $(0,\tfrac{4}{6})$ &  $(0,\tfrac{5}{6})$ & \cellcolor{blue!25} $(0,1)$ \\ 
\end{tabular} \\
\end{tabular}
\end{adjustbox}
\caption{The sets $C_{(\frac{2}{6},\frac{3}{6})}$ and $C_{(\frac{2}{6},\frac{3}{6}),\PL_3 \times \PL_2}$ in the case $n = 6$.}
\label{fig:1}
\end{figure}
In the next lemma, we show that non-diagonal subalgebras of the order are closed under these subsets in the following sense.  
\begin{lemma}\label{lem:SubalgebraClosedUnderRectangles}
Let $\alg{R}\subseteq \PL_n\times \PL_n$ be a subalgebra of the order $\leq$ which is not the diagonal of a subalgebra of $\PL_n$, and $\alg{S} = \pr_1(\alg{R}) \times \pr_2(\alg{R})$. If $(x,y) \in \alg{R}$, then $C_{(x,y),\alg{S}} \subseteq \alg{R}$ as well.  
\end{lemma}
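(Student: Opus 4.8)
The plan is to read off what I need directly from the minimality statement in Lemma~\ref{lem:minimalrelation}. Since $\alg{R}$ is not a diagonal, that lemma gives $\lhd{\mid}_{\alg{S}} \subseteq \alg{R}$, and this already hands me two families of ``corner'' elements of $\alg{R}$ for free. Indeed, $\pr_1(\alg{R})$ and $\pr_2(\alg{R})$ are subalgebras of $\PL_n$ (projections are homomorphisms), so each contains both bounds $0$ and $1$. Hence for every $x' \in \pr_1(\alg{R})$ the pair $(x',1)$ lies in $\alg{S}$ and is in $\lhd$ (its second coordinate is $1$), so $(x',1) \in \lhd{\mid}_{\alg{S}} \subseteq \alg{R}$; dually, for every $y' \in \pr_2(\alg{R})$ we get $(0,y') \in \lhd{\mid}_{\alg{S}} \subseteq \alg{R}$.

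With these corners in hand the argument is a two-line lattice computation. Fix $(x,y) \in \alg{R}$ and take an arbitrary $(x',y') \in C_{(x,y),\alg{S}}$, so that $x' \in \pr_1(\alg{R})$, $y' \in \pr_2(\alg{R})$, and $x' \leq x \leq y \leq y'$ (the middle inequality because $\alg{R} \subseteq {\leq}$). First I would meet $(x,y)$ with the corner $(x',1) \in \alg{R}$: computing componentwise and using $x' \leq x$ and $y \leq 1$ gives $(x,y) \wedge (x',1) = (x',y) \in \alg{R}$. Then I would join the result with the corner $(0,y') \in \alg{R}$: using $0 \leq x'$ and $y \leq y'$ gives $(x',y) \vee (0,y') = (x',y') \in \alg{R}$. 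Since $(x',y')$ was an arbitrary element of $C_{(x,y),\alg{S}}$, this establishes the desired inclusion $C_{(x,y),\alg{S}} \subseteq \alg{R}$.

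I do not expect a genuine obstacle here: the substantive work has effectively been front-loaded into Lemma~\ref{lem:minimalrelation}, and once the corner elements are available the two operations simply ``slide'' the known pair $(x,y)$ leftward in the first coordinate (via $\wedge (x',1)$) and upward in the second (via $\vee (0,y')$) to reach any target in the rectangle. The only point requiring a moment of care is the bookkeeping that the corners $(x',1)$ and $(0,y')$ really do belong to $\lhd{\mid}_{\alg{S}}$, which amounts to the observation above that subalgebras of $\PL_n$ contain $0$ and $1$; everything else is a direct componentwise calculation with $\wedge$ and $\vee$.
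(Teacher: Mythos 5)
Your proof is correct and matches the paper's own argument essentially line for line: both invoke Lemma~\ref{lem:minimalrelation} to get $\lhd{\mid}_{\alg{S}} \subseteq \alg{R}$ and then compute $(x',y) = (x,y) \wedge (x',1)$ followed by $(x',y') = (x',y) \vee (0,y')$. Your extra bookkeeping check that $(x',1)$ and $(0,y')$ genuinely lie in $\lhd{\mid}_{\alg{S}}$ is a nice touch the paper leaves implicit.
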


\begin{proof}
By Lemma~\ref{lem:minimalrelation} we know that $\lhd{\mid}_{\alg{S}} \subseteq \alg{R}$. Now let $(x,y) \in \alg{R}$, and say $(x',y')\in \alg{S}$ satisfies $x' \leq x$ and $y \leq y'$. Then $(x',y) = (x,y) \wedge (x',1)$ is in $\alg{R}$ and, thus, $(x',y') = (x',y) \vee (0,y')$ is also in $\alg{R}$. 
\end{proof}

Therefore, clearly every $\alg{R}$ as in the above lemma is a union of sets of the form $C_{(x,y),\alg{S}}$. However, not all unions of sets of this form necessarily yield subalgebras. In the following, we identify exactly those unions which \emph{do} give rise to subalgebras of $\PL_n\times\PL_n$. 

\begin{proposition}\label{prop:subalgebrasAsUnionsOfRectangles}
Let $\alg{S} = \PL_k \times \PL_{k'}$ be a product of subalgebras of $\PL_n$. 
\begin{enumerate}
\item Let $\alg{R} \subseteq \PL_n \times \PL_n$ be a subalgebra of $\leq$, which is not the diagonal of a subalgebra of $\PL_n$, with $\pr_1(\alg{R})\times \pr_2(\alg{R}) = \alg{S}$. Then $\alg{R}$ can be expressed as
\[
\alg{R} = \bigcup_{i = 0}^{k} C_{(\frac{i}{k},y_i),\alg{S}}
\]
where $y_i$ is the minimal element of $\PL_{k'}$ with $(\frac{i}{k}, y_i) \in \alg{R}$ (in particular, $y_0 = 0$ and $y_k = 1$).
\item Let $y_0, \dots, y_k$ be an increasing sequence of elements of $\PL_{k'}$ with $y_0 = 0, y_k = 1$ and $\frac{i}{k} \leq y_i$ for all $i = 1,\dots, k-1$. Then 
\[
\alg{R} = \bigcup_{i = 0}^{k} C_{(\frac{i}{k},y_i),\alg{S}}
\]
is a subalgebra of $\alg{S}$ if and only if the conditions
\[
(\tfrac{i}{k}, y_i) \odot (\tfrac{j}{k}, y_j) \in \alg{R} \text{ and } (\tfrac{i}{k}, y_i) \oplus (\tfrac{j}{k}, y_j) \in \alg{R}
\]    
hold for all $i,j \in \{ 1, \dots, k-1 \}$. 
\end{enumerate}
\end{proposition}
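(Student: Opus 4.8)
For part (1) the work is essentially already done by the two preceding lemmas. First I would note that the $y_i$ are well defined: since $\pr_1(\alg{R}) = \PL_k$, for each $\tfrac{i}{k}$ there is at least one $y$ with $(\tfrac{i}{k},y)\in\alg{R}$, and $\PL_{k'}$ is finite, so a minimal one exists; moreover $\alg{R}\subseteq{\leq}$ forces $\tfrac{i}{k}\leq y_i$, so each $C_{(\frac{i}{k},y_i),\alg{S}}$ is defined. The inclusion $\supseteq$ is then immediate from Lemma~\ref{lem:SubalgebraClosedUnderRectangles}, which gives $C_{(\frac{i}{k},y_i),\alg{S}}\subseteq\alg{R}$ for every $i$. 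For $\subseteq$, given $(x,y)\in\alg{R}$ I write $x=\tfrac{i}{k}$; minimality of $y_i$ forces $y_i\leq y$ while $x\leq\tfrac{i}{k}$ trivially, so $(x,y)\in C_{(\frac{i}{k},y_i),\alg{S}}$. The boundary values follow from $(0,0),(1,1)\in\alg{R}$ (the constants) together with $\alg{R}\subseteq{\leq}$, giving $y_0=0$ and $y_k=1$. I would also record that the sequence increases: if $(\tfrac{i+1}{k},y_{i+1})\in\alg{R}$ then $(\tfrac{i}{k},y_{i+1}) = (\tfrac{i+1}{k},y_{i+1})\wedge(\tfrac{i}{k},1)\in\alg{R}$, using $\lhd{\mid}_{\alg{S}}\subseteq\alg{R}$ from Lemma~\ref{lem:minimalrelation}, whence $y_i\leq y_{i+1}$; this ensures the output of (1) is exactly the kind of data assumed in (2).

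For part (2) the forward implication is trivial, since a subalgebra is closed under $\odot$ and $\oplus$ and each $(\tfrac{i}{k},y_i)$ lies in $\alg{R}$. The substance is the converse. My first step is to replace the union by the pointwise membership test
\[
(\tfrac{a}{k},y')\in\alg{R} \iff y_a\leq y',
\]
valid for $(\tfrac{a}{k},y')\in\alg{S}$: the pair lies in some $C_{(\frac{i}{k},y_i),\alg{S}}$ iff some $i\geq a$ has $y_i\leq y'$, and since the $y_i$ increase this holds iff $y_a\leq y'$. With this description, containment of the constants is clear ($y_0=0$, $y_k=1$), and closure under $\wedge$ and $\vee$ is a short monotonicity check: for $a\leq b$ one has $y_a\leq y'$ and $y_a\leq y_b\leq z'$, so $y_a\leq\min(y',z')$, and dually for $\vee$. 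Crucially, none of this uses the hypothesis; it holds for any increasing sequence.

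The crux, and the only place the $\odot/\oplus$ conditions enter, is closure under $\odot$ and $\oplus$, which I would reduce from arbitrary elements to the generators $(\tfrac{i}{k},y_i)$ by monotonicity. Taking $(\tfrac{a}{k},y'),(\tfrac{b}{k},z')\in\alg{R}$, so $y'\geq y_a$ and $z'\geq y_b$, the coordinatewise action of $\odot$ gives first coordinate $\tfrac{a}{k}\odot\tfrac{b}{k}=\tfrac{c}{k}$ with $c=\max(0,a+b-k)$, independent of $y',z'$, while order-preservation of $\odot$ gives $y'\odot z'\geq y_a\odot y_b$ in the second. By the membership test, the product lies in $\alg{R}$ as soon as $(\tfrac{a}{k},y_a)\odot(\tfrac{b}{k},y_b)\in\alg{R}$, and the identical reduction handles $\oplus$. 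It then remains to check the generator products for all $a,b\in\{0,\dots,k\}$; the boundary indices are free, since $y_0=0$ and $y_k=1$ give $(0,0)\odot(\tfrac{b}{k},y_b)=(0,0)$ and $(1,1)\odot(\tfrac{b}{k},y_b)=(\tfrac{b}{k},y_b)$, both in $\alg{R}$, and symmetrically in $a$ and for $\oplus$. This leaves precisely the interior indices $a,b\in\{1,\dots,k-1\}$, which are exactly the stated hypotheses. The main obstacle is getting this monotonicity reduction right, that is, recognising that only the generators, and among those only the interior ones, carry nontrivial information, which is precisely why the condition is imposed only on $i,j\in\{1,\dots,k-1\}$.
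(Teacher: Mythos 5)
Your proof is correct and follows essentially the same route as the paper's: part (1) is the paper's argument verbatim (one inclusion via Lemma~\ref{lem:SubalgebraClosedUnderRectangles}, the other via minimality of the $y_i$), and in part (2) you reduce closure under $\odot$ and $\oplus$ to the corner elements $(\tfrac{i}{k},y_i)$ by monotonicity, exactly as the paper does. Your pointwise membership test $(\tfrac{a}{k},y')\in\alg{R}\Leftrightarrow y_a\leq y'$ is a clean repackaging of the union of rectangles that shortens the paper's four-case analysis for $\wedge$ and $\vee$, and your explicit treatment of the boundary indices $i,j\in\{0,k\}$ and of the monotonicity of the sequence produced in part (1) fills in details the paper leaves implicit.
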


\begin{proof}
(1): By Lemma~\ref{lem:SubalgebraClosedUnderRectangles} we have $\bigcup_{i = 0}^{k} C_{(\frac{i}{k},y_i),\alg{S}} \subseteq \alg{R}$. Conversely, if $(\frac{i}{k}, y) \in \alg{R}$ then $y_i \leq y$ by minimality of $y_i$ and therefore $(\frac{i}{k}, y) \in C_{(\frac{i}{k},y_i),\alg{S}}$. 

(2): Clearly these conditions are necessary for $\alg{R}$ to be a subalgebra. We show that they are also sufficient. So, supposing these condition hold, we want to show that $\alg{R}$ is indeed a subalgebra. First note that $\lhd{\mid}_\alg{S} = C_{(0,0),\alg{S}} \cup C_{(1,1),\alg{S}} \subseteq \alg{R}$, in particular this implies that both constants $(0,0)$ and $(1,1)$ are contained in $\alg{R}$. Now let $(x,y)$ and $(x',y')$ be two elements of $\alg{R}$, say $(x,y) \in C_{(\frac{i}{k}, y_i),\alg{S}}$ and $(x',y') \in C_{(\frac{j}{k}, y_j),\alg{S}}$. Furthermore, without loss of generality we assume $i \leq j$. 

We first establish the closure under the lattice operations. To show closure under meets, we note that 
\[
(x,y) \wedge (x',y') = \begin{cases} 
(x,y) & \text{if } x \leq x', y\leq y', \\
(x',y') & \text{if } x' \leq x, y' \leq y, \\
(x',y) & \text {if } x' \leq x, y \leq y', \\
(x,y') & \text{if } x \leq x', y' \leq y.
\end{cases}
\] 
In the first two cases the meet is obviously still in $\alg{R}$. In the third case the two inequalities $x' \leq x \leq \frac{i}{k}$ and $y_i \leq y$ imply $(x',y) \in C_{(\frac{i}{k}, y_i),\alg{S}}$. In the fourth and final case the two inequalities $x \leq x' \leq \frac{j}{k}$ and $y_j \leq y'$ imply $(x,y')\in C_{(\frac{j}{k}, y_j),\alg{S}}$. Closure under joins is established analogously since 
\[
(x,y) \vee (x',y') = \begin{cases} 
(x,y) & \text{if } x' \leq x, y' \leq y, \\
(x',y') & \text{if } x \leq x', y \leq y', \\
(x',y) & \text {if } x \leq x', y' \leq y, \\
(x,y') & \text{if } x' \leq x, y \leq y'.
\end{cases}
\]
Note that in the third case we get $(x',y) \in C_{(\frac{j}{k}, y_j),\alg{S}}$ and in the fourth case we get $(x,y')\in C_{(\frac{i}{k}, y_i),\alg{S}}$.

Now let $\ast \in \{ \odot, \oplus \}$, and note that $(x,y) \in C_{(\frac{i}{k}, y_i),\alg{S}}$ and $(x',y') \in C_{(\frac{j}{k}, y_j),\alg{S}}$ together with monotonicity of $\ast$ imply 
\[
x \ast x' \leq \tfrac{i}{k} \ast \tfrac{j}{k} \text{ and } y_i \ast y_j \leq y \ast y'.
\]
However, by assumption we have $(\frac{i}{k}\ast \frac{j}{k}, y_i \ast y_j) \in \alg{R}$ say it is contained in $C_{(\frac{h}{k}, y_h),\alg{S}}$. Thus 
\[
x \ast x' \leq \tfrac{i}{k} \ast \tfrac{j}{k} \leq \tfrac{h}{k} \text{ and } y_h \leq  y_i \ast y_j \leq y \ast y'
\]
immediately implies that $(x,y) \ast (x',y')$ is also contained in $C_{(\frac{h}{k}, y_h),\alg{S}}$, finishing the proof.  
\end{proof}

For example, in Figure~\ref{fig:2}, on the left hand side the union 
\[
C_{(0,0)} \cup C_{(\frac{1}{6},\frac{2}{6})} \cup C_{(\frac{2}{6},\frac{3}{6})} \cup C_{(\frac{3}{6},\frac{5}{6})} \cup C_{(\frac{4}{6},1)} \cup C_{(\frac{5}{6},1)}\cup C_{(1,1)}
\] 
inside $\PL_6\times \PL_6$ is depicted. By Proposition~\ref{prop:subalgebrasAsUnionsOfRectangles}, we can easily confirm that this is a subalgebra by checking that the `corner elements' $(\frac{1}{6},\frac{2}{6}), (\frac{2}{6},\frac{3}{6})$ and $(\frac{3}{6},\frac{5}{6})$ are closed under the opertaions $\odot$ and $\oplus$. On the right hand side of Figure~\ref{fig:2}, the union 
\[
C_{(0,0)} \cup C_{(\frac{1}{6},\frac{2}{6})} \cup C_{(\frac{2}{6},\frac{3}{6})} \cup C_{(\frac{3}{6},1)} \cup C_{(\frac{4}{6},1)} \cup C_{(\frac{5}{6},1)}\cup C_{(1,1)}
\]  
is depicted. This is not a subalgebra because $(\frac{1}{6},\frac{2}{6}) \oplus (\frac{2}{6},\frac{3}{6}) = (\frac{3}{6}, \frac{5}{6})$ is not contained in this union.  
\begin{figure}[ht]
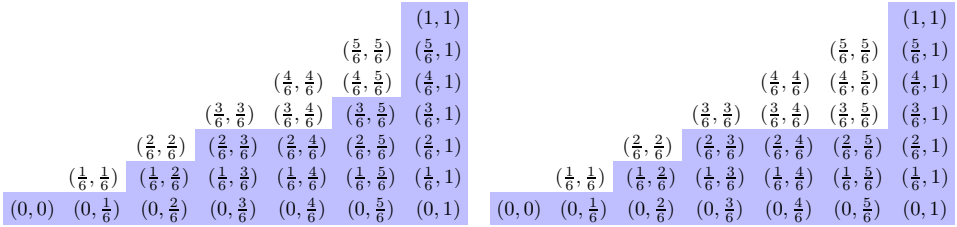

\begin{adjustbox}{width=\columnwidth,center}
\begin{tabular}{c c c}
\begin{tabular}{c c c c c c c}
&  &  &  &  &  & \cellcolor{blue!25} $(1,1)$ \\

&  &  &  &  & $(\tfrac{5}{6},\tfrac{5}{6})$ & \cellcolor{blue!25} $(\tfrac{5}{6},1)$ \\

&  &  &  & $(\tfrac{4}{6},\tfrac{4}{6})$ & $(\tfrac{4}{6},\tfrac{5}{6})$ & \cellcolor{blue!25} $(\tfrac{4}{6},1)$ \\

&  & & $(\tfrac{3}{6},\tfrac{3}{6})$ & $(\tfrac{3}{6},\tfrac{4}{6})$ & \cellcolor{blue!25} $(\tfrac{3}{6},\tfrac{5}{6})$ & \cellcolor{blue!25} $(\tfrac{3}{6},1)$ \\

& & $(\tfrac{2}{6},\tfrac{2}{6})$ & \cellcolor{blue!25} $(\tfrac{2}{6},\tfrac{3}{6})$ & \cellcolor{blue!25} $(\tfrac{2}{6},\tfrac{4}{6})$ & \cellcolor{blue!25} $(\tfrac{2}{6},\tfrac{5}{6})$ & \cellcolor{blue!25} $(\tfrac{2}{6},1)$ \\

& $(\tfrac{1}{6},\tfrac{1}{6})$ & \cellcolor{blue!25} $(\tfrac{1}{6},\tfrac{2}{6})$ & \cellcolor{blue!25} $(\tfrac{1}{6},\tfrac{3}{6})$ & \cellcolor{blue!25} $(\tfrac{1}{6},\tfrac{4}{6})$ & \cellcolor{blue!25} $(\tfrac{1}{6},\tfrac{5}{6})$ & \cellcolor{blue!25} $(\tfrac{1}{6},1)$ \\
 
 $\cellcolor{blue!25} (0,0)$ & \cellcolor{blue!25} $(0,\tfrac{1}{6})$ & \cellcolor{blue!25} $(0,\tfrac{2}{6})$ & \cellcolor{blue!25} $(0,\tfrac{3}{6})$ & \cellcolor{blue!25} $(0,\tfrac{4}{6})$ & \cellcolor{blue!25} $(0,\tfrac{5}{6})$ & \cellcolor{blue!25} $(0,1)$ \\ 
\end{tabular}
& &
\begin{tabular}{c c c c c c c}
&  &  &  &  &  & \cellcolor{blue!25} $(1,1)$ \\

&  &  &  &  & $(\tfrac{5}{6},\tfrac{5}{6})$ & \cellcolor{blue!25} $(\tfrac{5}{6},1)$ \\

&  &  &  & $(\tfrac{4}{6},\tfrac{4}{6})$ & $(\tfrac{4}{6},\tfrac{5}{6})$ & \cellcolor{blue!25} $(\tfrac{4}{6},1)$ \\

&  & & $(\tfrac{3}{6},\tfrac{3}{6})$ & $(\tfrac{3}{6},\tfrac{4}{6})$ & $(\tfrac{3}{6},\tfrac{5}{6})$ & \cellcolor{blue!25} $(\tfrac{3}{6},1)$ \\

& & $(\tfrac{2}{6},\tfrac{2}{6})$ & \cellcolor{blue!25} $(\tfrac{2}{6},\tfrac{3}{6})$ & \cellcolor{blue!25} $(\tfrac{2}{6},\tfrac{4}{6})$ & \cellcolor{blue!25} $(\tfrac{2}{6},\tfrac{5}{6})$ & \cellcolor{blue!25} $(\tfrac{2}{6},1)$ \\

& $(\tfrac{1}{6},\tfrac{1}{6})$ & \cellcolor{blue!25} $(\tfrac{1}{6},\tfrac{2}{6})$ & \cellcolor{blue!25} $(\tfrac{1}{6},\tfrac{3}{6})$ & \cellcolor{blue!25} $(\tfrac{1}{6},\tfrac{4}{6})$ & \cellcolor{blue!25} $(\tfrac{1}{6},\tfrac{5}{6})$ & \cellcolor{blue!25} $(\tfrac{1}{6},1)$ \\
 
 $\cellcolor{blue!25} (0,0)$ & \cellcolor{blue!25} $(0,\tfrac{1}{6})$ & \cellcolor{blue!25} $(0,\tfrac{2}{6})$ & \cellcolor{blue!25} $(0,\tfrac{3}{6})$ & \cellcolor{blue!25} $(0,\tfrac{4}{6})$ & \cellcolor{blue!25} $(0,\tfrac{5}{6})$ & \cellcolor{blue!25} $(0,1)$ \\ 
\end{tabular} \\
\end{tabular}
\end{adjustbox}
\caption{Only the subset on the left is a subalgebra of $\PL_6\times\PL_6$.}
\label{fig:2}
\end{figure}

Now that we have a good grasp on the subalgebras of $\PL_n \times \PL_n$, we aim to show that, ultimately, only subdirect products $\alg{R} \subseteq \PL_n \times \PL_n$ (meaning $\pr_1(\alg{R}) = \pr_2(\alg{R}) = \PL_n$) will be relevant for the natural duality. By Lemma~\ref{lem:minimalrelation}, this is equivalent to saying only the following relations will be relevant to the natural duality.  

\begin{definition}\label{def:SetOfRelevantRelations}
Let $\mathcal{S}_n \subseteq \mathbb{S}(\PL_n\times \PL_n)$ be the collection of all subalgebras $\alg{R} \subseteq \PL_n \times \PL_n$ which satisfy $\lhd  \subseteq \alg{R} \subseteq {\leq}$.  
\end{definition}

It is clear by definition that $\mathcal{S}_n$ is a bounded sublattice of $\mathbb{S}(\PL_n \times \PL_n)$ with lower bound $\lhd$ and upper bound $\leq$. 

In the next two (technical) lemmas, we show that the set of relations $\mathcal{S}_n$ strongly entails $\mathbb{S}(\leq)$ and $\mathbb{S}(\PL_n)$. The first lemma shows that relations $\alg{R} \in \mathbb{S}(\leq)$ with $\pr_1(\alg{R}) \times \pr_2(\alg{R}) \neq \PL_n \times \PL_n$ are strongly entailed by $\mathbb{S}(\PL_n)$ and $\mathcal{S}_n$.     

\begin{lemma}\label{lem:SubdirectProductsSuffice}
Let $\alg{R}\subseteq \PL_n\times \PL_n$ be a subalgebra of the order $\leq$, which is not the diagonal of a subalgebra of $\PL_n$, and let $\alg{S} = \pr_1(\alg{R}) \times \pr_2(\alg{R}) = \PL_k \times \PL_{k'}$ for some divisors $k, k'$ of $n$. Then there exists a subalgebra $\overline{\alg{R}} \in \mathcal{S}_n$ with $\alg{R} = \overline{\alg{R}} \cap \alg{S}$. 
\end{lemma}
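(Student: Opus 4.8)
The plan is to build $\overline{\alg{R}}$ explicitly as a ``full-height'' staircase in $\PL_n \times \PL_n$ that extends the staircase of $\alg{R}$. First I would invoke Proposition~\ref{prop:subalgebrasAsUnionsOfRectangles}(1) to write
\[
\alg{R} = \bigcup_{i=0}^{k} C_{(\frac{i}{k},\, y_i),\alg{S}},
\]
where $0 = y_0 \leq y_1 \leq \dots \leq y_k = 1$ lie in $\PL_{k'}$ and $\frac{i}{k} \leq y_i$; since $\alg{R}$ is a subalgebra, Proposition~\ref{prop:subalgebrasAsUnionsOfRectangles}(2) guarantees that the ``corners'' $(\frac{i}{k}, y_i)$ are closed under $\odot$ and $\oplus$. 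Writing $n = k\ell$, I introduce two monotone maps: let $\rho \colon \PL_n \to \PL_k$ send $x$ to the least element of $\PL_k$ above $x$ (so $\rho(\frac{a}{n}) = \frac{\lceil a/\ell \rceil}{k}$), and let $\sigma \colon \PL_k \to \PL_{k'}$ send $\frac{i}{k}$ to $y_i$. I then set $g = \sigma \circ \rho \colon \PL_n \to \PL_n$ and define
\[
\overline{\alg{R}} = \bigcup_{x \in \PL_n} C_{(x,\, g(x))} = \{ (u,v) \in \PL_n \times \PL_n \mid g(u) \leq v \}.
\]

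The elementary properties are then immediate. Since $\sigma(c) \geq c$ for all $c \in \PL_k$ (this is exactly $\frac{i}{k} \leq y_i$) and $\rho(x) \geq x$, the map $g$ is monotone with $g(x) \geq x$, and $g(0) = 0$, $g(1) = 1$, $g(\frac{i}{k}) = y_i$. From $g(x) \geq x$ I obtain $\overline{\alg{R}} \subseteq {\leq}$, and from $g(0) = 0$ together with $g(x) \leq 1$ I obtain $\lhd \subseteq \overline{\alg{R}}$, so $\overline{\alg{R}} \in \mathcal{S}_n$ as soon as I know it is a subalgebra. For the required identity, intersecting with $\alg{S} = \PL_k \times \PL_{k'}$ restricts attention to $u = \frac{i}{k} \in \PL_k$, where the fibre of $\overline{\alg{R}}$ is $\{ v \in \PL_{k'} \mid g(\frac{i}{k}) \leq v \} = \{ v \in \PL_{k'} \mid y_i \leq v \}$, which is precisely the fibre of $\alg{R}$ by Proposition~\ref{prop:subalgebrasAsUnionsOfRectangles}(1); hence $\overline{\alg{R}} \cap \alg{S} = \alg{R}$.

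The substance of the argument, and the step I expect to be the main obstacle, is showing that $\overline{\alg{R}}$ is a subalgebra. By Proposition~\ref{prop:subalgebrasAsUnionsOfRectangles}(2), applied with $\PL_n \times \PL_n$ in place of $\alg{S}$, this amounts to the corners $(\frac{a}{n}, g(\frac{a}{n}))$ being closed under $\odot$ and $\oplus$, which unravels to the two ``laxness'' inequalities $g(u \odot u') \leq g(u) \odot g(u')$ and $g(u \oplus u') \leq g(u) \oplus g(u')$ for all $u,u' \in \PL_n$ (closure under $\wedge, \vee$ is automatic since $\PL_n$ is a chain and $g$ is monotone). I would prove these by factoring through $\rho$ and $\sigma$: the corner-closure of $\alg{R}$ says exactly that $\sigma$ is lax, i.e. $\sigma(c \odot c') \leq \sigma(c) \odot \sigma(c')$ and dually for $\oplus$, while the laxness of $\rho$ is a purely arithmetic fact about ceilings. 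For $\odot$, writing $u = \frac{a}{n}$, $u' = \frac{b}{n}$, it reduces to $\lceil \max(0, a+b-n)/\ell \rceil \leq \max(0,\, \lceil a/\ell \rceil + \lceil b/\ell \rceil - k)$, which holds because $\lceil a/\ell \rceil + \lceil b/\ell \rceil - k$ is an integer lying above $(a+b-n)/\ell$; for $\oplus$ one uses the subadditivity $\lceil (a+b)/\ell \rceil \leq \lceil a/\ell \rceil + \lceil b/\ell \rceil$ together with $\lceil n/\ell \rceil = k$. Composing the two lax monotone maps, $g(u \odot u') = \sigma(\rho(u \odot u')) \leq \sigma(\rho(u) \odot \rho(u')) \leq \sigma(\rho(u)) \odot \sigma(\rho(u')) = g(u) \odot g(u')$, and symmetrically for $\oplus$, yields the laxness of $g$ and completes the proof.
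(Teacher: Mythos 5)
Your proof is correct and is essentially the proof in the paper: your $\overline{\alg{R}} = \{(u,v) \in \PL_n \times \PL_n \mid g(u) \leq v\}$, with $g = \sigma \circ \rho$, is precisely the paper's staircase $\bigcup_{j=0}^{n} C_{(\frac{j}{n},\hat{y}_j)}$ with $\hat{y}_j = y_{\lceil j/\ell\rceil} = g(\tfrac{j}{n})$, and both arguments establish the subalgebra property through Proposition~\ref{prop:subalgebrasAsUnionsOfRectangles}(2) using the corner-closure of $\alg{R}$ (your laxness of $\sigma$). The only divergence is presentational: where you prove laxness of $\rho$ by ceiling arithmetic, the paper bypasses this step by placing $(\frac{j_1}{n},\hat{y}_{j_1}) \ast (\frac{j_2}{n},\hat{y}_{j_2})$ directly into the rectangle $C_{(\frac{h\ell}{n},\hat{y}_{h\ell})}$ via monotonicity of $\ast$ --- and indeed your arithmetic inequality also follows without computation, since $\rho(u)\ast\rho(u')$ lies in $\PL_k$ and dominates $u \ast u'$, hence dominates $\rho(u\ast u')$ by the least-element property defining $\rho$.
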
 
\begin{proof}
By Proposition~\ref{prop:subalgebrasAsUnionsOfRectangles}(1), we know that $\alg{R}$ can be expressed as union
\[
\alg{R} = \bigcup_{i = 0}^{k} C_{(\frac{i}{k},y_i),\alg{S}}
\]
where $y_i$ is the minimal element of $\PL_{k'}$ with $(\frac{i}{k}, y_i) \in \alg{R}$. Let $n = k\cdot\ell$. We define $\overline{\alg{R}}$ by 
\[
\overline{\alg{R}} = \bigcup_{j = 0}^n C_{(\frac{j}{n}, \hat{y}_j)}
\]
where we stipulate $\hat{y}_0 = 0$ and 
\[
\hat{y}_j = \begin{cases}
y_1 & \text{if } 1 \leq j \leq \ell, \\
y_2 & \text{if } \ell +1 \leq j \leq 2\ell, \\
\vdots & \vdots \\ 
y_{k-1} & \text{if } (k-2)\ell + 1 \leq j \leq (k-1)\ell, \\
y_k = 1 & \text{if } (k-1)\ell + 1 \leq j \leq n.   
\end{cases}
\] 
We show that $\overline{\alg{R}}$ is a subalgebra of $\PL_n \times \PL_n$ using Proposition~\ref{prop:subalgebrasAsUnionsOfRectangles}(2). That is, for any $j_1, j_2 \in \{ 1, \dots, n-1 \}$, we want to show that $(\frac{j_1}{n}, \hat{y}_{j_1}) \ast (\frac{j_2}{n}, \hat{y}_{j_2}) \in \overline{\alg{R}}$ holds for the $\MV$-operations $\ast \in \{ \odot, \oplus \}$. 

Let $i_1,i_2 \in \{ 1,\dots, k\}$ be the unique elements satisfying 
\[
(i_1 - 1)\ell < j_1 \leq  i_1\ell \text{ and } (i_2 - 1)\ell < j_2 \leq  i_2\ell,
\]
which by definition means $\hat{y}_{j_1} = y_{i_1}$ and $\hat{y}_{j_2} = y_{i_2}$. Since $\alg{R}$ is a subalgebra, we know that $(\frac{i_1}{k}, y_{i_1}) \ast (\frac{i_2}{k}, y_{i_2}) \in \alg{R}$, say it is in $C_{(\frac{h}{k}, y_h),\alg{S}}$. Now because $\frac{j_1}{n} \leq \frac{i_1\ell}{n} = \frac{i_1}{k}$ and similarly for $j_2, i_2$, we have 
\[ 
\frac{j_1}{n} \ast \frac{j_2}{n} \leq \frac{i_1}{k} \ast \frac{i_2}{k} \leq \frac{h}{k} = \frac{h\ell}{n} \]
and furthermore  
\[
y_h \leq y_{i_1} \ast y_{i_2} = \hat{y}_{j_1} \ast \hat{y}_{j_2}.
\]
Because $\hat{y}_{h\ell} = y_h$, this shows that $(\frac{j_1}{n}, \hat{y}_{j_1}) \ast (\frac{j_2}{n}, \hat{y}_{j_2}) \in C_{(\frac{h\ell}{n}, \hat{y}_{h\ell})} \subseteq \overline{\alg{R}}$, finishing the proof.          
\end{proof} 

Our second lemma shows that the collection $\mathbb{S}(\PL_n)$ is strongly entailed by $\mathcal{S}_n$. 
\begin{lemma}\label{lem:SubdirectProductsEntailSubalgebras}
For every $\PL_k \in \mathbb{S}(\PL_n)$, there exists a $\alg{R} \in \mathcal{S}_n$ such that $\alg{R} \cap \Delta_{\PL_n} = \Delta_{\PL_k}$ (where $\Delta_{\alg{A}}$ denotes the diagonal of the corresponding algebra $\alg{A}$).   
\end{lemma}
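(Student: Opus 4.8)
The plan is to exhibit a single relation $\alg{R} \in \mathcal{S}_n$ explicitly and then read off its intersection with the diagonal. Write $n = k\cdot\ell$, so that by Proposition~\ref{prop:subalgebrasOfPLn} the subalgebra $\PL_k$ consists precisely of the multiples of $\frac{\ell}{n}$. Let $\sigma\colon \PL_n \to \PL_n$ be the map sending $x$ to the least element of $\PL_k$ lying above it,
\[
\sigma(x) = \min\{ z \in \PL_k \mid x \leq z \},
\]
which is well-defined since $1 \in \PL_k$. I then define
\[
\alg{R} = \{ (x,y) \in \PL_n \times \PL_n \mid \sigma(x) \leq y \}
\]
and claim that this $\alg{R}$ does the job.

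First I would record the easy containments placing $\alg{R}$ in $\mathcal{S}_n$. Since $x \leq \sigma(x)$ always holds, $\sigma(x) \leq y$ forces $x \leq y$, so $\alg{R} \subseteq {\leq}$; and since $\sigma(0) = 0$ while $\sigma(x) \leq 1$ for every $x$, every pair of the form $(0,y)$ or $(x,1)$ lies in $\alg{R}$, giving $\lhd \subseteq \alg{R}$. Because $\lhd \subseteq \alg{R}$ automatically makes $\alg{R}$ a subdirect product, it will remain only to check that $\alg{R}$ is a subalgebra in order to conclude $\alg{R} \in \mathcal{S}_n$.

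The main step is this subalgebra verification, and the key point is that $\sigma$ is monotone, satisfies $x \leq \sigma(x)$, and takes values in the subalgebra $\PL_k$. Fix $\ast \in \{\odot, \oplus, \wedge, \vee\}$ and $(x_1,y_1),(x_2,y_2) \in \alg{R}$. Monotonicity of $\ast$ together with $\sigma(x_i) \leq y_i$ gives $\sigma(x_1) \ast \sigma(x_2) \leq y_1 \ast y_2$. On the other hand, $\sigma(x_1)\ast\sigma(x_2)$ lies in $\PL_k$ (as $\PL_k$ is closed under $\ast$) and dominates $x_1 \ast x_2$ (again by monotonicity, since $x_i \leq \sigma(x_i)$); hence by minimality in the definition of $\sigma(x_1 \ast x_2)$ we obtain
\[
\sigma(x_1 \ast x_2) \leq \sigma(x_1)\ast\sigma(x_2) \leq y_1 \ast y_2,
\]
so $(x_1,y_1)\ast(x_2,y_2) \in \alg{R}$; closure under the constants $0,1$ is immediate. (Equivalently, one may present $\alg{R}$ as the union $\bigcup_{j=0}^{n} C_{(\frac{j}{n},\,\sigma(\frac{j}{n}))}$ and invoke Proposition~\ref{prop:subalgebrasAsUnionsOfRectangles}(2); the displayed inequality is exactly the verification of its hypotheses.)

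Finally I would compute the diagonal intersection. A pair $(x,x)$ belongs to $\alg{R}$ iff $\sigma(x) \leq x$, which, since $x \leq \sigma(x)$ always, holds iff $\sigma(x) = x$, i.e. iff $x \in \PL_k$. Therefore $\alg{R} \cap \Delta_{\PL_n} = \Delta_{\PL_k}$, as required. I do not anticipate any serious obstacle: the only delicate point is the subalgebra check, and it collapses to the single observation that rounding up into a subalgebra commutes past the operations in the one-sided inequality above.
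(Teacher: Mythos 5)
Your proposal is correct, and in fact the relation you construct is \emph{exactly} the one in the paper: the paper's proof defines $\alg{R} = \bigcup_{i=0}^{n} C_{(\frac{i}{n},y_i)}$ where $y_i$ is the least multiple of $\frac{\ell}{n}$ above $\frac{i}{n}$, i.e.\ $y_i = \sigma(\frac{i}{n})$ in your notation, and by monotonicity of $\sigma$ this union is precisely $\{(x,y) \mid \sigma(x) \leq y\}$. Where you differ is in the verification that $\alg{R}$ is a subalgebra. The paper invokes Proposition~\ref{prop:subalgebrasAsUnionsOfRectangles}(2) and carries out the corner-element check by bookkeeping with indices ($i_1,i_2$ located in blocks $(j_1-1)\ell < i_1 \leq j_1\ell$, the element $h$ with $\frac{j_1\ell}{n}\ast\frac{j_2\ell}{n} = \frac{h\ell}{n}$, and so on), thereby staying inside the union-of-rectangles framework it set up and later reuses in the appendix algorithm. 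You instead verify closure directly from three properties of the round-up map $\sigma$ --- it is monotone, inflationary ($x \leq \sigma(x)$), and valued in the subalgebra $\PL_k$ --- from which minimality gives the one-sided ``sub-multiplicativity'' $\sigma(x_1 \ast x_2) \leq \sigma(x_1)\ast\sigma(x_2) \leq y_1 \ast y_2$ for every monotone operation $\ast$. This is a genuinely cleaner route: it avoids the index bookkeeping entirely, handles all four binary operations and the constants uniformly, does not depend on Proposition~\ref{prop:subalgebrasAsUnionsOfRectangles} at all, and isolates the conceptual reason the construction works (rounding up into a subalgebra interacts correctly with monotone operations); the paper's version, by contrast, buys consistency with the general description of subalgebras of $\leq$ that drives the rest of Section~\ref{subsec:NaturalDualitiesPosMV}. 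Your computation of the diagonal intersection ($\sigma(x) \leq x$ iff $x \in \PL_k$) is also correct and slightly more direct than the paper's ``clear by definition.''
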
 
\begin{proof}
Let $n = k \cdot \ell$ and $\PL_k$ be given as in Proposition~\ref{prop:subalgebrasOfPLn}. We define $\alg{R}$ by 
\[
\alg{R} = \bigcup_{i = 0}^n C_{(\frac{i}{n}, y_i)}
\]
where we stipulate $y_0 = 0$ and
\[
y_i = \begin{cases}
\frac{\ell}{n} & \text{if } 1 \leq j \leq \ell, \\
\frac{2\ell}{n} & \text{if } \ell +1 \leq j \leq 2\ell, \\
\vdots & \vdots \\ 
\frac{(k-1)\ell}{n} & \text{if } (k-2)\ell + 1 \leq j \leq (k-1)\ell, \\
1 & \text{if } (k-1)\ell + 1 \leq j \leq n.   
\end{cases}
\]  
By definition it is clear that $\alg{R} \cap \Delta_{\PL_n} = \Delta_{\PL_k}$, so we only have to show that $\alg{R}$ is a subalgebra. For this, we again use Proposition\ref{prop:subalgebrasAsUnionsOfRectangles}(2). Let $i_1, i_2 \in \{ 1, \dots, n-1 \}$ and let $j_1, j_2$ be the unique elements of $\{ 1, \dots, k\}$ with 
\[
(j_1 - 1)\ell < i_1 \leq  j_1\ell \text{ and } (j_2 - 1)\ell < i_2 \leq  j_2\ell,
\]    
which means that $y_{i_1} = \frac{j_1 \ell}{n}$ and $y_{i_2} = \frac{j_2 \ell}{n}$. Furthermore, let $\frac{j_1\ell}{n}\ast \frac{j_2\ell}{n} = \frac{h \ell}{n}$ (note that such an $h$ exists because $\PL_k$ is a subalgebra). Then 
\[
\frac{i_1}{n} \ast \frac{i_2}{n} \leq \frac{j_1 \ell}{n} \ast \frac{j_2 \ell}{n} = \frac{h\ell}{n}
\] 
implies 
\[
(\tfrac{i_1}{n}, y_{i_2}) \ast (\tfrac{i_2}{n}, y_{i_2}) \in C_{(\frac{h\ell}{n}, \frac{h\ell}{n})} \subseteq \alg{R}, 
\] 
which finishes the proof.
\end{proof}

With these two lemmas at hand, we are ready to state and easily prove the main theorem of this section.      

\begin{theorem}\label{thm:NaturalDualityPositiveMVn}
Let $n\geq 1$. The discrete relational structure 
\[
\utilde{\PL}_n = \langle \{ 0, \tfrac{1}{n},\dots, \tfrac{n-1}{n}, 1\}, \mathcal{S}_n, \mathcal{T}_{\mathrm{dis}} \rangle
\]
yields a strong duality for $\PMV_n$. 
\end{theorem}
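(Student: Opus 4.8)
The plan is to exploit the entailment machinery assembled in the preceding lemmas and show that $\mathcal{S}_n$ strongly entails the alter ego of Equation~(\ref{eq:2}), which we already know yields a strong duality on $\PMV_n$. Since strong entailment preserves strong dualities (and is therefore transitive), it suffices to check that every relation in $\mathbb{S}(\PL_n) \cup \mathbb{S}(\leq)$ is strongly entailed by $\mathcal{S}_n$, invoking the admissible constructs (1)--(5) from Subsection~\ref{subsec:NaturalDualities}.

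First I would recover the unary relations $\mathbb{S}(\PL_n)$. Fix a subalgebra $\PL_k \subseteq \PL_n$. By Lemma~\ref{lem:SubdirectProductsEntailSubalgebras} there is some $\alg{R} \in \mathcal{S}_n$ with $\alg{R} \cap \Delta_{\PL_n} = \Delta_{\PL_k}$. Construct~(1) guarantees that $\Delta_{\PL_n}$ is strongly entailed by any set of relations, and construct~(3) closes strong entailment under intersections, so $\Delta_{\PL_k}$ is strongly entailed by $\mathcal{S}_n$. Applying construct~(2) to $\alg{R}$ then yields $\pi_1(\alg{R} \cap \Delta_{\PL_n}) = \pi_1(\Delta_{\PL_k}) = \PL_k$, so the unary relation $\PL_k$ is strongly entailed as well. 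Hence $\mathcal{S}_n$ strongly entails all of $\mathbb{S}(\PL_n)$.

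Next I would treat the binary relations of $\mathbb{S}(\leq)$ by the trichotomy coming from Lemma~\ref{lem:minimalrelation}. Diagonals $\Delta_{\PL_k} \in \mathbb{S}(\leq)$ have already been handled in the previous step. If $\alg{R} \subseteq {\leq}$ is a non-diagonal subalgebra that is subdirect, i.e.\ $\pr_1(\alg{R}) = \pr_2(\alg{R}) = \PL_n$, then Lemma~\ref{lem:minimalrelation} forces $\lhd \subseteq \alg{R} \subseteq {\leq}$, so $\alg{R}$ is literally a member of $\mathcal{S}_n$ and there is nothing to prove. Finally, if $\alg{R} \subseteq {\leq}$ is a non-diagonal subalgebra with $\alg{S} = \pr_1(\alg{R}) \times \pr_2(\alg{R}) = \PL_k \times \PL_{k'} \neq \PL_n \times \PL_n$, then Lemma~\ref{lem:SubdirectProductsSuffice} provides some $\overline{\alg{R}} \in \mathcal{S}_n$ with $\alg{R} = \overline{\alg{R}} \cap \alg{S}$. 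The product $\alg{S}$ is strongly entailed by the unary relations $\PL_k, \PL_{k'}$ via construct~(4), and those are strongly entailed by $\mathcal{S}_n$ by the previous paragraph; intersecting with $\overline{\alg{R}} \in \mathcal{S}_n$ via construct~(3) shows that $\alg{R}$ is strongly entailed by $\mathcal{S}_n$.

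Putting the two paragraphs together, $\mathcal{S}_n$ strongly entails $\mathbb{S}(\PL_n) \cup \mathbb{S}(\leq)$, hence the alter ego of Equation~(\ref{eq:2}), which yields a strong duality on $\PMV_n$; therefore $\utilde{\PL}_n$ does as well. I do not expect a genuine obstacle here: all the substantive content already lives in Lemmas~\ref{lem:SubdirectProductsSuffice} and~\ref{lem:SubdirectProductsEntailSubalgebras}, and what remains is bookkeeping over the entailment constructs. The one place demanding a little care is the recovery of the \emph{unary} relations $\PL_k$: these cannot be produced directly from $\mathcal{S}_n$, and must instead be extracted from a suitable $\alg{R} \in \mathcal{S}_n$ through the $\pi_1(\alg{R} \cap \Delta_{\PL_n})$ construct~(2).
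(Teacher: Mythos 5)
Your proposal is correct and takes essentially the same route as the paper's proof: starting from the strong duality given by the structure in Equation~(\ref{eq:2}) and showing that $\mathcal{S}_n$ strongly entails $\mathbb{S}(\PL_n) \cup \mathbb{S}(\leq)$ by combining Lemmas~\ref{lem:SubdirectProductsSuffice} and~\ref{lem:SubdirectProductsEntailSubalgebras} with the admissible entailment constructs. Your write-up is in fact somewhat more explicit than the paper's (which leaves the trichotomy on $\mathbb{S}(\leq)$ and the use of construct~(2) to extract the unary relations $\PL_k$ implicit), but the mathematical content is identical.
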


\begin{proof}
By the discussion after Lemma~\ref{lem:SubalgebrasOforder}, we know that the structure given in Equation~(\ref{eq:2}), that is $\langle \{ 0, \tfrac{1}{n},\dots, \tfrac{n-1}{n},1 \}, \mathbb{S}(\PL_n) \cup \mathbb{S}(\leq), \mathcal{T}_{\mathrm{dis}} \rangle$, yields a strong duality for $\PMV_n$. By Lemma~\ref{lem:SubdirectProductsSuffice}, we know that every $\alg{R} \in \mathbb{S}(\leq)$ is an intersection of (and thus strongly entailed by) a product of subalgebras of $\PL_n$ and a relation from $\mathcal{S}_n$. By Lemma~\ref{lem:SubdirectProductsEntailSubalgebras} subalgebras of $\PL_n$ are strongly entailed by $\mathcal{S}_n$ as well.     
\end{proof}    

In light of Proposition~\ref{prop:subalgebrasAsUnionsOfRectangles}, it is fairly straightforward to find the lattice $\mathcal{S}_n$ in a systematic way. Indeed, in Appendix~\ref{appendix} we provide an easy algorithm to compute this lattice. Also note that, to obtain an \emph{optimal duality} (see \cite[Chapters 8 and 9]{ClarkDavey1998}), we could simplify the above structure further by only including meet-irreducible elements of $\mathcal{S}_n$ (this follows from \cite[Theorem 9.2.6]{ClarkDavey1998}). However, since it won't make a significant difference in this paper, we keep working with the alter ego from Theorem~\ref{thm:NaturalDualityPositiveMVn}.       

\begin{definition}\label{def:DualCategories}
For all $n\geq 1$, let $\var{X}_n$ be the topological quasi-variety $\tqvariety{\utilde{\PL}_n}$ generated by the structure from Theorem~\ref{thm:NaturalDualityPositiveMVn}. Furthermore, let $\func{D}_n \colon \PMV_n \to \var{X}_n$ and $\func{E}_n\colon \var{X}_n \to \PMV_n$ be the (hom-)functors establishing the corresponding dual equivalence. 
\end{definition}
Note that these dualities can be seen as many-valued generalizations of Priestley duality, which is recovered in the case where $n=1$.  

In the following, we collect some consequences of Theorem~\ref{thm:NaturalDualityPositiveMVn} which can be immediately derived from the general theory of natural dualities.
\begin{corollary}\label{cor:ConsequencesOfNatDuality}
The categories $\PMV_n$ and $\var{X}_n$ have the following properties.  
\begin{enumerate}
\item $\PL_n$ is injective in $\PMV_n$ and $\utilde{\PL}_n$ is injective in $\var{X}_n$.    
\item The injectives in $\PMV_n$ are exactly the Boolean powers $\PL_n[\alg{B}]$, where $\alg{B}$ is a non-trivial complete Boolean algebra.
\item $\PMV_n$ has the amalgamation property. 
\item A morphism $\varphi \colon \alg{X}_1 \to \alg{X}_2$ in $\var{X}_n$ is an embedding (a surjection) if and only if $\func{E}_n (\varphi)$ is a surjection (an embedding). A homomorphism $h \colon \alg{A}_1 \to \alg{A}_2$ in $\PMV_n$ is an embedding (a surjection) if and only if $\func{D}_n (h)$ is a surjection (an embedding).
\item The congruence lattice of $\alg{A} \in \PMV_n$ is dually isomorphic to the lattice of closed substructures of $\func{D}_n(\alg{A})$. 
\item Coproducts in $\var{X}_n$ are given by direct union (\emph{i.e.}, the duality is \emph{logarithmic}).           
\end{enumerate}
\end{corollary}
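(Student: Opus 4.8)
The plan is to derive all six items from the strong duality of Theorem~\ref{thm:NaturalDualityPositiveMVn} together with the standard dictionary of natural duality theory \cite{ClarkDavey1998}, so that for each item the work reduces to naming the right general result and checking its hypotheses for $\utilde{\PL}_n$. Item (1) is the foundation. That $\utilde{\PL}_n$ is injective in $\var{X}_n$ is part of the definition of a strong duality, hence immediate. For $\PL_n$, I would compute $\func{D}_n(\PL_n) = \PMV_n(\PL_n,\PL_n)$, which by Lemma~\ref{lem:partialHomomorphismsAreIdentities} (with $k=n$) consists only of the identity and is therefore the one-element structure $\mathbf 1 \in \var{X}_n$. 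Since surjections in $\var{X}_n$ are onto on points, $\mathbf 1$ is projective; as $\func{D}_n(\PL_n)=\mathbf 1$ is projective and the contravariant equivalence $\func{D}_n,\func{E}_n$ interchanges injectives and projectives, $\PL_n = \func{E}_n(\mathbf 1)$ is injective in $\PMV_n$.

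Items (4) and (5) are the usual morphism and congruence dictionary. For (4), one direction is formal: if $h$ is a surjection then $\func{D}_n(h)$ is injective on points (precomposition with an epimorphism) and is a structure embedding because the duality is strong. The reverse direction uses (1): if $h\colon \alg{A}_1 \hookrightarrow \alg{A}_2$ is an embedding, then injectivity of $\PL_n$ lets every $u\colon \alg{A}_1 \to \PL_n$ factor through $h$, so $\func{D}_n(h)$ is onto on points, hence a surjection. The statements for $\func{E}_n$ are symmetric, using injectivity of $\utilde{\PL}_n$. Item (5) is then immediate: congruences of $\alg{A}$ correspond bijectively and order-reversingly to surjective images of $\alg{A}$, which by (4) correspond to embeddings into $\func{D}_n(\alg{A})$, that is, to closed substructures.

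Items (3) and (6) I would handle inside the dual category. For amalgamation, I would dualise a span of embeddings $\alg{A} \hookleftarrow \alg{C} \hookrightarrow \alg{B}$ to a cospan of surjections in $\var{X}_n$ using (4), form the set-theoretic pullback (a closed substructure of the product, hence again an object of $\var{X}_n$), observe that its projections are surjective because the original maps are, and dualise back with $\func{E}_n$ to obtain the amalgam. For (6), I would invoke the criterion in \cite{ClarkDavey1998} for a duality to be logarithmic; since $\utilde{\PL}_n$ is purely relational and every relation in $\mathcal{S}_n$ contains the diagonal, the coproduct of objects of $\var{X}_n$ is computed as their direct union, which is exactly the logarithmic property.

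The main obstacle is item (2), the explicit description of the injectives. The route is: by the contravariant equivalence, $\alg{A}$ is injective in $\PMV_n$ if and only if $\func{D}_n(\alg{A})$ is projective in $\var{X}_n$, so everything reduces to identifying the projective objects of $\var{X}_n$. The expectation is that, because $\utilde{\PL}_n$ is projective (being the dual of the injective $\PL_n$) and coproducts are direct unions by (6), the projectives are precisely the retracts of direct unions of copies of $\utilde{\PL}_n$; since every relation in $\mathcal{S}_n$ contains the diagonal, such objects carry only the forced, essentially trivial, structure and are therefore determined by their underlying Stone space, which must be \emph{extremally disconnected} (the projective objects among Stone spaces, by Gleason's theorem). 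The final and most delicate step is to match these with Boolean powers: I would show that $\func{D}_n(\PL_n[\alg{B}])$ has underlying space the Stone space $\Stone(\alg{B})$ with trivial $\mathcal{S}_n$-structure, that $\Stone(\alg{B})$ is extremally disconnected exactly when $\alg{B}$ is complete, and that non-emptiness corresponds to non-triviality of $\alg{B}$. Identifying the class of projective structures with the duals of Boolean powers $\PL_n[\alg{B}]$ for complete $\alg{B}$, in direct analogy with the $\MV_n$ case of \cite{KurzPoigerTeheux2023}, is where the real content lies, and I expect the careful computation of $\func{D}_n$ on Boolean powers to be the crux.
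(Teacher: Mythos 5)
Your items (1), (3), (4) and (5) are essentially correct: they reprove by hand the general facts that the paper obtains by citing Clark--Davey (Lemma 3.2.10 for injectivity of $\PL_n$, Lemma 5.3.4 for amalgamation, Lemmas 3.2.6 and 3.2.8 for item (4), Theorem 3.2.1 for item (5)). One small repair is needed in (1): on the one-point structure $\func{D}_n(\PL_n)$ every relation of $\mathcal{S}_n$ other than $\leq$ is \emph{empty} (precisely because $\Delta_{\PL_n}\not\subseteq\lhd$), so to conclude projectivity you must also note that $\leq^{\alg{X}}$ is reflexive in every $\alg{X}\in\var{X}_n$, which is what makes an arbitrary choice of preimage point a morphism.

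The genuine problems are in (2) and (6), and both stem from your claim that ``every relation in $\mathcal{S}_n$ contains the diagonal.'' This is false for $n\geq 2$: the relation $\lhd=\{(x,y)\mid x=0 \text{ or } y=1\}$ belongs to $\mathcal{S}_n$, and $(\tfrac{1}{n},\tfrac{1}{n})\notin\lhd$. The fact that actually does the work---and which the paper verifies when invoking Clark--Davey's Theorem 5.5.15 for (2) and Theorem 6.3.3 for (6)---is the sandwich $\lhd\subseteq\alg{R}\subseteq{\leq}$: every $\alg{R}\in\mathcal{S}_n$ contains $(0,0)$ and $(1,1)$ but \emph{avoids binary products}, i.e.\ contains no set $\alg{S}_1\times\alg{S}_2$ with $\alg{S}_1,\alg{S}_2$ subalgebras of $\PL_n$, since any such set contains $(1,0)\notin{\leq}$. ``Purely relational'' alone is not what makes direct unions work; what one needs is exactly this product-avoidance (concretely: cross pairs in a direct union are separated because the constant maps at $0$ and at $1$ are morphisms, while $(1,0)$ lies in no $\alg{R}\in\mathcal{S}_n$).

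For (2) your route breaks down at the first step. $\utilde{\PL}_n$ is \emph{not} ``the dual of the injective $\PL_n$'': as you yourself computed in (1), $\func{D}_n(\PL_n)$ is the one-point structure, whereas $\utilde{\PL}_n$ is the dual of the free algebra on one generator, and it is in general not projective. Already for $n=1$ the two-element chain is not projective in $\Priest$: the two-element antichain surjects onto it, and the identity does not lift, since a lift would have to be order-preserving. Consequently the characterization ``projectives $=$ retracts of direct unions of copies of $\utilde{\PL}_n$'' fails as well (note also that infinite direct unions are not objects of $\var{X}_n$, not being compact, so such retracts cannot account for duals of infinite Boolean powers). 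You correctly anticipate the final picture---duals of Boolean powers carry the discrete order, all other relations empty, and completeness of $\alg{B}$ corresponds to extremal disconnectedness---but the bridge from ``projective in $\var{X}_n$'' to that picture is precisely what is missing; in the paper it is supplied in one line by Clark--Davey's Theorem 5.5.15, whose hypothesis is the product-avoidance property above. As written, your item (2) is a plan resting on a false premise rather than a proof.
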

\begin{proof}
The second part of statement (1) follows from the definition of strong duality, the first part follows from \cite[Lemma 3.2.10]{ClarkDavey1998} and the fact that $\utilde{\PL}_n$ is a total structure. Statement (2) follows from \cite[Theorem 5.5.15]{ClarkDavey1998} because all relations $\alg{R} \in \mathcal{S}_n$ avoid binary products. Statement (3) follows from \cite[Lemma 5.3.4]{ClarkDavey1998}. Statement (4) follows from (1) and \cite[Lemmas 3.2.6 and 3.2.8]{ClarkDavey1998}. Statement (5) follows from \cite[Theorem 3.2.1]{ClarkDavey1998}. Lastly, statement (6) follows from \cite[Theorem 6.3.3]{ClarkDavey1998}.   
\end{proof}    

This corollary already demonstrates how useful these dualities are. In the next section, we investigate it further to derive more results about the varieties $\PMV_n$. 

\section{Further explorations of the dualities}\label{sec:FurtherExplorations} 
In this section, we delve deeper into various aspects of the natural dualities established in the previous section. In Subsection~\ref{subsec:ThreeElementChain}, we give a concrete axiomatization of the category $\var{X}_2$ dual to the variety $\PMV_2$ generated by the three-element positive $\MV$-chain. In Subsection~\ref{subsec:RelationshipPriestley}, we explore the relationship between the natural duality for $\PMV_n$ and Priestley duality. Lastly, in Subsection~\ref{subsec:A-E-ClosedAlgebras}, we give complete characterizations of algebraically and existentially closed algebras in $\PMV_n$.      
\subsection{The dual category for the three-element positive MV-chain.}\label{subsec:ThreeElementChain}
Among the finitely-valued \L ukasiewicz logics, arguably the most popular is the three-valued logic corresponding to the variety $\MV_2$ generated by the three-element $\MV$-chain $\lucas_2$. In this section, we focus on the variety $\PMV_2$ generated by the positive three-element $\MV$-chain $\PL_2$. More specifically, we provide an explicit description of the category $\var{X}_2$ dual to $\PMV_2$.   
\begin{theorem}\label{thm:AxiomatizationPMV2}
A structured Stone space $\alg{X} = \langle X, \lhd^\alg{X}, \leq^\alg{X},\mathcal{T}\rangle$ with binary relations $\lhd^\alg{X}$ and $\leq^\alg{X}$ closed in $X^2$ is a member of $\var{X}_2$ if and only if it satisfies the following axioms. 
\begin{enumerate}[(a)]
\item $x \lhd^\alg{X} y \Rightarrow x \leq^\alg{X} y$.
\item $(X,\leq^\alg{X}, \mathcal{T})$ is a Priestley space, that is, $\leq^\alg{X}$ is a partial order and 
if $x \not\leq^\alg{X} y$, then there exists a clopen upset $U$ containing $x$ but not $y$. 
\item If $x \ntriangleleft^\alg{X} y$ but $x \leq^\alg{X} y$, then there exist a clopen upset $U$ and a  clopen downset $D$ with the following properties 
\begin{itemize}
\item $x \notin D$ and $y\notin U$,  
\item For all $z,z' \in X$, if $z \lhd^\alg{X} z'$ then $z \in D$ or $z'\in U$. 
\end{itemize}   
\end{enumerate}
\end{theorem}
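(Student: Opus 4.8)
The plan is to apply the Preservation and Separation Theorems~\cite[Theorem 1.4.3]{ClarkDavey1998} to the concrete structure $\utilde{\PL}_2$. The first step is to compute $\mathcal{S}_2$ explicitly: since ${\leq}{\setminus}{\lhd} = \{(\tfrac{1}{2},\tfrac{1}{2})\}$ is a single pair and both $\lhd$ and $\leq$ are subalgebras, Definition~\ref{def:SetOfRelevantRelations} gives $\mathcal{S}_2 = \{\lhd, \leq\}$, so that $\utilde{\PL}_2 = \langle \{0,\tfrac{1}{2},1\}, \lhd, \leq, \mathcal{T}_{\mathrm{dis}}\rangle$. Consequently, a structured Stone space $\alg{X}$ of this type (with $\lhd^\alg{X}, \leq^\alg{X}$ closed) lies in $\var{X}_2$ if and only if the evaluation map $\varepsilon_\alg{X}$ is an embedding, that is, the morphisms in $\var{X}_2(\alg{X}, \utilde{\PL}_2)$ separate the points of $X$ and reflect both relations. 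The key bookkeeping device is the observation that a morphism $\alpha \colon \alg{X} \to \utilde{\PL}_2$ is precisely a pair $(U_1, D_0)$, where $U_1 = \alpha^{-1}(1)$ is a clopen $\leq^\alg{X}$-upset and $D_0 = \alpha^{-1}(0)$ is a disjoint clopen $\leq^\alg{X}$-downset, subject to the constraint that $z \lhd^\alg{X} z'$ implies $z \in D_0$ or $z' \in U_1$ (this is exactly preservation of $\lhd$).

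For the forward implication I would verify that the three axioms hold in every power $\utilde{\PL}_2^I$ and are inherited by closed substructures, whose relations are the restrictions of the coordinatewise relations. Axiom~(a) is immediate from ${\lhd}\subseteq{\leq}$ in $\utilde{\PL}_2$. For~(b), the chain $0 < \tfrac{1}{2} < 1$ is a Priestley space, and products and closed subspaces of Priestley spaces are Priestley, so $(X, \leq^\alg{X}, \mathcal{T})$ is a Priestley space. For~(c), given $x \leq^\alg{X} y$ with $x \ntriangleleft^\alg{X} y$, there is a coordinate $i_0$ with $x_{i_0} = y_{i_0} = \tfrac{1}{2}$; the restricted projection $\alpha = \pr_{i_0}{\mid}_\alg{X}$ is a morphism, and $U = \alpha^{-1}(1)$, $D = \alpha^{-1}(0)$ provide the required clopen upset and downset, the covering condition being exactly preservation of $\lhd$ by $\alpha$.

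The converse is the substantial direction and amounts to producing enough morphisms. The crucial first remark is that, by axiom~(a), every clopen $\leq^\alg{X}$-upset $U$ is automatically closed upward under $\lhd^\alg{X}$, so its characteristic function (the pair $(U, X{\setminus}U)$) is a $\{0,1\}$-valued morphism. Using axiom~(b), Priestley separation then yields, for any $x \not\leq^\alg{X} y$, a morphism with $\alpha(x) = 1$ and $\alpha(y) = 0$; this simultaneously separates points (as $\leq^\alg{X}$ is a partial order), reflects $\leq^\alg{X}$, and reflects $\lhd^\alg{X}$ in the case $x \not\leq^\alg{X} y$ (since $(1,0)\notin{\lhd}$). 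The one remaining case is $x \leq^\alg{X} y$ together with $x \ntriangleleft^\alg{X} y$, where I would invoke axiom~(c) to obtain $U$ and $D$ and then build the morphism $(U_1, D_0) = (U{\setminus}D, D)$. One checks that $x, y \notin U_1 \cup D_0$ (using that $U$ is an upset, $D$ a downset, and $x \leq^\alg{X} y$), so that $\alpha(x) = \alpha(y) = \tfrac{1}{2}$ and hence $(\alpha(x), \alpha(y)) = (\tfrac{1}{2},\tfrac{1}{2}) \notin {\lhd}$, as desired.

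I expect the main obstacle to be precisely this last construction: axiom~(c) does not guarantee that $U$ and $D$ are disjoint, so $(U, D)$ need not define a morphism directly. Replacing $U$ by $U{\setminus}D$ restores disjointness, but one must recheck the covering condition $z \lhd^\alg{X} z' \Rightarrow z \in D$ or $z' \in U{\setminus}D$. This is where axiom~(a) is used a second time: if $z \lhd^\alg{X} z'$ and $z' \in U \cap D$, then $z \leq^\alg{X} z'$ and hence $z \in D$ because $D$ is a downset, so the covering condition survives the replacement. Once all morphisms are in place, $\varepsilon_\alg{X}$ is an embedding and the Separation Theorem gives $\alg{X} \in \var{X}_2$.
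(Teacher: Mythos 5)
Your proposal is correct and follows essentially the same route as the paper: the forward direction is the Preservation/Separation Theorem content (your coordinatewise verification on powers and closed substructures is just that argument unwound), and your converse uses exactly the paper's two constructions — characteristic functions of clopen upsets for $x \not\leq^\alg{X} y$, and the replacement $U' = U{\setminus}D$ with the covering condition rechecked via axiom~(a) for the case $x \leq^\alg{X} y$, $x \ntriangleleft^\alg{X} y$. The only additions are cosmetic but welcome: you make explicit that $\mathcal{S}_2 = \{\lhd, \leq\}$ (which the paper leaves implicit in the theorem statement) and you isolate the clean bookkeeping fact that morphisms $\alg{X} \to \utilde{\PL}_2$ are exactly disjoint pairs $(U_1, D_0)$ of a clopen upset and a clopen downset satisfying the covering condition.
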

\begin{proof}
First we show that every member $\alg{X} = \langle X,\lhd^\alg{X},\leq^\alg{X},\mathcal{T} \rangle$ of $\var{X}_2$ satisfies (a)-(c). The formula (a) is quasi-atomic and holds in $\PL_n$, therefore, by the Preservation Theorem \cite[Theorem 1.4.3]{ClarkDavey1998}, it also holds for all members of $\var{X}_2$.  

To see condition (b) that $(X,\leq^\alg{X}, \mathcal{T})$ is a Priestley space, assume that $x\not\leq\alg{X} y$. By the Separation Theorem \cite[Theorem 1.4.4]{ClarkDavey1998}, there exists a $\var{X}_2$-morphism $\varphi \colon \alg{X} \to \utilde{\PL}_2$ with $\varphi(x) > \varphi(y)$. If $\varphi(x) = 1$, choose $U = \varphi^{-1}(\{ 1 \})$ and if $\varphi(x) = \frac{1}{2}$, choose $U = \varphi^{-1}{\{\frac{1}{2}\}} \cup \varphi^{-1}(\{ 1 \})$. In both cases, $U$ is a clopen (because $\PL_2$ carries the discrete topology and $\varphi$ is continuous) upset (because $\varphi$ is order-preserving) which contains $x$ but not $y$. 

To see (c), assume $x \ntriangleleft^\alg{X} y$ but $x\leq^\alg{X} y$. Then, again by the Separation Theorem, there exists a morphism $\varphi\colon \alg{X} \to \utilde{\PL}_2$ with $\varphi(x) \ntriangleleft \varphi(y)$ but $\varphi(x) \leq \varphi(y)$. Since $\lhd = {\leq}{\setminus}\{(\frac{1}{2},\frac{1}{2})\}$, this implies $\varphi(x) = \varphi(y) = \frac{1}{2}$. The clopen upset $U = \varphi^{-1}( \{ 1 \})$ and the clopen downset $D = \varphi^{-1}(\{ 0 \})$ satisfy the two subconditions of (c), the first one since $\varphi(x) = \varphi(y) = \frac{1}{2}$ and the second one since $z \lhd\alg{X} z'$ and $\varphi(z) =  \varphi(z') = \frac{1}{2}$ would yield a contradiction $\varphi(z) \ntriangleleft \varphi(z')$ to $\varphi$ being a morphism.   

For the converse, assuming that $\alg{X} = (X,\lhd^\alg{X}, \leq^\alg{X},\mathcal{T})$ satisfies (a)-(c), we want to show that it is a member of $\var{X}_2$. We apply the Separation Theorem again. 

Suppose $x \not\leq^\alg{X} y$. Using that $(X,\leq^\alg{X}, \mathcal{T})$ is a Priestley space, we can find a clopen upset $U$ which contains $x$ but not $y$. We define a continuous map $\varphi \colon X \to \{ 0, \frac{1}{2}, 1\}$ by $\varphi(z) = 1$ if $z \in U$ and $f(z) = 0$ otherwise. This clearly is order-preserving, and it also preserves $\lhd$, because $\lhd$ is a subset of $\leq$ by (a) and, in $\utilde{\PL}_2$ the relations $\lhd$ and $\leq$ coincide on the subset $\{ 0,1 \}$. Clearly this morphism satisfies $\varphi(x) \not\leq \varphi(y)$. 

In particular, the above covers the case where $x \neq y$ and the case where $x \ntriangleleft^\alg{X} y$ and $x \not\leq^\alg{X} y$ hold. Now assume $x \ntriangleleft^\alg{X} {y}$ but $x \leq^\alg{X} y$. Take a clopen upset $U$ and a clopen downset $D$ as given in (c). Replacing $U$ by the clopen upset $U' := U{\setminus}D$, the properties of (c) are still satisfied, since $z \lhd^\alg{X} z'$ and $z\notin D$ imply $z'\in U$, and $z'\in D$ would yield the contradiction $z \in D$, so $z' \in U'$. Let the continuous map $\varphi\colon X \to \{0 ,\frac{1}{2}, 1\}$ be defined via
\[
\varphi(z) = \begin{cases}
0 & \text{if } z \in D, \\
1 & \text{if } z\in U',\\
\frac{1}{2} & \text{if } z \in X{\setminus}(D \cup U').
\end{cases}
\]    
This is a well-defined continuous map since $D$, $U'$ and $X{\setminus}(D\cup C)$ forms a clopen partition of $X$. Furthermore, since $x\notin D$ (which implies $y\notin D$) and $y\notin U$ (which implies $x\notin U$) implies $\varphi(x) = \varphi(y) = \frac{1}{2}$ (\emph{i.e.}, $\varphi(x)\ntriangleleft \varphi(y)$), it remains to be shown that $\varphi$ preserves $\leq$ and $\lhd$. Order-preservation follows immediately from the fact that $U$ is an upset and $D$ is a downset. Now suppose $z \lhd^\alg{X} z'$. Then $z \in D$, which implies $\varphi(z) = 0$ holds, or $z' \in U'$, which implies $\varphi(z') = 1$ holds. In both cases, $\varphi(z) \lhd \varphi(z')$ is assured.              
\end{proof}

In the next subsection, we give a similar but more `implicit' axiomatization of the categories $\var{X}_n$ for $n > 2$ as well. Since (as we've already seen in the case $n=2$) all structures $\alg{X}\in \var{X}_n$ have underlying Priestley spaces, we then proceed to explore various functors relating our natural dualities to Priestley duality. 
\subsection{The relationship to Priestley duality}\label{subsec:RelationshipPriestley}
We continue to denote the functors  establishing the duality from Theorem~\ref{thm:NaturalDualityPositiveMVn} by $\func{D}_n \colon \PMV_n \to \var{X}_n$ and $\func{E}_n \colon \var{X}_n \to \PMV_n$. In particular, for $n = 1$ this coincides with Priestley duality between the variety of distributive lattices $\DL = \PMV_1$ and the category of Priestley spaces $\Priest = \var{X}_1$. In this case, we simply use $\func{D}\colon \DL \to \Priest$ and $\func{E}\colon \Priest \to \DL$ instead of $\func{D}_1$ and $\func{E}_1$. 

In this subsection, we show that there are functors $\Skel\colon \PMV_n \to \DL$ taking the \emph{distributive skeleton} and $\Pow\colon \DL \to \PMV_n$ taking a \emph{Priestley power} with $\Skel$ being left-adjoint to $\Pow$. This is similar to the adjunction between the Boolean skeleton functor $\MV_n \to \BA$ and the Boolean power functor $\BA \to \MV_n$ (which exists for any variety generated by a semi-primal lattice extension) from \cite[Section 4]{KurzPoigerTeheux2023}.   

While, in theory, the Separation Theorem \cite[Theorem 1.4.3]{ClarkDavey1998} always gives an `implicit' description of the dual categories, the reader can imagine that for $n>2$, it gets increasingly complicated to come up with more `explicit' descriptions of the categories $\var{X}_n$ similar to Theorem~\ref{thm:AxiomatizationPMV2}. Therefore, in these cases we content ourselves with the following.  

\begin{proposition}\label{prop:AximoatizationDualPMVn}
A structured Stone space $\alg{X} = \langle X, (\alg{R}^\alg{X} \mid \alg{R} \in \mathcal{S}_n), \mathcal{T}\rangle$ with closed binary relations $\alg{R}^\alg{X}$ is a member of $\var{X}_n$ if and only if it satisfies the following:
\begin{enumerate}[(a)]
\item $x \alg{R}_1^\alg{X} y \Rightarrow x \alg{R}_2^\alg{X} y$ for all $\alg{R}_1 \subseteq \alg{R}_2$ in $\mathcal{S}_n$. 
\item $\langle X, \leq^\alg{X} , \mathcal{T}\rangle$ is a Priestley space.
\item For all $\alg{R} \in \mathcal{S}_n{\setminus}\{ \leq \}$, if $(x,y) \notin \alg{R}^\alg{X}$, then there is a structure-preserving continuous map $\varphi\colon \alg{X} \to \utilde{\PL}_n$ with $(\varphi(x), \varphi(y)) \notin \alg{R}$. 
\end{enumerate}  
\end{proposition}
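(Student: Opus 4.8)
The plan is to prove both directions using the Preservation Theorem and the Separation Theorem from \cite{ClarkDavey1998}, mirroring the structure of the proof of Theorem~\ref{thm:AxiomatizationPMV2} but at the level of generality where the relations are an arbitrary family $(\alg{R}^\alg{X} \mid \alg{R}\in \mathcal{S}_n)$. For the forward direction, I would assume $\alg{X}\in \var{X}_n$, so that $\alg{X}$ is (isomorphic to) a closed substructure of a power of $\utilde{\PL}_n$, and verify (a)--(c) in turn. Condition (a) is the assertion that the containment $\alg{R}_1\subseteq\alg{R}_2$ holding in $\utilde{\PL}_n$ transfers to every member of $\var{X}_n$; since this is a quasi-atomic formula (an implication between two atomic relational formulas) valid in $\utilde{\PL}_n$, the Preservation Theorem \cite[Theorem 1.4.3]{ClarkDavey1998} gives it immediately. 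Condition (b), that $\langle X,\leq^\alg{X},\mathcal{T}\rangle$ is a Priestley space, follows because $\leq\,\in\mathcal{S}_n$ is a partial order on $\PL_n$ and the Separation Theorem supplies, for any $x\not\leq^\alg{X} y$, a morphism $\varphi\colon\alg{X}\to\utilde{\PL}_n$ with $\varphi(x)\not\leq\varphi(y)$; pulling back the clopen upset of elements above $\varphi(x)$ in $\PL_n$ (which is clopen by discreteness and an upset by order-preservation) yields the required clopen upset separating $x$ from $y$, and antisymmetry follows since $\leq$ is antisymmetric on $\PL_n$ and morphisms into $\utilde{\PL}_n$ jointly separate points. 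Condition (c) is then \emph{precisely} the statement of the Separation Theorem specialized to each relation $\alg{R}\in\mathcal{S}_n\setminus\{\leq\}$: whenever $(x,y)\notin\alg{R}^\alg{X}$, there is a morphism $\varphi$ with $(\varphi(x),\varphi(y))\notin\alg{R}$.

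For the converse, I would assume $\alg{X}$ satisfies (a)--(c) and show it lies in $\var{X}_n$ by verifying the hypotheses of the Separation Theorem \cite[Theorem 1.4.4]{ClarkDavey1998}: I must produce, for every pair $(x,y)$ and every relation $\alg{R}\in\mathcal{S}_n$ with $(x,y)\notin\alg{R}^\alg{X}$, a structure-preserving continuous map $\varphi\colon\alg{X}\to\utilde{\PL}_n$ with $(\varphi(x),\varphi(y))\notin\alg{R}$, together with the analogous separation of distinct points. The key observation is that $\leq$ is the largest element of the lattice $\mathcal{S}_n$, so the only relation for which (c) is silent is $\leq$ itself; but separation against $\leq$ is handled by (b). Concretely, if $x\neq y$ then since $\leq^\alg{X}$ is a partial order (by (b)) we have $x\not\leq^\alg{X} y$ or $y\not\leq^\alg{X} x$, and the Priestley separation property of (b) furnishes a clopen upset, hence (as in the proof of Theorem~\ref{thm:AxiomatizationPMV2}) a morphism into $\utilde{\PL}_n$ separating the two points. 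For any other relation $\alg{R}\neq\leq$, condition (c) directly provides the required morphism. Thus the hypotheses of the Separation Theorem are met and $\alg{X}\in\var{X}_n$.

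The main subtlety — and the one point requiring care rather than a direct appeal — is ensuring that the separating map obtained from (b) in the point-separation case genuinely preserves \emph{all} the relations in $\mathcal{S}_n$, not merely $\leq$. The map $\varphi$ built from a clopen upset $U$ by sending $U$ to $1$ and its complement to $0$ takes values in $\{0,1\}$, and on the two-element subalgebra $\{0,1\}=\PL_1\subseteq\PL_n$ every relation $\alg{R}\in\mathcal{S}_n$ restricts to the full order there (since $\lhd\subseteq\alg{R}\subseteq\leq$ and $\lhd$ and $\leq$ agree on $\{0,1\}$, both being $\{(0,0),(0,1),(1,1)\}$). Hence such a $\{0,1\}$-valued order-preserving map automatically preserves every relation in $\mathcal{S}_n$, exactly as argued for $\lhd$ in the proof of Theorem~\ref{thm:AxiomatizationPMV2}. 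With this observation in place the two-valued separating maps are legitimate $\var{X}_n$-morphisms, and the verification via the Separation Theorem goes through. I expect (a) and (c) to be essentially immediate from the Preservation and Separation Theorems respectively, with (b) and the point-separation step of the converse being where the small amount of genuine work lies.
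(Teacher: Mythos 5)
Your proposal is correct and follows essentially the same route as the paper: condition (a) via the Preservation Theorem, conditions (b) and (c) via the Separation Theorem, and the converse by verifying the Separation Theorem's hypotheses. The paper dismisses the converse as ``straightforward''; your write-up supplies the one detail it omits, namely that a $\{0,1\}$-valued map built from a clopen upset preserves every $\alg{R}\in\mathcal{S}_n$ because $\lhd\subseteq\alg{R}\subseteq\leq$ all agree on $\{0,1\}$ (together with condition (a)), which is exactly the argument used in the proof of Theorem~\ref{thm:AxiomatizationPMV2}.
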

\begin{proof}
Every member $\alg{X}$ of $\var{X}_n$ satisfies the quasi-atomic formulas from (a). Furthermore, both (b) and (c) are immediate consequences of the Separation Theorem. To see that $\langle X, \leq^\alg{X}, \mathcal{T} \rangle$ is a Priestley space), assume $x \not\leq^{\alg{X}} y$. By the Separation Theorem there is a morphism $\varphi \colon \alg{X} \to \utilde{\PL}_n$ with $\varphi(x) \not\leq \varphi(y)$. Let $\varphi(x) = \frac{i}{n}$. Then $U = \varphi^{-1}(\{\frac{1}{n}\}) \cup \varphi^{-1}(\{\frac{i+1}{n}\}) \cup \dots \cup \varphi^{-1}(\{\frac{n-1}{n}\}) \cup \varphi^{-1}(\{1\})$ is a clopen upset which contains $x$ but not $y$. The converse is also a straightforward application of the Separation Theorem. 
\end{proof}

Therefore, there always is a forgetful functor $\U\colon \var{X}_n \to \Priest$ sending an object of $\var{X}_n$ to its underlying Priestley space and a $\var{X}_n$-morphism to itself. In the following, we show that the dual of $\U$ is given by the \emph{distributive skeleton functor} $\Skel\colon \PMV_n \to \DL$. This is similar to the \emph{Boolean skeleton functor} $\MV_n \to \BA$, which is dual to the corresponding forgetful functor from the category dual to $\MV_n$ to $\Stone$ \cite[Subsection 4.2]{KurzPoigerTeheux2023}.
The distributive skeleton of a $\PMV_n$ algebra is defined completely analogous to the Boolean skeleton of an $\MV_n$ algebra (see, \emph{e.g.}, \cite[Section 1.5]{Cignoli2000}. 
\begin{definition}\label{def:DistributiveSkeleton}
Let $\alg{A} \in \PMV_n$. The \emph{distributive skeleton} of $\alg{A}$ is the bounded distributive lattice 
\[
\Skel({\alg{A}}) = \langle \Skel(A), \wedge, \vee, 0, 1 \rangle 
\]
defined on the carrier set $\Skel(A) = \{ a\in A \mid a \oplus a = a\}$, with the operations $\wedge, \vee$ and constants $0,1$ inherited from $\alg{A}$.   
\end{definition}
To turn this into a functor $\Skel\colon \PMV_n\to\DL$, for a homomorphism $h \colon \alg{A} \to \alg{A}'$ between $\PMV_n$-algebras, simply define the homomorphism $\Skel h \colon \Skel(\alg{A}) \to \Skel(\alg{A}')$ by restriction $\Skel h = h{\mid}_{\Skel(\alg{A})}$.    

\begin{theorem}\label{thm:SkeletonDualForgetful}
The functor $\Skel\colon \PMV_n \to \DL$ is dual to the functor $\U\colon \var{X}_n \to \Priest$, that is, $\func{D}\Skel$ is naturally isomorphic to $\U\func{D}_n$.  
\end{theorem}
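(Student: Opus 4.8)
The goal is to establish a natural isomorphism $\func{D}\Skel \cong \U\func{D}_n$ between two functors $\PMV_n \to \Priest$. My plan is to unwind both composite functors explicitly on an object $\alg{A} \in \PMV_n$ and exhibit a natural bijection between the underlying sets that is simultaneously a homeomorphism and an order-isomorphism.

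First I would compute $\U\func{D}_n(\alg{A})$. By definition $\func{D}_n(\alg{A}) = \PMV_n(\alg{A}, \PL_n)$, the set of homomorphisms equipped with the relations $\alg{R}^{\func{D}_n(\alg{A})}$ inherited pointwise from $\mathcal{S}_n$ and the topology of pointwise convergence; applying $\U$ retains only the underlying Priestley space, whose order is $\leq^{\func{D}_n(\alg{A})}$ (evaluated pointwise via the relation $\leq$). On the other side, $\func{D}\Skel(\alg{A}) = \DL(\Skel(\alg{A}), \alg{2})$, the Priestley dual of the distributive skeleton, i.e.\ the set of lattice homomorphisms $\Skel(\alg{A}) \to \alg{2} = \PL_1$ with the pointwise order and topology. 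The crux is therefore to produce, for each $\alg{A}$, a bijection
\[
\Phi_\alg{A}\colon \PMV_n(\alg{A},\PL_n) \longrightarrow \DL(\Skel(\alg{A}), \alg{2}).
\]
The natural candidate sends a homomorphism $u\colon \alg{A}\to\PL_n$ to its restriction $u{\mid}_{\Skel(\alg{A})}$, observing that $u$ maps $\Skel(\alg{A})$ into $\Skel(\PL_n)$; since $\Skel(\PL_n) = \{a \mid a\oplus a = a\} = \{0,1\}$ (because $a \oplus a = a$ forces $a\in\{0,1\}$ in $\PL_n$), this restriction is exactly a lattice homomorphism $\Skel(\alg{A})\to\alg{2}$.

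The main work, and the principal obstacle, is showing $\Phi_\alg{A}$ is a \emph{bijection}. Injectivity and surjectivity both hinge on the availability of the unary terms $\tau_d$ from Lemma~\ref{lem:taus_definable}. For surjectivity, given a lattice homomorphism $v\colon \Skel(\alg{A})\to\alg{2}$, I must reconstruct a $\PMV_n$-homomorphism $u\colon\alg{A}\to\PL_n$ restricting to $v$. The idea is that each $a\in A$ is determined by the idempotents $\tau_{d}(a)\in\Skel(\alg{A})$ for $d\in\PL_n$: I would set $u(a) = \bigvee\{ d \mid v(\tau_d(a)) = 1\}$, and verify this is a homomorphism using that the $\tau_d$ are term-definable and that the MV-operations on $\PL_n$ are recoverable from the threshold data $\tau_d$. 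For injectivity, two homomorphisms agreeing on all idempotents $\tau_d(a)$ must agree on every $a$, again by the threshold representation. I expect this reconstruction — proving the candidate map is well-defined and preserves $\odot,\oplus$ — to be the technically delicate step.

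Finally I would check that $\Phi_\alg{A}$ respects order and topology, and that the family $(\Phi_\alg{A})$ is natural. Order-preservation in both directions follows because the pointwise order on $\func{D}_n(\alg{A})$ is $u\leq u'$ iff $u(a)\leq u'(a)$ for all $a$, which (via the $\tau_d$-threshold characterization) is equivalent to $u(e)\leq u'(e)$ for all idempotents $e\in\Skel(\alg{A})$, i.e.\ to $u{\mid}_{\Skel(\alg{A})} \leq u'{\mid}_{\Skel(\alg{A})}$ in $\func{D}\Skel(\alg{A})$. Continuity of $\Phi_\alg{A}$ and of its inverse is routine: both carry the topology of pointwise convergence, and restriction along the inclusion $\Skel(\alg{A})\hookrightarrow\alg{A}$ is continuous, while the reconstruction is continuous because it only involves finitely many coordinates $\tau_d(a)$. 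For naturality, given $h\colon\alg{A}\to\alg{A}'$ I would verify the square relating $\Phi$ to $\func{D}_n(h)$ and $\func{D}(\Skel h)$ commutes, which reduces to the compatibility of restriction with precomposition and the fact that $h$ commutes with every $\tau_d$. Thus $\Phi$ is the required natural isomorphism $\U\func{D}_n \cong \func{D}\Skel$.
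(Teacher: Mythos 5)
Your proposal is correct and follows essentially the same route as the paper: the same restriction map $\Phi_\alg{A}(u) = u{\mid}_{\Skel(\alg{A})}$, the same injectivity argument via the terms $\tau_d$, the same reconstruction formula $u_p(a) = \bigvee\{d \mid p(\tau_d(a)) = 1\}$ for surjectivity, and the same continuity and naturality checks. The step you flag as delicate (verifying that the reconstructed map preserves $\odot$ and $\oplus$) is indeed where the paper spends most of its effort, using equations of the form $\tau_{d_1}(x_1)\wedge\tau_{d_2}(x_2)\leq\tau_{d_1\ast d_2}(x_1\ast x_2)$ together with a minimality argument.
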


\begin{proof}
By definition, natural in the choice of $\alg{A} \in \PMV_n$, we want to find an order-preserving homeomorphism 
\[
\Phi_\alg{A}\colon (\PMV_n(\alg{A}, \PL_n), \leq) \to (\DL(\Skel(\alg{A}), \alg{2}), \leq),  
\]
where $\alg{2}$ denotes the two-element distributive lattice. We claim that 
\[ 
\Phi_\alg{A}(u) = u{\mid}_{\Skel(\alg{A})}
\]
has these desired properties.  

To see that $\Phi_\alg{A}$ is injective, suppose that $u \neq u'$ are two distinct homomorphisms $\alg{A} \to \PL_n$. Let $a\in \alg{A}$ be such that $u(a) \neq u'(a)$, without loss of generality say $u(a) < u'(a)$. Then, for $d = u'(a)$, we have $u(\tau_d(a)) = \tau_d(u(a)) = 0$ and $u'(\tau_d(a)) = \tau_d(u'(a)) = 1$. Since $\tau_d(a) \in \Skel(\alg{A})$ holds, this shows that $\Phi(u) \neq \Phi(u')$. 

Now we show that $\Phi$ is surjective. Let $p\colon \Skel(\alg{A}) \to \alg{2}$ be a homomorphism. We construct a homomorphism $u_p \colon \alg{A} \to \PL_n$ with $\Phi_\alg{A}(u_p) = p$. Given $a\in \alg{A}$, define 
\[
u_p(a) = \bigvee \{ d \mid p(\tau_d(a)) = 1\}. 
\] 
Clearly $u_p$ preserves $0$ and $1$. Now let $a_1, a_2 \in \alg{A}$, let $u_p(a_1) = d_1$ and $u_p(a_2) = d_2$. We want to show that, for $\ast \in \{ \wedge, \vee, \odot, \oplus \}$, $u_p(a_1 \ast a_2) = d_1 \ast d_2$. In other words, we want to show that
$p(\tau_{d_1 \ast d_2}(a_1 \ast a_2)) = 1$ and $p(\tau_{d'}(a_1 \ast a_2)) = 0$ for all $d' > d_1 \ast d_2$. Since $\ast$ is order-preserving we know that $\PL_n$ satisfies 
\[
\tau_{d_1}(x_1) \wedge \tau_{d_2}(x_2) \leq \tau_{d_1 \ast d_2}(x_1 \ast x_2).
\]   
Since this can be expressed as an equation, it also holds in $\alg{A}$. Therefore, we get 
\[
1 = p(\tau_{d_1}(a_1) \ast \tau_{d_2}(a_2)) \leq p(\tau_{d_1 \ast d_2}(a_1 \ast a_2)).
\]
Now let $d' > d_1 \ast d_2$. Then, since $d_1 \ast d_2 \neq 1$, we can choose minimal $d_1' > d_1$ and $d_2' \geq d_2$ with $d_1' \ast d_2' \geq d'$. By minimality, $\PL_n$ satisfies the equation corresponding to  
\[
\tau_{d_1}(x_1) \wedge \tau_{d_2}(x_2) \wedge \tau_{d'}(x_1\ast x_2) \leq \tau_{d_1'}(x_1),
\]
which is therefore also satisfied in $\alg{A}$. But now, if we assume that $p(\tau_{d'}(a_1\ast a_2)) = 1$, then 
\[
1 = p(\tau_{d_1}(a_1) \wedge \tau_{d_2}(a_2) \wedge \tau_{d'}(a_1\ast a_2)) \leq p(\tau_{d_1'}(a_1))
\]
implies $p(\tau_{d'_1}(a_1)) = 1$, which is a contradiction to $u_p(a_1) = d_1$. Therefore, $u_p$ is a homomorphism. The restriction of $u_p$ to $\Skel(\alg{A})$ is equal to $p$ because $a\in \Skel(\alg{A})$ is equivalent to $\tau_d(a) = a$ for all $d\in \PL_n{\setminus}\{ 0 \}$. 

Thus we showed that $\Phi_\alg{A}$ is bijective. It is also continuous, and therefore a homeomorphism, since a subbasis of the topology on $\func{D}_1\Skel(\alg{A})$ is given by the sets of the form $[a:e] = \{ p \colon \Skel(\alg{A}) \to \alg{2} \mid p(a) = e\}$ where $a$ ranges over $\Skel(\alg{A})$ and $e$ ranges over $\alg{2}$. The preimage $\Phi^{-1}([a:e])$ is exactly the corresponding subbase element $[a:e] = \{ h \colon \alg{A} \to \PL_n \mid h(a) = e \}$ of the topology on $\U \func{D}_n(\alg{A})$. The fact that $\Phi_\alg{A}$ is order-preserving follows directly from its definition, so it only remains to show that $\Phi$ defines a natural transformation $\U \func{D}_n \Rightarrow \func{D}_1 \Skel$. Let $h\colon \alg{A} \to \alg{A}'$ be a homomorphism. We need to show that the square 
\[
\begin{tikzcd}[row sep=4.5em,column sep=4.5em]
\PMV_n(\alg{A'}, \PL_n) \arrow[r, "\Phi_{\alg{A}'}"] \arrow[d, "\func{UD}_nh"'] & \DL(\Skel(\alg{A'}),\alg{2}) \arrow[d, "\func{D}_1\Skel h"] \\
\PMV_n(\alg{A}, \PL_n) \arrow[r, "\Phi_{\alg{A}}"'] &  \DL(\Skel(\alg{A'}),\alg{2}) 
\end{tikzcd}
\] 
commutes. By definition, for a homomorphism $u \colon \alg{A}' \to \PL_n$ we have 
\[
\Phi_\alg{A} \circ \U\func{D}_n h (u) = \Phi_\alg{A} (u \circ h) = (u \circ h){\mid}_{\Skel(\alg{A})} 
\]     
and 
\[
\func{D}_1\Skel h \circ \Phi_{\alg{A}'}(u) = \func{D_1}\Skel h (u{\mid}_\Skel{\alg{A'}}) = u{\mid}_{\Skel(\alg{A'})} \circ h{\mid}_{\Skel(\alg{A})},  
\]
which makes it easy to see that these two coincide, finishing the proof.  
\end{proof}

The Boolean skeleton functor $\MV_n \to \BA$ has a right-adjoint \cite[Subsection 4.3]{KurzPoigerTeheux2023}, which takes a Boolean algebra $\alg{B}$ to the \emph{Boolean power} $\lucas_n [\alg{B}]$ (see, \emph{e.g.}, \cite{Burris1975, BurrisSankappanavar1981} for information about Boolean powers). In the following we show that, similarly, the distributive skeleton functor has a right-adjoint, which takes the \emph{Priestley power} defined as follows.  
\begin{definition}\label{def:PriestleyPower}
Let $\alg{L} \in \DL$ be a distributive lattice and let $\alg{M}$ be a finite ordered algebra. The \emph{Priestley power}, $\alg{M}[\alg{L}]$, is given by the collection 
\[
\alg{M}[\alg{L}] = \Priest(\func{D}(\alg{L}), (M, \leq, \mathcal{T}_\mathrm{dis}))
\] of continuous order-preserving maps from the dual of $\alg{L}$ to the discrete Priestley space $(M, \leq, \mathcal{T}_\mathrm{dis})$.     
\end{definition} 
A more constructive defintion of Priestley powers is given and shown to be equivalent to the above definition in \cite{Lenkehegyi1986} (where they are called \emph{distributive extensions}). We also emphasize that our notion of Priestley power differs from the one established in \cite{Jipsen2009}.  

Similarly to the Boolean power (but with the constraint that all operations of $\alg{M}$ need to be order-preserving), we get the following. 

\begin{lemma}\label{lem:PriestleyPowerIsAlgebra}
Let $\alg{M}$ be a finite ordered algebra, all of whose operations are order-preserving. Then, for every distributive lattice $\alg{L} \in \DL$, the Priestley power $\alg{M}[\alg{L}]$ with component-wise operations is a subalgebra of $\alg{M}^{\func{D}(\alg{L})}$. 
\end{lemma}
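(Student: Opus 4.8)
The plan is to show directly that $\alg{M}[\alg{L}]$ is a \emph{subuniverse} of the power algebra $\alg{M}^{\func{D}(\alg{L})}$, i.e.\ that the set of continuous order-preserving maps is closed under the component-wise operations and contains the interpretations of all constants. Write $X$ for the underlying compact ordered set of the Priestley space $\func{D}(\alg{L})$, so that $\alg{M}^{\func{D}(\alg{L})}$ is just $\alg{M}^X$, the algebra of \emph{all} functions $X \to M$ with operations computed pointwise, and $\alg{M}[\alg{L}] \subseteq \alg{M}^X$ is the subset of those functions that are both continuous (into the discrete space $(M, \mathcal{T}_{\mathrm{dis}})$) and order-preserving. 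The two things to check, for each basic $k$-ary operation $f$ of $\alg{M}$ and each choice $g_1, \dots, g_k \in \alg{M}[\alg{L}]$, are that the map $h := f^{\alg{M}^X}(g_1, \dots, g_k)$ given by $h(x) = f^{\alg{M}}(g_1(x), \dots, g_k(x))$ is again order-preserving and continuous.

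For order-preservation I would argue as follows. Suppose $x \leq y$ in $X$. Since each $g_i$ is order-preserving we have $g_i(x) \leq g_i(y)$ in $M$, and hence $(g_1(x), \dots, g_k(x)) \leq (g_1(y), \dots, g_k(y))$ in the product order on $M^k$. Because, by hypothesis, every operation of $\alg{M}$ is order-preserving, applying $f^{\alg{M}}$ preserves this inequality, giving $h(x) = f^{\alg{M}}(g_1(x), \dots, g_k(x)) \leq f^{\alg{M}}(g_1(y), \dots, g_k(y)) = h(y)$. Thus $h$ is order-preserving. This is the one step that genuinely uses the monotonicity assumption on $\alg{M}$, and it is precisely the place where the Priestley-power construction departs from the Boolean-power construction (where no order is present).

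Continuity is then routine and mirrors the Boolean-power case. Since $M$ is finite and discrete, the tuple map $\langle g_1, \dots, g_k \rangle \colon X \to M^k$ is continuous (its components are, and $M^k$ carries the product, hence discrete, topology), and $f^{\alg{M}} \colon M^k \to M$ is automatically continuous because its domain is discrete; so $h = f^{\alg{M}} \circ \langle g_1, \dots, g_k\rangle$ is continuous. Concretely, one may note that
\[
h^{-1}(\{m\}) = \bigcup \bigl\{\, g_1^{-1}(\{m_1\}) \cap \dots \cap g_k^{-1}(\{m_k\}) \mid f^{\alg{M}}(m_1, \dots, m_k) = m \,\bigr\}
\]
is a finite union of finite intersections of clopen sets, hence clopen. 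Finally, for a nullary operation (constant) $c$ of $\alg{M}$, the corresponding element of $\alg{M}^X$ is the constant map with value $c$, which is trivially continuous and order-preserving, hence lies in $\alg{M}[\alg{L}]$. Having verified closure under all basic operations and containment of all constants, I conclude that $\alg{M}[\alg{L}]$ is a subalgebra of $\alg{M}^{\func{D}(\alg{L})}$. I expect no serious obstacle: the content of the lemma is exactly that order-preservation of the operations of $\alg{M}$ is what is needed to keep the component-wise operations inside the order-preserving maps.
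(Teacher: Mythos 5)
Your proposal is correct and follows essentially the same route as the paper's proof: order-preservation via monotonicity of the operations of $\alg{M}$ applied componentwise, and continuity by exhibiting $h^{-1}(\{m\})$ as a finite union of finite intersections of clopen preimages, which is exactly the paper's argument. Your explicit treatment of nullary operations is a minor (harmless) addition the paper leaves implicit.
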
 

\begin{proof}
Let $f$ be an $n$-ary operation of $\alg{M}$ and let $\alpha_1, \dots, \alpha_n \in \alg{M}[\alg{L}]$. We need to show that $\alpha \colon \func{D}(\alg{L}) \to M$ defined by 
$\alpha(x) = f(\alpha_1(x),\dots, \alpha_n(x))$ is continuous and order-preserving. Order-preservation is easy, since if $x \leq y$ we know that $\alpha_i(x) \leq \alpha_i(y)$ for all $i$ and since $f$ is order-preserving we have 
\[
\alpha(x) = f(\alpha_1(x), \dots, \alpha_n(x)) \leq f(\alpha_1(y), \dots, \alpha_n(y)) \leq \alpha(y).
\]
To see that $\alpha$ is continuous, we show that $\alpha^{-1}(\{ m \})$ is clopen for every $m \in M$. Let $N \subseteq M^n$ be the finite set of tuples $(m_1,\dots, m_n)$ with $f(m_1,\dots, m_n) = m$. Then we have 
\[
\alpha^{-1}(\{ m \}) = \bigcup_{(m_1,\dots,m_n)\in N} \alpha_1^{-1}(\{ m_1\}) \cap \dots \cap \alpha_n^{-1}(\{ m_n \}),
\] 
which is clopen because $N$ is finite and all $\alpha_i$ are continuous. 
\end{proof} 

Therefore, it is easily seen that the following \emph{Priestly power functor} $\Pow\colon \DL \to \PMV_n$ is well-defined. For a distributive lattice $\alg{L} \in \DL$ let $\Pow(\alg{L}) = \PL_n[\alg{L}]$ be the Priestley power, and for a homomorphism $h\colon \alg{L}_1 \to \alg{L}_2$ let $\Pow h\colon \Pow(\alg{L}_1) \to \Pow(\alg{L}_2)$ be defined by $\alpha \mapsto \alpha \circ \func{D} h$. We now show by duality that this functor is right-adjoint to the distributive skeleton functor. 

\begin{theorem}\label{thm:PowerSkeletonAdjoint}
The Priestley power functor $\Pow \colon \DL \to \PMV_n$ is right-adjoint to the distributive skeleton functor $\Skel\colon \PMV_n \to \DL$.  
\end{theorem}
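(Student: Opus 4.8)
The plan is to establish the adjunction by producing a bijection
\[
\PMV_n(\alg{A}, \Pow(\alg{L})) \;\cong\; \DL(\Skel(\alg{A}), \alg{L})
\]
natural in $\alg{A} \in \PMV_n$ and $\alg{L} \in \DL$, carried out \emph{by duality}, i.e.\ by translating both sides into the category $\Priest$ of Priestley spaces. On the distributive-lattice side this is immediate: Priestley duality gives $\DL(\Skel(\alg{A}), \alg{L}) \cong \Priest(\func{D}(\alg{L}), \func{D}(\Skel(\alg{A})))$, and Theorem~\ref{thm:SkeletonDualForgetful} supplies a natural isomorphism $\func{D}\Skel \cong \U\func{D}_n$, so that
\[
\DL(\Skel(\alg{A}), \alg{L}) \;\cong\; \Priest(\func{D}(\alg{L}), \U\func{D}_n(\alg{A})).
\]
Thus the whole theorem reduces to a single natural bijection $\PMV_n(\alg{A}, \Pow(\alg{L})) \cong \Priest(\func{D}(\alg{L}), \U\func{D}_n(\alg{A}))$.

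The heart of the proof is therefore a transposition (currying) argument. Writing $Y = \func{D}(\alg{L})$ and recalling $\Pow(\alg{L}) = \Priest(Y, \U(\utilde{\PL}_n))$ with operations computed pointwise (Lemma~\ref{lem:PriestleyPowerIsAlgebra}), I would send a homomorphism $\psi \colon \alg{A} \to \PL_n[\alg{L}]$ to the map $\widehat{\psi}\colon Y \to \func{D}_n(\alg{A})$ defined by $\widehat{\psi}(y)(a) = \psi(a)(y)$. Here one checks three things: (i)~each $\widehat{\psi}(y)$ is a genuine homomorphism $\alg{A} \to \PL_n$, which holds because the operations on $\PL_n[\alg{L}]$ are pointwise, so $\psi(a \ast b)(y) = \psi(a)(y) \ast \psi(b)(y)$; (ii)~$\widehat{\psi}$ is order-preserving into $\U\func{D}_n(\alg{A})$ (whose order is the pointwise order on homomorphisms), since each $\psi(a)$ is order-preserving; and (iii)~$\widehat{\psi}$ is continuous, since $\widehat{\psi}^{-1}(\{u \mid u(a) = m\}) = \psi(a)^{-1}(\{m\})$ is clopen as $\psi(a)$ is continuous. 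The inverse transposition sends $g \colon Y \to \U\func{D}_n(\alg{A})$ to the homomorphism $a \mapsto (y \mapsto g(y)(a))$, and the same three observations, run in reverse, show this lands in $\Pow(\alg{L})$ and is inverse to $\psi \mapsto \widehat{\psi}$.

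Finally I would assemble the three isomorphisms and verify naturality in both variables. Naturality in $\alg{A}$ follows from the naturality of the transposition together with the naturality asserted in Theorem~\ref{thm:SkeletonDualForgetful}; naturality in $\alg{L}$ follows from functoriality of $\func{D}$ and the definition of $\Pow$ on morphisms ($\Pow h = (-)\circ \func{D}h$). I expect the main obstacle to be essentially bookkeeping: keeping the two distinct dualities (the natural duality given by $\func{D}_n, \func{E}_n$ and Priestley duality given by $\func{D}, \func{E}$) and the mediating forgetful functor $\U$ straight, and checking that the single transposition bijection is simultaneously compatible with all three structure maps, namely the order, the topology, and the algebra operations. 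Everything else is routine verification, the key inputs being Lemma~\ref{lem:PriestleyPowerIsAlgebra} (pointwise operations on the power) and the identification $\func{D}\Skel \cong \U\func{D}_n$ from Theorem~\ref{thm:SkeletonDualForgetful}.
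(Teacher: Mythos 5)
Your proposal is correct, but it takes a genuinely different route from the paper. The paper never writes down the hom-set bijection $\PMV_n(\alg{A}, \Pow(\alg{L})) \cong \DL(\Skel(\alg{A}), \alg{L})$ directly; instead it introduces an auxiliary functor $\func{P}\colon \Priest \to \var{X}_n$ which equips a Priestley space with the empty relation $\alg{R}^{\alg{X}} = \emptyset$ for every $\alg{R} \in \mathcal{S}_n{\setminus}\{\leq\}$ (a legitimate object of $\var{X}_n$ by Proposition~\ref{prop:AximoatizationDualPMVn}), observes that $\func{P} \dashv \U$ holds for essentially trivial reasons, shows that $\Pow$ is dual to $\func{P}$ via $\func{E}_n\func{P} \cong \Pow\func{E}$, and then concludes from Theorem~\ref{thm:SkeletonDualForgetful} and uniqueness of adjoints: transporting $\func{P} \dashv \U$ across the two dual equivalences flips it into $\Skel \dashv \Pow$. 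Your currying bijection $\PMV_n(\alg{A}, \Pow(\alg{L})) \cong \Priest(\func{D}(\alg{L}), \U\func{D}_n(\alg{A}))$ is in effect a by-hand verification of the composite of the two adjunctions the paper invokes abstractly (the dual adjunction $\PMV_n(\alg{A},\func{E}_n(\alg{X})) \cong \var{X}_n(\alg{X},\func{D}_n(\alg{A}))$ applied with $\alg{X} = \func{P}\func{D}(\alg{L})$, composed with $\func{P} \dashv \U$); your checks (i)--(iii) and the inverse transposition are exactly the content hidden in those appeals, and they go through as stated because the order and topology on $\func{D}_n(\alg{A})$ are computed pointwise, and $\U$ strips away precisely the relations that would otherwise obstruct the bijection. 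What each approach buys: yours is more elementary and self-contained, avoids uniqueness of adjoints, and yields the adjunction isomorphism (hence the unit $\alg{A} \to \Pow\Skel(\alg{A})$) in completely explicit form; the paper's detour through $\var{X}_n$ pays dividends later, since the functor $\func{P}$ and the resulting identification of duals of Priestley/Boolean powers as structures whose extra relations are empty are reused in Corollary~\ref{cor:EmbedPowerSkeleton} and are the key to the characterization of algebraically and existentially closed algebras in Theorem~\ref{thm:AlgebraicallyExistentiallyClosed}. Both arguments rest on Theorem~\ref{thm:SkeletonDualForgetful} in the same way.
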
 

\begin{proof}
Our proof strategy consists of the following two steps. We first define a functor $\func{P}\colon \Priest \to \var{X}_n$ and show that it is left-adjoint to the forgetful functor $\U_n$. Then we show that $\Pow$ is the dual of $\func{P}$. By Theorem~\ref{thm:SkeletonDualForgetful} and the uniqueness of an adjoint up to natural isomorphism, the theorem follows. 
\[
\xymatrix@C=100pt@R=50pt{ 
\var{X}_n \ar@/^/[r]^{\func{E}_n} 
\ar@{<-}@/_8pt/[d]_{\func{P}}^{\phantom{.}\dashv}
\ar@/^8pt/[d]^{\func{U}}
& 
\PMV_n \ar@/^/[l]^{\func{D}_n} 
\ar@{<-}@/_8pt/[d]_{\Pow}
\ar@/^8pt/[d]^{\Skel}_{\vdash\phantom{.}}
\\
\Priest \ar@/^/[r]^{\func{E}}
&
\DL \ar@/^/[l]^{\func{D}} 
}
\]
Let $\func{P}\colon \Priest \to \var{X}_n$ be defined as follows. For a Priestley space $\langle X, \leq \rangle$ , define $\func{P}(X,\leq)$ to be the structured topological space $\langle X, \leq, (\alg{R}^\alg{X} = \emptyset \mid \alg{R}\in \mathcal{S}_n{\setminus}\{ \leq \} ) \rangle$, which is a well-defined member of $\var{X}_n$ by Proposition~\ref{prop:AximoatizationDualPMVn}. Furthermore, define $\func{P}\varphi = \varphi$ on morphisms. It is easy to see that $\func{P}$ is left-adjoint to $\U$, since for every Priestley space $\langle X, \leq \rangle$ and structure $\alg{Y} \in \var{X}_n$ because, by definition of $\func{P}$, morphisms in $\var{X}_n(\func{P}(X,\leq), \alg{Y})$ 
clearly coincide with continuous order-preserving maps $X \to Y$, that is, morphisms in $\Priest(\langle X, \leq \rangle, \U(\func{Y}))$. 

We now show that $\func{P}$ is dual to $\Pow$, more specifically, we show that there is a natural isomorphism $\func{E}_n\func{P} \cong \Pow\func{E}$. For this, we simply note that, for a Priestley space $\langle X,\leq \rangle$, we have the following natural isomorphisms
\begin{align*}
\func{E}_n\func{P}(X,\leq) & = \var{X}_n(\func{P}(X,\leq), \utilde{\alg{PL}}_n) \cong \\ &\cong \Priest(\langle X,\leq \rangle, \func{U}(\utilde{\PL}_n)) \cong \\
& \cong \Priest(\func{DE}(X,\leq), \func{U}(\utilde{\PL}_n))
= \Pow\func{E}(X,\leq),
\end{align*}
where we used $\func{P} \dashv \U$ established above and the definition of the Priestley power $\Pow(\alg{L}) = \Priest(\func{D}(\alg{L}), \U(\utilde{\PL}_n))$. This finishes the proof.
\end{proof}

One simple consequence of (the proof of) Theorem\ref{thm:PowerSkeletonAdjoint} is the following. 

\begin{corollary}\label{cor:EmbedPowerSkeleton}
Every algebra $\alg{A} \in \PMV_n$ is a subalgebra of a Priestley power. More specifically, there is an embedding $\alg{A} \hookrightarrow \Pow\Skel(\alg{A})$. 
\end{corollary}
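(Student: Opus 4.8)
The embedding claimed in the statement is nothing but the unit $\eta_\alg{A}\colon \alg{A} \to \Pow\Skel(\alg{A})$ of the adjunction $\Skel \dashv \Pow$ from Theorem~\ref{thm:PowerSkeletonAdjoint}, so the plan is to show that this unit is always an embedding. Rather than arguing directly in $\PMV_n$, I would dualize: by Corollary~\ref{cor:ConsequencesOfNatDuality}(4), a homomorphism $h$ in $\PMV_n$ is an embedding if and only if its dual $\func{D}_n(h)$ is a surjection in $\var{X}_n$. Thus it suffices to prove that $\func{D}_n(\eta_\alg{A})$ is surjective for every $\alg{A}$.

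To identify this dual morphism, I would invoke the two natural isomorphisms already at hand: $\func{D}\Skel \cong \U\func{D}_n$ from Theorem~\ref{thm:SkeletonDualForgetful}, and $\func{D}_n\Pow \cong \func{P}\func{D}$, the dual form of the isomorphism $\func{E}_n\func{P} \cong \Pow\func{E}$ obtained in the proof of Theorem~\ref{thm:PowerSkeletonAdjoint}. Chaining these gives $\func{D}_n\Pow\Skel(\alg{A}) \cong \func{P}\U\func{D}_n(\alg{A})$, so that $\func{D}_n(\eta_\alg{A})$ becomes a morphism $\func{P}\U\func{D}_n(\alg{A}) \to \func{D}_n(\alg{A})$. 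The key categorical observation is that, under a dual equivalence, the unit of $\Skel \dashv \Pow$ transports to the counit of the dual adjunction $\func{P} \dashv \U$; concretely, $\func{D}_n(\eta_\alg{A})$ is, modulo the isomorphisms above, the counit $\varepsilon_{\func{D}_n(\alg{A})}$ of $\func{P}\dashv\U$.

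It then remains to check that this counit is a surjection, which is immediate from the explicit description of $\func{P}$ in the proof of Theorem~\ref{thm:PowerSkeletonAdjoint}: for $\alg{X}\in\var{X}_n$, the object $\func{P}\U(\alg{X})$ has the same underlying set, topology and order $\leq^\alg{X}$ as $\alg{X}$ but all other relations empty, and $\varepsilon_\alg{X}$ is the identity map on carriers (a legitimate $\var{X}_n$-morphism precisely because the relations on the source are empty). Being a bijection on carriers, it is in particular a surjection, whence $\eta_\alg{A}$ is an embedding. I expect the main obstacle to be the bookkeeping in the second step---verifying that $\func{D}_n(\eta_\alg{A})$ genuinely coincides with $\varepsilon_{\func{D}_n(\alg{A})}$ once all natural isomorphisms are composed---rather than the first or third step. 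As a sanity check (and an essentially self-contained alternative), one can unwind the isomorphisms to see that $\eta_\alg{A}$ is the evaluation $a \mapsto (u \mapsto u(a))$: composing the strong-duality isomorphism $e_\alg{A}\colon \alg{A} \xrightarrow{\cong} \func{E}_n\func{D}_n(\alg{A})$ with the inclusion of $\func{E}_n\func{D}_n(\alg{A})$ (maps preserving \emph{all} relations in $\mathcal{S}_n$) into $\Pow\Skel(\alg{A}) \cong \Priest(\U\func{D}_n(\alg{A}), \U(\utilde{\PL}_n))$ (maps preserving only $\leq$), both realized as subalgebras of the same power of $\PL_n$, yields the embedding directly.
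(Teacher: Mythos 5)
Your proposal is correct and follows essentially the same route as the paper: both dualize the unit of $\Skel \dashv \Pow$ to the counit of $\func{P} \dashv \U$, observe that this counit is the identity on carriers (hence surjective/epi), and transport this back across the dual equivalence to conclude that the unit is an embedding. Your version is slightly more explicit about the bookkeeping (invoking Corollary~\ref{cor:ConsequencesOfNatDuality}(4) and the natural isomorphisms $\func{D}\Skel \cong \U\func{D}_n$, $\func{D}_n\Pow \cong \func{P}\func{D}$), but the substance is identical.
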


\begin{proof}
Let $\func{P}$ be the dual of $\Pow$ as in the proof of Theorem~\ref{thm:PowerSkeletonAdjoint}. It is easy to see that the counit of the adjunction $\func{P} \dashv \func{U}$ is the identity map $id_x$ as a morphism $\func{P}\func{U}(\alg{X}) \to \alg{X}$ on every component. Therefore, it is a component-wise epimorphism in $\var{X}_n$. Dually, this implies that the unit of the adjunction $\Skel \dashv \Pow$ is a component-wise monomorphism, and therefore yields an embedding $\alg{A} \hookrightarrow \Pow\Skel(\alg{A})$ for every $\PMV_n$-algebra $\alg{A} \in \PMV_n$ as desired. 
\end{proof}

In the last subsection, we describe the algebraically and existentially closed members of $\PMV_n$ via their duals. For this, Boolean powers (rather than Priestley powers) play an essential role. However, since Boolean powers arise as special cases of Priestley powers, the results of this subsection will prove useful towards this end. 

\subsection{Algebraically and existentially closed algebras}\label{subsec:A-E-ClosedAlgebras}
A standard application of natural dualities is the classification of algebraically closed and existentially closed algebras via their duals (see, \emph{e.g.}, \cite[Sections 5.3 and 5.4]{ClarkDavey1998}). In this subsection, we give full classifications of the algebraically closed and existentially closed members of $\PMV_n$ via Boolean powers. Note that, for a complemented bounded distributive lattice $\alg{B}$, the Priestley power $\PL_n[\alg{B}]$ from Definition~\ref{def:PriestleyPower} coincides with the usual Boolean power $\PL_n[\alg{B}]$. Since the structure $\utilde{\PL}_n$ is total, we can use the AC-EC Theorem \cite[Theorem 5.3.5.]{ClarkDavey1998} to characterize algebraically and existentially closed members of $\PMV_n$. 

Before we state this theorem, we recall that $\alg{X} \in \var{X}_n$ has the \emph{dual finite homomorphism property} $(\text{FHP})^\ast$ if, for all finite $\alg{Y}, \alg{Z} \in \var{X}_n$ and surjective morphisms $\varphi\colon \alg{X} \to \alg{Z}$, $\psi\colon \alg{Y} \to \alg{Z}$, there exists a morphism $\lambda\colon \alg{X} \to \alg{Y}$ such that $\varphi = \psi \circ \lambda$. 
\begin{equation}
\tag*{$\text{(FHP})^\ast$}
\begin{tikzcd}
\alg{X} \arrow[r, twoheadrightarrow, "\varphi"] \arrow[dr, dashrightarrow, "\exists \lambda"']
& \alg{Z} \\
& \alg{Y} \arrow[u, twoheadrightarrow, "\psi"']
\end{tikzcd}
\end{equation}
The \emph{dual finite embedding property} $(\text{FEP})^\ast$ is the similar same, except that $\lambda$  is also required to be surjective.      
\begin{theorem}\label{thm:AC-EC-Thm}\cite{ClarkDavey1998}
Let $\alg{A} \in \PMV_n$. 
\begin{enumerate}
\item $\alg{A}$ is algebraically closed if and only if $\func{D}_n(\alg{A})$ has the dual finite homomorphism property $(\text{FHP})^\ast$. 
\item $\alg{A}$ is existentially closed if and only if $\func{D}_n(\alg{A})$ has the finite embedding property $(\text{FEP})^\ast$.
\end{enumerate}
\end{theorem}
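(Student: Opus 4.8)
The plan is to deduce both statements as a direct instance of the general AC-EC Theorem \cite[Theorem 5.3.5]{ClarkDavey1998}, whose hypotheses require a strong duality carried by a \emph{total} alter ego. Both hypotheses are already available in our setting: Theorem~\ref{thm:NaturalDualityPositiveMVn} shows that $\utilde{\PL}_n = \langle \{ 0, \tfrac{1}{n},\dots, \tfrac{n-1}{n}, 1\}, \mathcal{S}_n, \mathcal{T}_{\mathrm{dis}} \rangle$ yields a strong duality for $\PMV_n$, and this alter ego is total since $\mathcal{S}_n$ consists purely of binary algebraic \emph{relations} and contains no partial operations. Thus the essential content of the proof is simply to verify that these are exactly the conditions under which the cited theorem applies, after which (1) and (2) follow verbatim.

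Concretely, I would first recall the principle underlying the AC-EC Theorem. Under a strong duality the functors $\func{D}_n$ and $\func{E}_n$ interchange embeddings and surjections, which in our case is precisely Corollary~\ref{cor:ConsequencesOfNatDuality}(4), and they restrict to a dual equivalence between the finite members of $\PMV_n$ and the finite members of $\var{X}_n$. Being algebraically closed means that every finite diagram exhibiting $\alg{A}$ as a subalgebra of some $\alg{B}$, together with a map from a finite algebra $\alg{C}$ into $\alg{B}$, can be completed over $\alg{A}$; dualizing such a diagram turns the embedding $\alg{A}\hookrightarrow\alg{B}$ into a surjection $\func{D}_n(\alg{B})\twoheadrightarrow\func{D}_n(\alg{A})$ and converts the completion requirement into exactly the lifting property $(\text{FHP})^\ast$ for $\func{D}_n(\alg{A})$. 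The existentially closed case is identical except that the relevant witness is an embedding rather than an arbitrary homomorphism, which on the dual side forces the lift $\lambda$ to be surjective, yielding $(\text{FEP})^\ast$.

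The one point that genuinely needs care, and where the main obstacle would lie in a general situation, is the totality hypothesis. The general AC-EC Theorem is formulated for total structures because, when $\utilde{\alg{M}}$ carries partial operations, closed substructures and the behaviour of morphisms under $\func{D}_n, \func{E}_n$ become more delicate and the clean equivalence between (algebraic/existential) closedness and $(\text{FHP})^\ast / (\text{FEP})^\ast$ can break down. In our case this difficulty evaporates entirely, since by construction $\utilde{\PL}_n$ has no partial operations whatsoever. Consequently, once the strong duality of Theorem~\ref{thm:NaturalDualityPositiveMVn} together with the totality of $\utilde{\PL}_n$ are recorded, both characterizations follow immediately from \cite[Theorem 5.3.5]{ClarkDavey1998}, and no further computation is required.
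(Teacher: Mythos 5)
Your proposal is correct and matches the paper's own treatment: the paper likewise justifies this theorem purely by invoking the AC-EC Theorem \cite[Theorem 5.3.5]{ClarkDavey1998}, noting that Theorem~\ref{thm:NaturalDualityPositiveMVn} provides a strong duality and that $\utilde{\PL}_n$ is a total structure (its alter ego consists only of the relations in $\mathcal{S}_n$). Your additional sketch of the diagram-dualization intuition and the role of Corollary~\ref{cor:ConsequencesOfNatDuality}(4) is consistent exposition rather than a different route.
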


We now show that algebraically and existentially closed members of $\PMV_n$ stem from Boolean algebras in the following sense.

\begin{theorem}\label{thm:AlgebraicallyExistentiallyClosed}
Let $\alg{A} \in \PMV_n$. 
\begin{enumerate}
\item $\alg{A}$ is algebraically closed if and only if $\alg{A}$ is isomorphic to a Boolean power $\PL_n[\alg{B}]$, where $\alg{B} \in \BA$ is an arbitrary Boolean algebra.  
\item $\alg{A}$ is existentially closed if and only if $\alg{A}$ is isomorphic to a Boolean power $\PL_n[\alg{B}]$, where $\alg{B} \in \BA$ is an atomless Boolean algebra.
\end{enumerate}
\end{theorem}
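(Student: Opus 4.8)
The plan is to prove both statements by translating everything to the dual side via Theorem~\ref{thm:AC-EC-Thm}, and then recognizing the dual characterizations as exactly the Priestley-space properties that detect (atomless) Boolean algebras through the functor $\func{P}$ from the proof of Theorem~\ref{thm:PowerSkeletonAdjoint}. The key structural observation I would exploit is that the Boolean powers $\PL_n[\alg{B}]$ are precisely the images under $\Pow$ of \emph{Boolean} distributive lattices, and dually these correspond to objects $\alg{X} \in \var{X}_n$ of the form $\func{P}(X)$ where $\langle X, \leq\rangle$ is an \emph{antichain} Priestley space (i.e.\ $\leq$ is the identity), since a Boolean algebra $\alg{B}$ is dual to a Priestley space whose order is trivial. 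Concretely, $\PL_n[\alg{B}] \cong \func{E}_n\func{P}\func{D}(\alg{B})$ by the natural isomorphism $\func{E}_n\func{P} \cong \Pow\func{E}$ established in Theorem~\ref{thm:PowerSkeletonAdjoint}, so the algebras in question are exactly those whose dual is $\func{P}(X)$ for a Boolean (antichain) Priestley space $X$, and in case (2) one additionally wants $X$ to be the dual of an \emph{atomless} Boolean algebra.

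For statement (1), I would first show that if $\alg{A} \cong \PL_n[\alg{B}]$ then $\func{D}_n(\alg{A})$ has $(\text{FHP})^\ast$. The dual $\func{D}_n(\PL_n[\alg{B}])$ is $\func{P}(X)$ with $X = \func{D}(\alg{B})$ a Boolean space carrying the trivial order and every relation $\alg{R}^\alg{X} = \varnothing$ for $\alg{R}\neq{\leq}$. Given surjections $\varphi\colon \func{P}(X) \to \alg{Z}$ and $\psi\colon \alg{Y} \to \alg{Z}$ with $\alg{Y},\alg{Z}$ finite, I would build the lift $\lambda$ componentwise: since $X$ is a Boolean space and all nontrivial relations on $\func{P}(X)$ are empty, a morphism out of $\func{P}(X)$ is just a continuous map into the underlying set that trivially preserves all structure, so the lifting reduces to lifting a continuous map into a finite discrete space through a surjection of finite sets — which is always possible using that $X$ is Boolean (zero-dimensional compact Hausdorff) to choose clopen preimages. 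Conversely, if $\func{D}_n(\alg{A})$ has $(\text{FHP})^\ast$, I would argue that this forces all the nontrivial relations on $\func{D}_n(\alg{A})$ to be empty and the underlying order to be an antichain, by instantiating the lifting property against suitable small finite test objects $\alg{Y},\alg{Z}$ in $\var{X}_n$ that detect the presence of a strict $\leq$-pair or a nonempty proper relation $\alg{R}^\alg{X}$; the existence of a lift then contradicts the presence of such a pair. This places $\func{D}_n(\alg{A})$ in the image of $\func{P}$ on a Boolean space, whence $\alg{A}$ is a Boolean power by the adjunction and Corollary~\ref{cor:EmbedPowerSkeleton}.

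For statement (2), the dual condition upgrades $(\text{FHP})^\ast$ to $(\text{FEP})^\ast$, requiring the lift $\lambda$ to be surjective. Building on the analysis of case (1), the underlying space is already forced to be Boolean with trivial order, and the additional surjectivity requirement translates exactly into the condition that the Boolean space $X = \func{D}(\alg{B})$ has \emph{no isolated points}, equivalently that $\alg{B}$ is atomless: one direction shows that for an atomless $\alg{B}$ any clopen partition can be refined, which is precisely what is needed to make the componentwise lift surjective; the other direction shows that an isolated point (an atom of $\alg{B}$) obstructs surjectivity by a small finite test diagram where $\alg{Z}$ has a point not hit by any surjective lift.

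The main obstacle I anticipate is the converse directions, namely extracting from the purely diagrammatic lifting properties $(\text{FHP})^\ast$ and $(\text{FEP})^\ast$ the concrete geometric conclusions that the order is trivial, all proper relations vanish, and (for the second statement) the space is perfect. Designing the right finite test objects $\alg{Y},\alg{Z} \in \var{X}_n$ — which must themselves satisfy the axioms of Proposition~\ref{prop:AximoatizationDualPMVn} — and verifying that the lifting property genuinely forces these structural constraints is the delicate part; the forward directions and the identification of Boolean powers with $\func{P}$ of Boolean spaces are comparatively routine given the adjunction from Theorem~\ref{thm:PowerSkeletonAdjoint} and the injectivity description in Corollary~\ref{cor:ConsequencesOfNatDuality}(2).
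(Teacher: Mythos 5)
Your proposal follows essentially the same route as the paper: both translate the problem through Theorem~\ref{thm:AC-EC-Thm}, identify the duals of Boolean powers $\PL_n[\alg{B}]$ (via the proof of Theorem~\ref{thm:PowerSkeletonAdjoint}) as the structures with discrete order and all other relations empty, rule out non-discrete orders and nonempty relations by exhibiting small finite test diagrams against which $(\text{FHP})^\ast$ fails, and reduce the remaining sufficiency claims (including the atomless/no-isolated-points equivalence for $(\text{FEP})^\ast$) to the classical clopen-refinement argument, which the paper handles by citing the analogy with \cite[Theorem 5.4.1]{ClarkDavey1998}. The only difference is one of emphasis: you flag the necessity direction as the delicate part, whereas the paper carries out exactly those test-object constructions explicitly and treats the sufficiency direction as the routine step.
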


\begin{proof}
By (the proof of) Theorem~\ref{thm:PowerSkeletonAdjoint}, we know that the duals of Boolean powers $\PL_n[\alg{B}]$ in $\var{X}_n$ are exactly the structures isomorphic to some $\alg{X} \in \var{X}_n$ where $\leq^{\alg{X}}$ the discrete order and $\alg{R}^\alg{X}$ is empty for all other $\alg{R}\in \mathcal{S}_n$. We first show by contrapositive that if $\alg{X}$ has the finite homomorphism property, then it needs to be of this form.  

Let $\alg{X} \in \var{X}_n$ not be of the form described above. If $\leq$ is not discrete, there are distinct $x, y\in \alg{X}$ with $x < y$. Let $U$ be an upset of $X$ containing $y$ but not $x$. Define $\alg{Z} \in \var{X}_n$ to consist of two points $\{ a,b \}$ with order $a < b$ (and all other relations empty), and let $\alg{Y}$ consist of two points $\{ a', b' \}$ with the discrete order (and all other relations empty). Let $\varphi\colon \alg{X} \to \alg{Z}$ be the morphism sending $U$ to $b$ and $X{\setminus}U$ to $a$. Let $\psi\colon \alg{Y} \to \alg{Z}$ be the morphism sending $a'$ to $a$ and $b'$ to $b$. Now if there was a morphism $\lambda$ witnessing $(\text{FHP})^\ast$, it would have to satisfy $\lambda(x) = a'$ and $\lambda(y) = b'$. However, this is impossible since this would mean $x\leq y$ and $\lambda(x) \not\leq \lambda(y)$, contradicting that $\lambda$ needs to be order-preserving. 

Now assume that $\alg{X}$ has the discrete order-relation and there is some other relation $\alg{R}^\alg{X}$ which is non-empty. Choose $\alg{R}$ minimal in $\mathcal{S}_n$ such that there is some $x\in \alg{X}$ with $x\alg{R}^\alg{X}x$. Define $\alg{Z} \in \var{X}_n$ to consist of one point $\{ a \}$ with $a\alg{R}^\alg{X}a$ and let $\alg{Y}$ consists of one point $\{ a' \}$ with $a' \leq^\alg{Y} a'$ and all other $\alg{R}^\alg{Y}$ empty. Let $\varphi \colon \alg{X} \to \alg{Z}$ and $\psi\colon \alg{Y} \to \alg{Z}$ be the unique morphisms. The unique map $\lambda \colon X \to Z$ is not a morphism because otherwise $x \alg{R}^\alg{X} x$ would imply $a'\alg{R} a'$. Therefore $\alg{X}$ does not satisfy $(\text{FHP})^\ast$. 

Thus we showed that if $\alg{A}$ is algebraically closed, then it is isomorphic to some Boolean power $\PL_n[\alg{B}]$. For the converse of (1), one has to show that every $\alg{X} \in \var{X}_n$ with discrete order and all other relations empty has the finite homomorphism property. For (2), one has to show such an $\alg{X}$ has the finite embedding property if and only if $\leq$ has no isolated points. However, this is easy, since both of these can be proven completely analogous to \cite[Theorem 5.4.1]{ClarkDavey1998}.
\end{proof}

In particular, for $n = 1$ we recover the well-known description of algebraically closed and existentially closed distributive lattices \cite{Schmid1979} as complemented distributive lattices and atomless complemented distributive lattices.   

\section{Conclusion}\label{sec:conclusion}
We developed a logarithmic optimal natural duality for the variety $\PMV_n$ of positive $\MV_n$-algebras, generated by $\PL_n$, the negation-free reduct of the finite $\MV$-chain $\lucas_n$. We explored the relationship between this duality and Priestley duality, showing that there is an adjunction between $\DL$ and $\PMV_n$ given by the distributive skeleton functor $\Skel\colon \PMV_n \to \DL$ and the Priestley power functor $\Pow\colon \DL \to \PMV_n$. Specializing this relationship to Boolean powers, we gave a full characterization of algebraically and existentially closed members of $\PMV_n$. In the following, a few open questions, remarks and ideas for further research are collected. 
\begin{enumerate}
\item As noted in Remark~\ref{rem:OtherAlgebrasWithTaus}, the results of Subsection~\ref{subsec:NaturalDualitiesPosMV} up until Lemma~\ref{lem:minimalrelation}, as well as  Lemma~\ref{lem:subalgebrasSimple} and Corollary~\ref{cor:HSP=ISP} from Subsection~\ref{subsec:PositiveMVChains} hold not only for $\PL_n$, but for every lattice-based algebra $\alg{D}$ in which $\tau_d$ is term-definable for every $d\in\alg{D}$. Furthermore, the existence of these $\tau_d$ together with a `weak form of negation' is equivalent to semi-primality of a lattice-based algebra \cite[Proposition 2.8]{KurzPoigerTeheux2023}. Is there a sensible definition of `lattice-semi-primal' algebras (similar to lattice-primal algebras), such that the results of this paper be seen as a specific instance of a more general result about natural dualities for such lattice-semi-primal algebras? 
\item In Corollary~\ref{cor:EmbedPowerSkeleton}, we showed that every $\PMV_n$-algebra can be embedded into the Priestley power of its distributive skeleton, similarly to how every $\MV_n$-algebra can be embedded into the Boolean power of its Boolean skeleton. Based on this, a category equivalent to $\MV_n$ was described in \cite{DiNolaLettieri2000}. This equivalence was explained from the point of view of natural dualities in \cite{Niederkorn2001}. Is there a similar categorical equivalence for $\PMV_n$? To deal with this question, the more constructive description of Priestley powers from \cite{Lenkehegyi1986} could prove useful. 
\item In this paper, the logical aspects of $\PMV_n$-algebras were only hinted at. In future work, we plan to explore these aspects further. In particular, we aim to investigate modal extensions of $\PMV_n$-algebras to deal with an analogue of Dunn's positive modal logic \cite{Dunn1995} in the setting of modal finitely-valued \L ukasiewicz logic \cite{HansoulTeheux2013}.     
\end{enumerate}      
\appendix
\section{An algorithm to find the dualizing structure}\label{appendix}
Making use of Proposition~\ref{prop:subalgebrasAsUnionsOfRectangles}, here we provide an algorithm to find the lattice $\mathcal{S}_n$ from Definition~\ref{def:SetOfRelevantRelations}, in order to systematically determine the alter ego from Theorem~\ref{thm:NaturalDualityPositiveMVn}. 

By Proposition~\ref{prop:subalgebrasAsUnionsOfRectangles}, the subalgebras $\lhd \subseteq \alg{R} \subseteq \leq$ are certain `unions of rectangles' of the form 
\[
\bigcup_{i = 0}^n C_{(\frac{i}{n},y_i)}
\]
where $y_0 \leq \dots \leq y_n$ is an increasing sequence in $\PL_n$ with $\frac{i}{n} \leq y_i$. We will identify this union with the sequence $y_1, \dots, y_{n-1}$ (note that $y_0 = 0$ and $y_n = 1$ holds for every subalgebra, so we can omit these). For example, the sequence where all $y_i = 1$ corresponds to $\lhd$ and the sequence $y_i = \frac{i}{n}$ corresponds to $\leq$. However, not every such sequence corresponds to a subalgebra. By Proposition~\ref{prop:subalgebrasAsUnionsOfRectangles}, the sequences which correspond to subalgebras, which we call \emph{good} sequences, are the ones which satisfy
\[
(\tfrac{j}{n}, y_j) \odot (\tfrac{j'}{n}, y_{j'}) \in \bigcup_{i = 0}^n C_{(\frac{i}{n},y_i)} \text{ and } (\tfrac{j}{n}, y_j) \oplus (\tfrac{j'}{n}, y_{j'}) \in \bigcup_{i = 0}^n C_{(\frac{i}{n},y_i)}
\]
for all $j,j' \in \{ 1,\dots, n-1\}$. In fact, similar to the proof of Lemma~\ref{lem:SubdirectProductsSuffice}, it can be shown that it is sufficient to check this condition only for the `corner elements', that is, at indices $i$ which satisfy $y_i < y_{i+1}$ (with $y_n = 1$).           

Thus, the set $\mathcal{S}_n$ is in bijective correspondence with the set of good sequences. Furthermore, the union associated to $y_1,\dots, y_{n-1}$ is contained in the union associated to $y_1',\dots,y_{n-1}'$ if and only if $y_i' \leq y_i$ holds for $i = 1,\dots, n-1$. Thus we can also retrieve the lattice structure of $\mathcal{S}_n$. Altogether, we proved that the following algorithm yields the lattice $\mathcal{S}_n$. 
 
\textbf{Step 1.} Generate the set $Y$ of all sequences $[y_1,\dots,y_n]$ of elements of $\PL_n$ with $y_1 \leq, \dots, \leq y_{n-1}$ and $\frac{i}{n} \leq y_i$ for all $i = 1,\dots, n-1$. 

\textbf{Step 2.} Start with $G = \emptyset$ and do the following for every sequence $[y_1,\dots, y_{n-1}] \in Y$. Let $J \subseteq \{ 1,\dots, n-1\}$ be the collection of all indices $i$ with $y_i < y_{i+1}$ (where $y_n := 1$). Check whether  
\[
(\tfrac{j}{n}, y_j) \odot (\tfrac{j'}{n}, y_{j'}) \in \bigcup_{i = 0}^n C_{(\frac{i}{n},y_i)} \text{ and } (\tfrac{j}{n}, y_j) \oplus (\tfrac{j'}{n}, y_{j'}) \in \bigcup_{i = 0}^n C_{(\frac{i}{n},y_i)}
\]
holds for all $j,j' \in J$. Add $[y_1,\dots, y_{n-1}]$ to $G$ if and only if this holds.

\textbf{Step 3.} Order the set $G$ obtained after completing Step 2 by 
\[
[y_1,\dots, y_{n-1}] \leq [y_1',\dots,y'_{n-1}] \Leftrightarrow y'_i \leq y_i \text{ for } i = 1,\dots, n-1.
\]
This results in $G \cong \mathcal{S}_n$ (note that the order on $G$ is simply the component-wise converse order $\geq$).

As an example, we compute the set $\mathcal{S}_4$ using the above algorithm. 

\textbf{Step 1.} The set $Y$ consists of the following $14$ sequences: 
\begin{align*}
[1,1,1], &&  
[\tfrac{3}{4},1,1], && 
[\tfrac{3}{4},\tfrac{3}{4},1], && 
[\tfrac{3}{4}, \tfrac{3}{4}, \tfrac{3}{4}], && 
[\tfrac{2}{4},1,1], && 
[\tfrac{2}{4},\tfrac{3}{4},1], &&
[\tfrac{2}{4}, \tfrac{3}{4}, \tfrac{3}{4}], \\
[\tfrac{2}{4},\tfrac{2}{4},1], &&  
[\tfrac{2}{4}, \tfrac{2}{4}, \tfrac{3}{4}], && 
[\tfrac{1}{4},1,1], && 
[\tfrac{1}{4},\tfrac{3}{4},1], && 
[\tfrac{1}{4}, \tfrac{3}{4}, \tfrac{3}{4}], && 
[\tfrac{1}{4}, \tfrac{2}{4}, 1], &&
[\tfrac{1}{4}, \tfrac{2}{4}, \tfrac{3}{4}].
\end{align*}
 
\textbf{Step 2.} 
The set $G$ of good sequences consists of the following $7$ sequences:
\begin{align*}
[1,1,1], &&  
[\tfrac{3}{4},1,1], && 
[\tfrac{3}{4},\tfrac{3}{4},1], && 
[\tfrac{2}{4},1,1], && 
[\tfrac{2}{4},\tfrac{3}{4},1], && 
[\tfrac{2}{4},\tfrac{2}{4},1], &&
[\tfrac{1}{4}, \tfrac{2}{4}, \tfrac{3}{4}].
\end{align*}
The other sequences are not good for the following reasons.
\begin{itemize}
\item The sequences $[\tfrac{3}{4}, \tfrac{3}{4}, \tfrac{3}{4}]$ and $[\tfrac{2}{4}, \tfrac{3}{4}, \tfrac{3}{4}]$ are not good because $(\frac{3}{4}, \frac{3}{4}) \odot (\frac{3}{4}, \frac{3}{4}) = (\frac{2}{4}, \frac{2}{4}) \notin C_{(\frac{2}{4},\frac{2}{4})}$.
\item The sequence $[\frac{2}{4},\frac{2}{4},\frac{3}{4}]$ is not good because $(\frac{2}{4}, \frac{2}{4}) \odot (\frac{3}{4}, \frac{3}{4}) = (\frac{1}{4}, \frac{1}{4}) \notin C_{(\frac{1}{4},\frac{2}{4})}$.
\item The sequences $[\tfrac{1}{4},1,1]$, $[\tfrac{1}{4},\tfrac{3}{4},1]$ and $[\tfrac{1}{4}, \tfrac{3}{4}, \tfrac{3}{4}]$ are not good because $(\frac{1}{4}, \frac{1}{4}) \oplus (\frac{1}{4}, \frac{1}{4}) = (\frac{2}{4}, \frac{2}{4}) \notin C_{(\frac{2}{4},\frac{3}{4})}$. 
\item The sequence $[\tfrac{2}{4},\tfrac{2}{4},1]$ is not good because $(\frac{1}{4}, \frac{1}{4}) \oplus (\frac{2}{4}, \frac{2}{4}) = (\frac{3}{4}, \frac{3}{4}) \notin C_{(\frac{3}{4},1}$.
\end{itemize} 

\textbf{Step 3.} The lattice $\mathcal{S}_4$ looks as follows: 
\[
\begin{tikzpicture}
  \node (0) at (0,0) {$[1,1,1]$};
  \node (1) at (0,1) {$[\tfrac{3}{4},1,1]$};
  \node (2) at (-1,2){$[\tfrac{2}{4},1,1]$};
  \node (3) at (1,2) {$[\tfrac{3}{4},\tfrac{3}{4},1]$};
  \node (4) at (0,3) {$[\tfrac{2}{4},\tfrac{3}{4},1]$};
  \node (5) at (0,4) {$[\tfrac{2}{4},\tfrac{2}{4},1]$};
  \node (6) at (0,5) {$[\tfrac{1}{4}, \tfrac{2}{4}, \tfrac{3}{4}]$};
  \draw (0) -- (1) -- (2) -- (4) -- (5) -- (6);
  \draw (1) -- (3) -- (4); 
\end{tikzpicture}
\]
To obtain an optimal strong duality for $\PMV_4$, we only need to consider the meet-irreductible elements of this lattice, \emph{i.e.}, we remove the relation corresponding to the sequence $[\tfrac{3}{4},1,1]$.   

\section*{Acknowledgment}
  \noindent The author is supported by the Luxembourg National Research Fund under the project  PRIDE17/12246620/GPS.

\end{document}